\documentclass[a4paper,10pt]{article}
\usepackage{amsfonts}
\usepackage{graphicx,amssymb,amsmath,amscd}
\usepackage[english]{babel}
\usepackage[all]{xy}
\usepackage{latexsym}
\usepackage{amsmath,amsthm}
\usepackage{stmaryrd}
\usepackage{enumitem}
\usepackage{etoolbox}

\usepackage{hyperref}%
\hypersetup{
urlcolor = red,colorlinks = true,
linkcolor = blue,
citecolor = blue,
linktocpage = true,
pdftitle = {Flows and Homotopies of Banach Lie algebroids},
pdfauthor = {Pelletier, Schmeding},
bookmarksopen = true,
bookmarksopenlevel = 1,
unicode = true,
hypertexnames =true
}%
\usepackage{cleveref}
\usepackage{xcolor}
\usepackage{tikz-cd}

\newcommand{\field}[1]{\mathbb{#1}}
\newcommand{\R}{\field{R}}

\newcommand{\N}{\field{N}}
\newcommand{\M}{\field{M}}

\newcommand{\A}{\field{A}}
\newcommand{\E}{\field{E}}

\DeclareMathOperator{\Fl}{Fl}
\newtheorem{theo}{Theorem}[section]

\newtheorem{lem}[theo]{Lemma}
\newtheorem{cor}[theo]{Corollary}
\newtheorem{prop}[theo]{Proposition}
\newtheorem{nota}[theo]{Notations}

\newtheorem{hyp}[theo]{Assumption}

\theoremstyle{definition}
 \newtheorem{setup}[theo]{}
 \newtheorem{defi}[theo]{Definition}
\newtheorem{rem}[theo]{Remark}

\def\dis{\ds}
\def\to{\rightarrow}

\def\g{\gamma}
\def\G{\Gamma}

\def\t{\tau}

\def\f{\flat}

\def\r{\rho}

\def\s{\sigma}

\def\p{\partial}

\def\~{\tilde}
\def\dis{\displaystyle}

\newcommand{\LB}[1][\cdot \hspace{1pt} , \cdot]{[\hspace{1pt} #1 \hspace{1pt} ]}
\newcommand{\DP}[1][\cdot \hspace{1pt} , \cdot]{\left\langle\!\left\langle #1 \right\rangle\!\right\rangle}
\newcommand{\LXu}[1][u]{\mathcal{X}_{\mathfrak{#1}}}
\newcommand{\FlXu}[1][u]{\mathfrak{F}_{\mathfrak{#1}}^{t,s}}
\title{Flows and homotopies of Banach Lie algebroids}
 \author{F. Pelletier\footnote
{in memoriam, formerly Universit\'e de Savoie Mont Blanc}, A. Schmeding\footnote{NTNU Trondheim, Alfred Getz'vei 1, Trondheim, \href{mailto:alexander.schmeding@ntnu.no}{alexander.schmeding@ntnu.no}}}

\date{}

\begin{document}

\maketitle

\begin{abstract}
We establish flow and homotopy tools for Banach Lie algebroids that do not rely on compactly supported extensions. Under suitable hypotheses, we construct parameter-dependent local extensions of sections along algebroid paths, complete lifts and their fibrewise-linear evolutions, and a Bochner-integral variation-of-constants formula. We then prove that an admissible-path homotopy is equivalently a Lie algebroid morphism from the parameter square and a solution of the associated transport equation. This provides the extension, transport and infinitesimal variation theory required for a local integration program of Banach Lie algebroids to be developed in a sequel.
 \end{abstract}

\textbf{Keywords:} Banach Lie algebroid, admissible path, homotopy of algebroid paths, complete lift, linear vector field, time-dependent section\\

\textbf{MSC2020:} 22A22 (Primary); 58H05, 53B05, 34G20, (Secondary)

\tableofcontents

\section{Introduction} 
Lie algebroids were introduced in the 1960's, \cite{Pra66,Pra67,Pra68}, as infinitesemial counterparts to a Lie groupoid. Analogous to the classical theory for Lie groups and Lie algebras in finite dimensions it was erroneously claimed that \emph{any Lie algebroid is isomorphic to the Lie algebroid of a Lie groupoid}. In \cite{AlMo85} a counterexample to this assertion was presented. However, there exists a wide example set  of finite dimensional Lie algebroids which can be integrated by a Lie groupoid (see for instance \cite{Daz97,DoLa66, Nis00}).  Infinite-dimensional Lie algebroids were introduced independently in \cite{Ana11} and \cite{Pel12} for manifolds modelled on Banach spaces. This setting  was used in  a series of papers with relation to operator algebraic structures (cf. for instance \cite{Bar10,OSJ14,OJS15,OdSl16,BeOd20}). The reader will also find a basic approach of Banach Lie algebroids and Banach Lie groupoids with some applications in \cite{BGJP19} and \cite{AaGaS20}.

A complete solution to the integrability problem for finite dimensional Lie algebroids was established in \cite{CrFe11}. There a local topological obstruction for a local Lie groupoid integrating the Lie algebroid was constructed. While Lie algebroids do not generally integrate to Lie groupoids, any finite dimensional Lie algebroid is \emph{locally integrable}, in the sense of admitting a local Lie groupoid integrating it. Recently a new proof of this result was produced in \cite{CMS20}. The corresponding arguments rely on extensions of sections, infinitesimal flows, and variation formulas for two-parameter families of admissible paths. The purpose of the present paper is to establish the infinitesimal flow and variation theory required for local integration in the Banach setting. We do not construct an integrating local Banach Lie groupoid here. Instead, we develop the extension, complete-lift, transport, and homotopy results needed to adapt the local integration methods of \cite{CMS20} to Banach Lie algebroids. In a subsequent work, these results will be used to construct a realization form associated with an algebroid spray and to study the resulting local Banach Lie groupoid.

The challenge is, that even standard finite-dimensional arguments do not carry over directly. A general Banach space need not admit smooth bump functions, and a local section of a Banach bundle need not be the restriction of a globally defined section. Moreover, modules of local sections of infinite-rank bundles are not locally finitely generated. A second difficulty concerns extensions of sections along curves. Finite-dimensional proofs frequently use compactly supported extensions of sections or vector fields. Such extensions are unavailable on a general Banach manifold. We instead use linear connections, local additions, and parallel transport to construct extensions for time-dependent algebroid sections. Our results subsume the following.\smallskip

\textbf{Theorem A} (Extension of parameter-dependent sections) \emph{Let $A\to M$ be a Banach vector bundle equipped with suitable linear connections. Every $C^k$-family of sections along a $C^k$-family of curves 
$\gamma \colon [0,1]^2 \rightarrow M$
admits, locally a $C^{k,\infty}$-extension\footnote{the regularity for the extension uses the notion of $C^{r,s}$-mappings from \cite{AaS15,GaS22}. We recall this notion in the beginning of \Cref{timedependenhomotopy}.} by time-dependent local sections. If $M$ is smoothly paracompact, these extensions can be chosen simultaneously over a compact parameter interval.}\smallskip
 
This extension result replaces the use of compactly supported sections in finite-dimensional integration arguments. Its parameter-dependent form is essential for studying variations of admissible paths and for obtaining evolution equations on common domains. The second ingredient is the theory of complete lifts and their fibrewise-linear evolutions. This is motivated  by the central role of linear vector fields in finite-dimensional Lie groupoid theory. There linear fields provide the infinitesimal description of vector bundle automorphisms and derivations. Moreover, under differentiation, multiplicative vector fields on Lie groupoids induce linear vector fields on the associated Lie algebroids, making them intermediary between global and infinitesimal geometric structures, cf. e.g. \cite{Mac05,MaX98}. We consider these objects and their flows in a Banach context. These then satisfy the following.\smallskip 

\textbf{Theorem B} (Complete lifts and transport) \emph{ Let $(A,\rho, \LB_A)$ be a Banach Lie algebroid and let $\mathfrak{u}$ be a local section of $A$. Then $\mathfrak{u}$ admits a unique complete lift $\mathcal{X}_\mathfrak{u}$, whose local flow consists of fibrewise-linear bundle isomorphisms covering the flow of $\rho(\mathfrak{u})$. The induced pullback satisfies 
\[ \left. \frac{d}{dt} \right|_{t=0} (\mathfrak{F}^t_\mathfrak{u})^*v = \LB[\mathfrak{u},v]_A \]
for every local section $v$. For a time-dependent local section $\mathfrak{u}_t$, the associated push-forward transport is the solution operator for }
\[ \frac{\partial}{\partial t}Y_t=\LB[Y_t,\mathfrak{u}_t]_A. \]

This provides the natural framework for variations and homotopies of admissible algebroid paths. In finite dimensions, $A$-paths and $A$-homotopies form the basis of the Weinstein groupoid construction in \cite{CF03}. Closely related infinitesimal variation formulas enter the local Lie groupoid integrating finite-dimensional Lie algebroids approach from \cite{CMS20}. Our main result establishes the corresponding homotopy and variation formula in the Banach setting.
\smallskip

\textbf{Theorem C} (Characterisation of homotopies) \emph{
Let $A\to M$ be a Banach Lie algebroid satisfying the connection hypotheses in Theorem A, and assume that $M$ is smoothly paracompact. Let $a,b\colon [0,1]^2\to A$ be $C^k$-maps with $k\geq2$ satifying the assumptions of \Cref{P_existH}, covering the base map $\gamma$. Then the following assertions are equivalent:
\begin{enumerate} 
\item \((a,b)\) is an $A$-homotopy,
\item the map \[ f_{a,b}:T([0,1]^2)\to A, \qquad f_{a,b}(\lambda\partial_t+\mu\partial_\varepsilon) = \lambda a+\mu b, \] 
is a Lie algebroid morphism over $\gamma$,
\item for one, and hence every, time-dependent local extension $u_\varepsilon$ of $a_\varepsilon$, 
\[ b_\varepsilon(t) = \int_0^t \mathfrak{F}_{\mathfrak{u}_\varepsilon,\gamma_\varepsilon(s)}^{t,s} \left( \partial_\varepsilon \mathfrak{u}_\varepsilon (s,\gamma_\varepsilon(s)) \right)\,ds. \] 
\end{enumerate}}

The significance of the third characterization is that it expresses the transverse component of a homotopy entirely through the evolution of time-dependent complete lifts.
Theorems A and B provide the analytic and infinitesimal machinery for Theorem C. 

Our approach differs from a formal infinite-dimensional translation of the finite-dimensional proofs. The absence of compactly supported extensions is handled by connection-based extensions on neighbourhoods of parameterized graphs. The infinite rank of the bundles requires a formulation in terms of local sections rather than locally finitely generated modules. Finally, the transport equations are treated through operator-norm evolution and fibrewise Bochner integration, so no topology on the full space of local sections is required.
The present paper should therefore be viewed as the infinitesimal and analytic part of a two-step local integration program. Here we establish the extension, flow, transport, and homotopy machinery. In a subsequent work, we use this machinery to construct a realization form from a Banach Lie algebroid spray and to carry out the corresponding local-integration construction in the spirit of \cite{CMS20,CMS22}. Questions concerning global integrability and its possible obstructions are not addressed in the present paper.

\paragraph{Organisation of the article} \Cref{sect:fundamentals} collects the necessary material on linear connections, local additions, Banach Lie algebroids, and their morphisms. \Cref{timedependenhomotopy} constructs time-dependent extensions and complete lifts, develops their transport theory, and proves the equivalent characterizations of homotopy. \Cref{app:smooth_para} contains some topological concepts relevant for infinite-dimensional manifolds. \Cref{app:proof} collects several technical proofs. \Cref{app:dualfield} records the optional dual and partial-Poisson description of the complete lift.

\paragraph{Acknowledgements} This work began in 2025 after Professor Fernand Pelletier approached me with the proposal to collaborate on a set of ideas he had to establish (local) integrability for Banach Lie algebroids. The basic structure and ideas developed in this work are all due to Professor Pelletier. It is unfortunate that he could not see the completion of his ideas. 

\paragraph{Tool and computational resource disclosure:} In preparing the manuscript standard LLM models (ChatGPT 5.6) were used to review the manuscript for typographical and mathematical errors. The authors remain solely responsible for the mathematical content of the work.

\section{Connections and Banach Lie algebroids}\label{sect:fundamentals}
In this section we define basic concepts and recall some definitions. If nothing else is said, the Lie algebroids we consider (as well as all manifolds) are modelled on possibly infinite-dimensional Banach spaces. A basic reference for this setting is \cite{Lang01}.

\begin{setup}[Convention and notation]
If nothing else is said, manifolds are assumed to be smooth and modelled on Banach spaces. However, we do not enforce topological properties such as paracompactness by default. 

We write $\mathfrak{X}(M)$ for the (smooth) vector fields on a manifold and $\Gamma(E)$ for the smooth sections of a vector bundle $\pi \colon E \rightarrow M$.

For topological vector spaces $E,F$ we let $L(E,F)$ be the space of continuous linear operators. If $E,F$ are Banach spaces, we shall endow $L(E,F)$ with the operator norm topology.
\end{setup}

\subsection{Local coordinates in a Banach bundle}\label{loctriv}
Consider a Banach bundle $\pi\colon E\to M$ with typical fiber $\E$ over a Banach manifold  modelled on $\M$. For any open set $U$ in $M$, we denote simply by $E_{U}$  the restriction 
of $E$   over $U$ and  by $\G(E)$ (resp.  $\G(E_{U})$) the space of global  sections (resp.  sections defined on  $U$). If $\mathcal{F}(U)$ is the ring of smooth functions on  $U$ then $\G(E_{U})$ is an $\mathcal{F}(U)$-module. \\

Fix $x\in M$  and consider any chart $(U,\phi)$  around  $x$  such that  $E_{U}$ is isomorphic to the trivial bundle $U\times\E$. Modulo the identification of $U\subset M$ and $\phi(U)\subset
\mathbb{M}$ 
 we will write $E_{U}\equiv U\times \E$. Then we have
\[ TU = TM_{U}\equiv U\times \M \qquad TE_{U}\equiv U\times\E\times\M\times \E.\]
 Taking into account these equivalences, we get the following coordinates:
\begin{align*}(x,y) \text{ on }TM_{U}\equiv U\times \M, \quad\mathfrak{u}\equiv(x,\mathsf{u}) \text{ on }E_{U}\equiv U\times \E \\ (\mathfrak{u},\dot{ \mathfrak{u}})\equiv (x,\mathsf{u},y,v) \text{ on } TE_{U}\equiv U\times\E\times\M\times \E.\end{align*}



\subsection{Connections on a Banach bundle}
\label{banachconnection}
Consider a Banach bundle $\pi\colon E\to M$. 
The kernel of $T\pi\colon TE\longrightarrow TM$ is denoted by $VE$ and is called the
\textit{vertical bundle} over $E$. It appears as a vector bundle over $E$. According to our convention we also write $VE_{U}\equiv(U\times\mathbb{E})\times\mathbb{E}$.

The vertical lift gives a canonical vector-bundle isomorphism of $VE$ with the pull-back:%
\[%
\begin{array}
[c]{ccc}%
E\times_{M}E\simeq\pi^{\ast}E & \overset{\widetilde{\pi}}{\longrightarrow} & E\\
\downarrow &  & \downarrow\pi\\
E & \overset{\pi}{\longrightarrow} & M
\end{array}
\]
where $\pi^{\ast}E$ is the pull-back of $\pi\colon E\to M$ over $\pi$.
More precisely, a canonical isomorphism $E\times_{M}E\rightarrow VE$ called the
\textit{vertical lift} $\mathrm{vl}_{E}$ defined by%
\[
\mathrm{vl}_{E}\left(  x,u,v\right)  =\overset{.}{\gamma}\left(  0\right)
\]
where $\gamma(t)=u+tv$. This map is fiber linear over $M$. Finally, we note that by identifying $M$ with the zero-section in $E$, tangent vectors to $E$ at $M$ split into vertical tangent vectors and vectors tangent to $M$, cf. \cite{Vil67}. 
\begin{defi}
\label{D_Connection} Let $J\colon VE\rightarrow TE$ be the canonical inclusion, then a \emph{(non linear) connection} on $E$ is a bundle morphism
$V\colon TE\rightarrow VE$ such that $V\circ J=\operatorname{id}_{VE}$.
\end{defi}

The datum of a connection $V$ on $E$ is equivalent to the existence of a
decomposition $TE=HE\oplus VE$ for the Banach bundle $E$ with $HE=\ker V$.
We then have the following diagram:%
\[%
\begin{array}
[c]{ccc}%
VE & \overset{V}{\longleftarrow} & TE\\
\mathrm{vl}_{E}\uparrow &  & \downarrow K\\
\pi^{\ast}E & \overset{\widehat{\pi}}{\longrightarrow} & E
\end{array}
\]
\begin{defi}
The smooth bundle morphism $K:=\hat{\pi}\circ \mathrm{vl}_{E}^{-1}\circ V\colon TE\rightarrow E$ over $\pi$ is called the \textit{connection map} or \textit{connector} of $V$. 
If moreover, $K$ is linear on each fiber, the connection $V$ is called a \textit{linear connection}.
\end{defi}
In each fiber $T_{(x,u)}E$, the kernel of $K$ is exactly the subspace $H_{\left(  x,u\right)  }E$ of $HE$ in $T_{(x,u)}E$. Therefore, the datum $K$ is equivalent to the datum $V$.

\begin{defi}
The \emph{horizontal bundle} associated to the connection $V$ with connector $K$ is $HE:=\ker K$. By construction $T_e\pi\colon  H_eE\to T_{\pi(e)}M$ is a linear isomorphism. Denote its inverse by $\operatorname{Hor}^E_e\colon T_{\pi(e)}M\to H_eE$. 
The map 
\[ \operatorname{Hor}^E\colon E\times_M TM \to TE, \qquad (e,X)\mapsto \operatorname{Hor}^E_e(X), \]
is called the \emph{horizontal lift} associated to the connection.
\end{defi}
The horizontal bundle $HE$ of a linear connection  on $E$ is always tangent to the zero section of $p_E\colon TE\to E$.
By \cite[Lemma 1]{Vil67} there exists a smooth map $\omega \colon U\times\mathbb{E} \rightarrow L(\mathbb{M},\mathbb{E})$, i.e. $\omega\left(  x,u\right)  \in L\left(  T_{x}M,T_{\left(  x,u\right)}E\right)$ for the connection $V$ such that
\begin{align*}
V(x,u,y,v)    =(x,u,0,v+\omega(x,u).y)\qquad
K(x,u,y,v)    =(x,v+\omega(x,u).y)
\end{align*}
where the lower dot is application of a linear map. The connection $V$ is linear if and only if $\omega$ is linear in the second variable. Write $L^2(\mathbb{E},\mathbb{M},\mathbb{E})$ for the space of continuous bilinear maps. As $\mathcal{C}_x(u,y)=\omega(x,u).y$ we then obtain a smooth 
\[\mathcal{C}\colon U \rightarrow L^{2}(\mathbb{E},\mathbb{M};\mathbb{E}),\quad x\mapsto \mathcal{C}_x
\]
called \textit{local Christoffel components} of the connection $V$, cf. \cite{Lang01}.

\begin{rem}
\label{R_ExistsConnection} \normalfont It is classical that if $M$ is smoothly regular\footnote{i.e. the manifold is regular as a topological space and admits smooth bump functions, cf. \Cref{app:smooth_para}.}  and  paracompact, then there always exists a connection on any Banach bundle $\pi\colon E\to M$. However, these
assumptions impose the same assumptions on the Banach space $\mathbb{M}$ and so this condition is somewhat restrictive.
\newline
On the other hand, it is well known that there exist linear connections on a Banach manifold without such assumptions.
For instance, the well known situations include the trivial connection if the tangent bundle is trivial $TM\equiv M\times\mathbb{M}$ is trivial, or linear connections on loop spaces, \cite{CrFe11}), Levi-Civita connections \cite{Sch23}.

If there exists a  (linear) connection on a  Banach bundle $\pi\colon E\to M$ then for any smooth map $f\colon N\to M$ then we have an unique induced (linear) connection on the pull-back $f^*E\to N$  of $E\to M$ (cf. \cite[Section 37.28]{KrMi97}). In particular, if there exists a linear connection on $TM$, then we have the same property for any submanifold of $M$. Unfortunately, we have no example of a Banach bundle without a (linear) connection.
\end{rem}

Given  a linear connection $V$  on a Banach bundle $\pi\colon E\to M$, it naturally induces a Koszul connection, also called covariant derivative (cf. \cite{Lang01}) $$\nabla\colon \mathfrak{X}(M)\times\G(E)\rightarrow\G(E).$$ 
Since any (linear) connection induces naturally a (linear) connection on the restriction $E_{U}$ of $E$ to any open set $U$ of $M$, we also obtain a covariant derivative
$\nabla^U\colon \mathfrak{X}(U)\times\G(E_{U})\rightarrow\G(E_{U})$ with the correspondent properties for any function $f$ on $U$, $X\in\mathfrak{X}(U)$ and $\mathfrak{u}\in\G(E_{U})$.\\

\begin{hyp}\label{hyp:Koszul_1_jet} 
Throughout the paper, a connection means a linear bundle connection given by a smooth connector. When dealing with a Koszul connection (or covariant derivativ) we will always assume it comes from a linear connection. 
\end{hyp}
We use the above hypothesis to avoid extension questions for globally defined Koszul connections. (cf. \cite{CaPe12,BGJP19,BCP21},  \cite[VIII.2]{Lang01}): 
If  $\nabla$ were only defined on {\bf global} section of $E$  and in general  any local section of $E$  (resp. local vector field) on $M$ can not always extended to a global section of $E$ (resp. global vector field) on $M$, the  operator $\nabla $ can not  always induces a (local) operator $\nabla^U$ as previously. 
  This implies in particular\footnote{We assume that the reader is familiar with the language of jet-bundles. An overview for Banach manifolds can be found in \cite[Chapter 9]{MRaOD92}, whereas \cite[Chapter 41]{KrMi97} contains a discussion for even more general spaces.} that

\begin{prop}\label{localdef} For any covariant derivative $\nabla$ over a Banach bundle $\pi\colon E\to M$, for any $x\in M$,  the value $\nabla_X\mathfrak{u}(x)$ depends only of the value of $X$ at $x$ and the $1$-jet of $\mathfrak{u}$ at $x$.
\end{prop}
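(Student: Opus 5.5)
Since, by \Cref{hyp:Koszul_1_jet}, $\nabla$ is induced by a linear connection with smooth connector $K$, I will not argue abstractly with $\mathcal{F}(M)$-linearity and bump functions, which may not exist on $M$. Instead I will write $\nabla$ explicitly through the connector and read off the dependence directly. The formula is
\[
\nabla_X\mathfrak{u}=K\circ T\mathfrak{u}\circ X,
\]
with the connector $K\colon TE\to E$ from the definition above. This is the standard definition of the covariant derivative induced by $V$ (cf.\ \cite{Lang01}). It makes sense for local sections $\mathfrak{u}\in\G(E_U)$ and local vector fields $X\in\mathfrak{X}(U)$ alike, and it is compatible with restriction to open subsets, so $\nabla^U$ is given by the same expression.

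Evaluating at a point $x$ gives
\[
\nabla_X\mathfrak{u}(x)=K\bigl(T_x\mathfrak{u}(X(x))\bigr).
\]
The right-hand side involves $X$ only through the vector $X(x)\in T_xM$. It involves $\mathfrak{u}$ only through the tangent map $T_x\mathfrak{u}\colon T_xM\to T_{\mathfrak{u}(x)}E$, and this tangent map, together with its base point $\mathfrak{u}(x)$, is exactly the data of the $1$-jet $j^1_x\mathfrak{u}$. Hence the value depends only on $X(x)$ and $j^1_x\mathfrak{u}$, which is the claim.

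To make this concrete, and to see that the jet identification holds, I will redo the computation in a bundle chart $E_U\equiv U\times\E$ as in \Cref{loctriv}. Write $\mathfrak{u}(x)=(x,\mathsf{u}(x))$ and $X(x)=(x,y(x))$. Then
\[
T_x\mathfrak{u}(X(x))=\bigl(x,\mathsf{u}(x),y(x),d\mathsf{u}(x).y(x)\bigr),
\]
and the local form of $K$ recalled above yields
\[
\nabla_X\mathfrak{u}(x)=\Bigl(x,\ d\mathsf{u}(x).y(x)+\mathcal{C}_x\bigl(\mathsf{u}(x),y(x)\bigr)\Bigr).
\]
This expression visibly involves only $y(x)$, $\mathsf{u}(x)$ and $d\mathsf{u}(x)$. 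It is also linear in $y(x)$, so it depends on $X$ only through its value at $x$.

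The only point needing care is that whatever definition of $\nabla$ the paper adopts agrees with $K\circ T\mathfrak{u}\circ X$. This agreement is precisely what \Cref{hyp:Koszul_1_jet} provides. Once it is granted, the result is immediate and no smooth bump functions are needed.
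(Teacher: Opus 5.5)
Your proposal is correct and takes essentially the paper's route: the paper gives no separate proof, and simply reads the proposition off \Cref{hyp:Koszul_1_jet}, i.e.\ from $\nabla_X\mathfrak{u}=K\circ T\mathfrak{u}\circ X$ with local form $d\mathfrak{u}(X)+\mathcal{C}(\mathfrak{u},X)$, which visibly involves only $X(x)$ and the $1$-jet of $\mathfrak{u}$ at $x$. Your chart computation spells out this step, which the paper leaves implicit.
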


Given a Koszul connection $\nabla$ on $A$ and a $C^1$-curve
$\g\colon [0,1]\rightarrow M$, as in the finite dimensional case, we can associate a Koszul connection on the pull-back bundle $\g^*A$, see e.g. \cite[Section 4.3]{Sch23} for an overview. Further, one can define the notion of parallelism and a parallel transport along curves using Koszul connections, cf.\ \cite[VIII.3]{Lang01}, \cite[p. 5]{KS25}.

In a local trivialization $E_{U}\equiv U\times\mathbb{E}$, identify a local section $\mathfrak{u}$ with a map $\mathfrak{u} \colon U\rightarrow\mathbb{E}$. Then $\nabla$ has the local expression:
\[
\nabla_{X}\mathfrak{u}=d\mathfrak{u}(X)+\mathcal{C}(\mathfrak{u},X)
\]
 where $\mathcal{C}\colon U \rightarrow L^{2}(\mathbb{E},\mathbb{M}
;\mathbb{E}), x\mapsto \mathcal{C}_x$ is the \emph{local Christoffel component} of $\nabla$.

 \begin{rem} \label{nablaH} Let $\nabla $ be a Koszul connection on $E$ (induced by a linear connection). The fiber $(HE)_{(x,u)}$ of the associated horizontal bundle $HE$ is 
\begin{align*}
(HE)_{(x,u)}&=\{ T\mathfrak{u}(X)\;|\; \mathfrak{u} \in \Gamma(E),  \mathfrak{u}(x)=u ,\; X \in T_xM,  \nabla_X\mathfrak{u}=0\} \\ &\equiv\{(y,v)\in T_{(x,u)}E\;\;|\; v+\mathcal{C}_x(u,y)=0\}\end{align*}

\begin{enumerate}
\item  If $\varphi\in HE_{(x,u)}$ for any section $  \mathfrak{u}$ such that $ \mathfrak{u}(x)=u$ since $T\pi \circ T\mathfrak{u}=\mathrm{id}_{TM}$ it follows that $T\pi \circ T\mathfrak{u}(\varphi)=\varphi$. Therefore if  $X=T\pi(\varphi)$,
we have $\nabla_X\mathfrak{u}=0$.
\item For local sections $\mathfrak{u}$ and $\mathfrak{v}$ defined in an $x$-neighbourhood, we have a canonical vertical lift $\{\nabla_{\rho(\mathfrak{v})}\mathfrak{u}\}^V(x,u)$ in $T_{(x,u)}E$ of $\nabla_{\rho(\mathfrak{v})}\mathfrak{u}$. Then we get 
$$\mathrm{Hor}_x^E(\rho(\mathfrak{v}))=T_x\mathfrak{u}(\rho(\mathfrak{v}))-\{\nabla_{\rho(\mathfrak{v})}\mathfrak{u}\}^V(x,u)$$
\item Finally, if $E_{U}\equiv U\times\mathbb{E}$ and $E_{U^{\prime}}\equiv
U^{\prime}\times\mathbb{E}$ are local trivializations such that $U\cap
U^{\prime}\not =\emptyset$, then we have a smooth map $g\colon U\cap U^{\prime
}\rightarrow \mathrm{GL}(\mathbb{E})$ such that $\mathfrak{u}^\prime|_{U^{\prime}}=g. \mathfrak{u}|_{U}$ for
any section defined on $U\cap U^{\prime}$. Therefore the Christoffel component
$\mathcal{C}$ and $\mathcal{C}^{\prime}$ of $\nabla$ on $U\cap U^{\prime}$ are linked by
the relation
\[
\mathcal{C}^{\prime}(\mathfrak{u}^\prime,X)=-dg(X;g^{-1}.\mathfrak{u}^\prime)+g\mathcal{C}(g^{-1}\mathfrak{u}^\prime,X).
\]
\end{enumerate}
\end{rem}
The following splitting is standard, see \cite{Vil67}. Its proof is the obvious modification  of the one in \cite[X, Section 4, Theorem 4.3]{Lang01}.
\begin{lem}[Tensorial splitting]\label{lem:tensor_splitting}
Let $K\colon TE\rightarrow E$ be the connector of a linear connection on $E\to M$. Then the map 
\[ \mathcal{D}_E\colon TE\to\pi^*TM\oplus\pi^*E, \qquad \mathcal D_E(\xi_e) = \bigl( e, T_e\pi(\xi_e), K(\xi_e) \bigr) \]
is a bundle isomorphism over $E$ with inverse $\mathcal{D}_E^{-1}(e,X,a) = \operatorname{Hor}^E_e(X) + \operatorname{vl}_e(a)$.
\end{lem}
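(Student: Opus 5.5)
The plan is to show that $\mathcal{D}_E$ is a smooth bundle morphism over $E$, that the proposed inverse is smooth, and that the two are mutually inverse fibrewise. Everything can be checked in a local trivialization $E_U\equiv U\times\mathbb{E}$, using the formula $K(x,u,y,v)=(x,v+\mathcal{C}_x(u,y))$ for the connector of a linear connection. Since both maps are defined intrinsically, it suffices to verify smoothness and bijectivity chartwise. Smoothness of $\mathcal{D}_E$ is immediate: $T\pi$ and $K$ are smooth bundle maps, and $\pi^*TM\oplus\pi^*E$ is a Banach bundle over $E$ with local model $(U\times\mathbb{E})\times(\mathbb{M}\times\mathbb{E})$.

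In these coordinates a tangent vector is $\xi=(x,u,y,v)$. Then $T\pi(\xi)=(x,y)$, so
\[ \mathcal{D}_E(x,u,y,v)=\bigl((x,u),\,y,\,v+\mathcal{C}_x(u,y)\bigr). \]
This is continuous and linear in $(y,v)$ for fixed $(x,u)$, so $\mathcal{D}_E$ is fibrewise linear.

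Next I would write the proposed inverse in coordinates. By \Cref{nablaH}, the horizontal lift is $\operatorname{Hor}^E_{(x,u)}(y)=(x,u,y,-\mathcal{C}_x(u,y))$, because $T\pi$ maps the horizontal space isomorphically onto $T_xM$ and the horizontal space is $\{v+\mathcal{C}_x(u,y)=0\}$. The vertical lift is $\operatorname{vl}_{(x,u)}(a)=(x,u,0,a)$. Hence
\[ \mathcal{D}_E^{-1}(x,u,y,a)=\bigl(x,u,y,a-\mathcal{C}_x(u,y)\bigr), \]
which is smooth because $\mathcal{C}$ is smooth and bilinear.

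Composing the two formulas in either order gives the identity: $(y,v)\mapsto(y,v+\mathcal{C}_x(u,y))\mapsto(y,v)$, and $(y,a)\mapsto(y,a-\mathcal{C}_x(u,y))\mapsto(y,a)$. So $\mathcal{D}_E$ is a smooth fibrewise-linear bijection with smooth inverse, that is, a bundle isomorphism over $E$.

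Intrinsically, this expresses the decomposition $TE=HE\oplus VE$ from \Cref{D_Connection} together with two facts. First, $K\circ\operatorname{vl}_e=\mathrm{id}$, from the definition $K=\hat\pi\circ\operatorname{vl}_E^{-1}\circ V$ with $V\circ J=\mathrm{id}$. Second, $K\circ\operatorname{Hor}^E_e=0$ and $T\pi\circ\operatorname{vl}_e=0$.

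The only point that needs care is independence of the chosen trivialization. I would handle it by noting that $\mathcal{D}_E$ and the proposed inverse are defined globally and intrinsically. Consequently the local verification already establishes the global statement, and the transformation rule for $\mathcal{C}$ in \Cref{nablaH}(3) is not needed. I expect no real obstacle here. The mildest subtlety is that the topological direct-sum structure on the Banach fibres is automatic, since the inverse is given by explicit continuous formulas.
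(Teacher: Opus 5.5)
Your argument is correct, but it works in a local trivialization, whereas the paper's proof is coordinate-free.

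The paper only uses the identities $K\circ\operatorname{vl}_e=\mathrm{id}$, $K\circ\operatorname{Hor}^E_e=0$, $T_e\pi\circ\operatorname{Hor}^E_e=\mathrm{id}$ and $T_e\pi\circ\operatorname{vl}_e=0$. These give $\mathcal{D}_E\bigl(\operatorname{Hor}^E_e(X)+\operatorname{vl}_e(a)\bigr)=(e,X,a)$. For the other composition it observes that $\xi_e-\operatorname{vl}_e(K(\xi_e))$ lies in $H_eE=\ker K$ and projects to $T_e\pi(\xi_e)$, so it equals $\operatorname{Hor}^E_e(T_e\pi(\xi_e))$. That route needs neither the local form $K(x,u,y,v)=(x,v+\mathcal{C}_x(u,y))$ nor any comment on trivializations. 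It also makes clear that only the complementarity $TE=HE\oplus VE$ is used.

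Your chart computation instead reduces both maps to the explicit formulas $(y,v)\mapsto(y,v+\mathcal{C}_x(u,y))$ and $(y,a)\mapsto(y,a-\mathcal{C}_x(u,y))$. Its advantage is that it proves smoothness of $\operatorname{Hor}^E$ and of $\mathcal{D}_E^{-1}$ outright, a point the intrinsic argument leaves implicit.

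In passing, your formulas do not need bilinearity of $\mathcal{C}_x$; smoothness of $(x,u)\mapsto\omega(x,u)\in L(\mathbb{M},\mathbb{E})$ suffices. So, as in the paper's argument, linearity of the connection plays no role in this lemma.
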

\begin{proof}
Since $K\circ\operatorname{vl}_e = \operatorname{id}_{E_{\pi(e)}}$, and $T_e\pi|_{H_eE}\colon H_eE\to T_{\pi(e)}M$ is an isomorphism,
\[ T_e\pi(\operatorname{Hor}^E_e(X))=X, \quad K(\operatorname{Hor}^E_e(X))=0, \text{ and }  T_e\pi(\operatorname{vl}_e(a))=0, \quad K(\operatorname{vl}_e(a))=a. \] 
Hence  $\mathcal D_E \bigl( \operatorname{Hor}^E_e(X) + \operatorname{vl}_e(a) \bigr) = (e,X,a)$. 
Conversely, for $\xi_e\in T_eE$, $\xi_e-\operatorname{vl}_e(K(\xi_e)) \in H_eE$ and it projects to $T_e\pi(\xi_e)$. Therefore 
\[ \operatorname{Hor}^E_e(T_e\pi(\xi_e)) = \xi_e-\operatorname{vl}_e(K(\xi_e)). \] 
It follows that $\mathcal D_E^{-1}\mathcal D_E(\xi_e) = \xi_e$. Thus \(\mathcal D_E\) is a vector-bundle isomorphism.
\end{proof}
\subsection{Anchored bundles and connections}
Let us first recall the notion of an anchored bundle.
\begin{defi}
\label{D_AnchoredBanachBundle}${}$ Let $\pi \colon A\to M$ be a Banach bundle with typical fiber $\A$.
A morphism of vector bundles $\rho\colon A\rightarrow TM$ is
called an \textit{anchor} and    $\left(A,\pi,M,\rho\right)$ is  called a \emph{Banach
anchored bundle}.
\end{defi}

Given an anchor  $\rho\colon A\rightarrow TM$, for any open $U$ in $M$, this morphism induces a map, again denoted by $\r \colon  \G(A_{U})\rightarrow\G(TM_{U}) $ defined  for any $x\in M$ and any section $\mathfrak{u}$ of $A$ by: $\r(\mathfrak{u})(x)=\r\circ \mathfrak{u}(x)$.

\begin{setup}
From now   $E$ will denote any vector bundle without any additional structure, a priori,  (such that an anchor for instance),  while $A$ will denote a Banach bundle provided with at least  an anchor  $\rho$.
\end{setup}

\begin{nota}\label{loctho}\normalfont
In a local trivialization (see \Cref{loctriv}), we have:
\begin{eqnarray}\label{loctrivr}
\r(x,a)\equiv (x,a) \mapsto (x,r_x(a))
\end{eqnarray}
where $ x\mapsto r_x$ is a smooth map from  $U$ to $ L(\A,\M)$.
\end{nota}
We are now interested in several special types of curves.
\begin{defi}\label{D_admissible}
A curve $c\colon I\subset\mathbb{R}\rightarrow A$ of class $C^k$ ($k\geq 1$) is called \emph{admissible} if the
tangent vector $\dot{\gamma}(t)$ of $\gamma=\pi\circ c \colon U \rightarrow M$ is precisely
$\rho(c(t))$.
\end{defi}

\begin{defi} If $c\colon I\to A$ is an admissible curve and  $\g=\pi\circ c$ then $c$ a section along $\gamma$ and $c$ is called a \emph{geodesic} of $\nabla$ if $\nabla
_{\dot{\gamma}}c=\nabla^{\g}c\equiv0$. 
\end{defi}

In a local trivialization $A_{U}\equiv U\times\mathbb{A}$, an admissible curve
$c\colon I\rightarrow A_{U}$ is a geodesic of $\nabla$ if and only if, in local coordinates, $ c \equiv(x,u)$ is
a solution of the following system of differential equations:%
\[
\dot{x}=r_x(u), \quad
\dot{u}=-\mathcal{C}_x(u,\dot{x})
\]
where $\mathcal{C}$ is the local Christoffel component of $\nabla$ on $A_{U}$.
Using the usual solution theory for ordinary differential equations in Banach manifolds \cite{Lang01}, one can prove that a Koszul connection induces an exponential map 
$$\mathrm{Exp}^\nabla \colon \mathcal{U} \subseteq A \rightarrow M,\quad  \mathrm{Exp}^\nabla (u) =\Fl_\nabla^1 (u),$$
where $\Fl_\nabla^1(u)$ is the time one-evolution map for the geodesic with initial value $u$. It is defined on an open neighborhood $\mathcal{U}$ of the zero section. In general, the exponential is not a local-diffeomorphism.

For a connection on $TM$, the corresponding exponential map restricts, after shrinking its domain, to a fibre-wise local diffeomorphism onto an open set around the footpoint of the fibre. To see this, set $G(x,u)=-\dfrac{1}{2}\mathcal{C}_x(u,\rho(x)(u))$. We get a
vector field $S_{U}$ on $A_{U}$ which is homogeneous of degree $2$ in $u$ and one can prove that it defines a spray on $A_{U}$. Now, according to the compatibility conditions between the local Christoffel components \Cref{nablaH} 4., we obtain 
\begin{lem}\label{lem:Spray_from_Koszul}
Under \Cref{hyp:Koszul_1_jet}, every linear connection on $TM$ for a Banach manifold $M$ induces a unique global spray $\mathbf{S}$ associated to $\nabla$ such that $\mathrm{Exp}^\nabla = \exp^{\mathbf{S}}$ is the exponential map associated to the spray.
\end{lem}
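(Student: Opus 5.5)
We will build the spray from the connection and check that its exponential map agrees with $\mathrm{Exp}^\nabla$. In a chart $TM_U\equiv U\times\M$, the connection has local Christoffel component $\mathcal{C}\colon U\to L^2(\M,\M;\M)$. Take the anchor to be the identity, so $r_x=\mathrm{id}$. On $TU\equiv U\times\M$ define the local second-order vector field
\[ S_U(x,u)=\bigl(x,u,u,-\mathcal{C}_x(u,u)\bigr). \]
This is the symmetrised form $2G(x,u)$ with $G(x,u)=-\tfrac12\mathcal{C}_x(u,u)$. Three properties are immediate from the formula. First, $S_U$ is smooth, since $\mathcal{C}$ is smooth by \Cref{hyp:Koszul_1_jet}. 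Second, it is second order, because $T\pi_{TM}\circ S_U=\mathrm{id}$ on the base component. Third, it is homogeneous of degree two, since $S_U(x,su)=s\,(T m_s)(S_U(x,u))$ with $m_s$ the fibrewise scalar multiplication. So $S_U$ is a spray on $TU$.

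The key step is that the local sprays glue to a global vector field $\mathbf{S}$ on $TM$. Let $U\cap U'\neq\emptyset$ with chart change $\phi$, and set $g=d\phi$. We push $S_U$ forward by the induced chart change $(x,u)\mapsto(\phi(x),g_x u)$ on $TM$ and compare with $S_{U'}$. The tangent map gives the second component $dg_x(u;u)+g_x(-\mathcal{C}_x(u,u))$. By the transformation rule in \Cref{nablaH} 3, applied with $X=u$ and $\mathfrak{u}'=g u$, this equals $-\mathcal{C}'_{\phi(x)}(gu,gu)$.

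This is the point to treat carefully. The rule in \Cref{nablaH} has a sign and uses the derivative of the gluing cocycle $g$. For $TM$ this derivative is $d^2\phi$, which is symmetric. The quadratic term is therefore exactly what the second derivative of the chart change produces, and the transformation law of $\mathcal{C}$ is precisely the compatibility needed. Note also that only the symmetric part $\mathcal{C}_x(u,u)$ enters, so torsion plays no role. Hence the local sprays define a unique smooth global spray $\mathbf{S}$. Uniqueness holds because $\mathbf{S}$ is determined chartwise by $\mathcal{C}$.

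Finally, the integral curves of $\mathbf{S}$ project to curves $x(t)$ satisfying
\[ \dot x=u,\qquad \dot u=-\mathcal{C}_x(u,\dot x). \]
This is exactly the local geodesic system stated above, with $\rho=\mathrm{id}$. By uniqueness of solutions of ODEs on Banach manifolds \cite{Lang01}, the flow $\Fl^{\mathbf{S}}_1$ coincides with the geodesic time-one map on the common open domain about the zero section. Hence $\mathrm{Exp}^\nabla=\pi\circ\Fl^{\mathbf{S}}_1=\exp^{\mathbf{S}}$. The main obstacle is the cocycle verification in the gluing step. Everything else is formal once that identity is checked.
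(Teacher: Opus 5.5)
Your proposal is correct and takes the same route as the paper. The paper only sketches this lemma: the local spray with quadratic part $2G(x,u)=-\mathcal{C}_x(u,u)$, globalised via the transformation law of the Christoffel components, with integral curves the $\nabla$-geodesics; you supply the gluing computation it omits, though for $TM$ the rule of \Cref{nablaH} must also transform the base-vector slot, i.e.\ $\mathcal{C}'_{\phi(x)}(gu,gX)=g\,\mathcal{C}_x(u,X)-d^2\phi(x)(X,u)$, which is what your identity with $X=u$ actually uses.
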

Conversely, in the case of $A=TM$ (cf. \cite{Vil67}), given a spray $\mathbf{S}$ on $A$, we can construct a
unique connection $\nabla$ whose associated spray is $\mathbf{S}$ (e.g. \cite[Proposition 4.23]{Sch23}).

\begin{lem} \label{lem:con_total_space} Let $\pi\colon E\to M$ be a Banach vector bundle. If $E\to M$ and $p_M \colon TM\to M$ admit linear connections, then the tangent bundle $p_E\colon TE\to E$ admits a linear connection. 
\end{lem}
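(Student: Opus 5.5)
The approach is to pull back the given connections to the total space of $E$ and assemble them through the tensorial splitting of \Cref{lem:tensor_splitting}. Fix a linear connection on $E$ with connector $K$, and a linear connection on $TM$; by \Cref{hyp:Koszul_1_jet} the latter has a smooth connector and induces a Koszul connection $\nabla^{TM}$. \Cref{lem:tensor_splitting} gives a vector bundle isomorphism over $E$,
\[ \mathcal D_E\colon TE \to \pi^*TM\oplus\pi^*E. \]
So it suffices to produce a linear connection on $\pi^*TM\oplus\pi^*E\to E$ and transport it to $TE$ through $\mathcal D_E$. A linear connection is carried along a vector bundle isomorphism $\Phi$ over the identity: the connector becomes $\Phi^{-1}\circ K'\circ T\Phi$, which is smooth and fibrewise linear.

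Next we use \Cref{R_ExistsConnection}: a linear connection on a bundle pulls back along any smooth map to a linear connection on the pull-back bundle. Applying this to $\pi\colon E\to M$ gives linear connections on $\pi^*TM$ (from the one on $TM$) and on $\pi^*E$ (from the one on $E$). In local trivializations $E_U\equiv U\times\mathbb E$, the pulled-back Christoffel components at $(x,u)$ are $\mathcal C^{TM}_x(\cdot,T\pi(\cdot))$ and $\mathcal C^E_x(\cdot,T\pi(\cdot))$. These depend smoothly on $(x,u)$ and are bilinear, so the pull-back connections are linear with smooth connectors.

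The direct sum of two linear connections is again linear: take the connector $K_1\oplus K_2$ on $T(F_1\oplus F_2)\cong TF_1\times_{TN}TF_2$. Equivalently, in a trivialization the Christoffel component is the block-diagonal map $\mathrm{diag}(\mathcal C_1,\mathcal C_2)$. We then verify two things: the transformation rule of \Cref{nablaH}(3) holds blockwise, so the local pieces glue globally, and the connector is smooth and linear on fibres.

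The only delicate step is to check that transporting the connector via $\mathcal D_E$ stays smooth. This requires $\mathcal D_E$ to be a smooth bundle isomorphism, which follows from the smoothness of $K$ and $T\pi$. We also need $T\mathcal D_E$ to respect the vertical and horizontal structure so that $V\circ J=\mathrm{id}$. Both follow from the general transport-of-structure remark, because $\mathcal D_E$ is a linear isomorphism on fibres over the identity of $E$. No partitions of unity or paracompactness enter the argument, since every construction is canonical.
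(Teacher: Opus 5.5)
Your proposal is correct and follows essentially the same route as the paper: pull back the connections on $TM$ and $E$ to $\pi^*TM$ and $\pi^*E$ over $E$, take the Whitney sum to get a connector $K^{\oplus}$, and transport it to $TE$ through the tensorial splitting as $\mathcal{D}_E^{-1}\circ K^{\oplus}\circ T\mathcal{D}_E$. The only difference is presentational: you describe the pull-back connections via their local Christoffel components (and the pull-back remark), whereas the paper writes down the pulled-back horizontal lift $\operatorname{Hor}^{\pi^*F}_{(e,f)}(\xi_e)=\bigl(\xi_e,\operatorname{Hor}^F_f(T_e\pi(\xi_e))\bigr)$ directly.
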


\begin{proof} 
Let $\mathcal{D}_E\colon TE\to\pi^{*}TM\oplus\pi^{*}E$ be the tensorial splitting associated to the connector $K$, cf. \Cref{lem:tensor_splitting}.
We first construct linear connections on the pullback bundles $\pi^{*}TM\to E$ and $\pi^{*}E\to E$. For a vector bundle $F$ over $M$, let $\operatorname{Hor}^{F}$ denote the horizontal lift associated to the given connection on $q \colon F\to M$. 
For $(e,f)\in\pi^{*}F$ and a tangent vector $\xi_e\in T_eE$, define  $\operatorname{Hor}^{\pi^{*}F}_{(e,f)}(\xi_e) := \Bigl( \xi_e, \operatorname{Hor}^{F}_{f} \bigl( T_e\pi(\xi_e) \bigr) \Bigr)$. 
Since $T_f q \Bigl( \operatorname{Hor}^{F}_{f} (T_e\pi(\xi_e)) \Bigr) = T_e\pi(\xi_e)$, this vector is tangent to the pullback bundle \(\pi^{*}F\). 
The resulting horizontal distribution depends smoothly on $(e,f)$, Since $\operatorname{Hor}^F_f$ depends linearly on the base vector, the resulting horizontal distribution on $\pi^*F$ is fibrewise linear and  complementary to the vertical bundle. Hence it defines a linear connection. We conclude that $\pi^{*}TM\to E$ and $\pi^{*}E\to E$ carry linear connections. Taking the Whitney sum of these horizontal distributions yields a linear connection on 
$ \pi^{*}TM\oplus\pi^{*}E \longrightarrow E$. 
Let \[ K^{\oplus}\colon T(\pi^{*}TM\oplus\pi^{*}E) \to \pi^{*}TM\oplus\pi^{*}E \]
be its connector. Transport it through the vector-bundle isomorphism $\mathcal{D}_E$ to define 
$K^{TE} := \mathcal{D}_E^{-1} \circ K^{\oplus} \circ T\mathcal{D}_E$. 
Since \(K^{\oplus}\) is a linear connector and $\mathcal{D}_E$ is a vector-bundle isomorphism, $K^{TE}$ is the connector of a linear connection on $p_E\colon TE\to E$.
\end{proof}

Local additions are standard tools in the construction of manifolds of mappings. See e.g. \cite{KrMi97,KS25,Sch23}.

\begin{defi}\label{localadd}${}$
Let $N$ be a Banach manifold which we identify with the zero section in $p_N\colon TN\to N$. A local addition on $N$ is a smooth map $\Sigma\colon \mathcal{U}\subset TN\to N$  where $\mathcal{U}$ is an open neighbourhood of $N\subset TN$ such that:
\begin{itemize}
\item[1.] the map $\theta = (p_N, \Sigma) \colon  TN \to N\times N$ is a diffeomorphism from  $\mathcal{U}$ onto an open neighbourhood $\mathcal{V}$ of the diagonal;
\item[2.]  $\Sigma(x,0)=x$ for all $x\in N$.
\end{itemize}
\end{defi}
The following statement is based on an application of  \cite[Lemma 3.15]{KS25} which makes it necessary to require paracompactness. Unfortunately, we do not know a way around this requirement. 

\begin{prop}\label{adapcon} Let $\pi\colon E\to M$ a Banach bundle and $M$ be paracompact. We assume that there exists a linear connection on $E$ and on $TM$. Then a local addition  $\Sigma\colon \mathcal{U}\subset TE\to E$ on $E$ is obtained from the spray of the connection.
\end{prop}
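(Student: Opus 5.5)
The plan is to build the local addition on $E$ as the exponential map of a spray on the total space $E$, and then invoke \cite[Lemma 3.15]{KS25}. First, \Cref{lem:con_total_space} gives a linear connection on $p_E\colon TE\to E$ from the given linear connections on $E$ and on $TM$. Its connector is $K^{TE}=\mathcal{D}_E^{-1}\circ K^{\oplus}\circ T\mathcal{D}_E$, so it is smooth and globally defined; in particular \Cref{hyp:Koszul_1_jet} holds for the induced Koszul connection on $TE$. Next, \Cref{lem:Spray_from_Koszul} applied to the Banach manifold $N=E$ produces a unique global spray $\mathbf{S}$ on $TE$ with $\mathrm{Exp}^\nabla=\exp^{\mathbf{S}}$. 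The flow of $\mathbf{S}$ is defined on an open neighbourhood of the zero section, so we obtain a smooth map
\[ \exp^{\mathbf S}\colon \mathcal{U}_0\subset TE\to E, \qquad \exp^{\mathbf S}(0_e)=e, \]
on an open neighbourhood $\mathcal{U}_0$ of the zero section.

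Second, we check the local structure of $\theta=(p_E,\exp^{\mathbf S})$ near the zero section. Take a zero vector $0_e$ and use the splitting of $T_{0_e}(TE)$ into a part tangent to the zero section and a vertical part. With respect to this splitting, $T_{0_e}\theta$ has the block form
\[ \begin{pmatrix}\operatorname{id} & 0\\ \operatorname{id} & \operatorname{id}\end{pmatrix}, \]
because the geodesic with initial velocity $tv$ is $s\mapsto c_v(ts)$, so $\frac{d}{dt}\big|_{t=0}\exp^{\mathbf S}(tv)=v$. This matrix is a topological linear isomorphism. By the inverse function theorem on Banach manifolds, $\theta$ is therefore a local diffeomorphism at each point $0_e$. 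This is exactly the remark made before \Cref{lem:Spray_from_Koszul}: for a connection on a tangent bundle, the exponential is a fibrewise local diffeomorphism after shrinking.

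The main obstacle is the globalisation step. We must shrink $\mathcal{U}_0$ to an open neighbourhood $\mathcal{U}$ of the zero section on which $\theta$ is injective, so that it is a diffeomorphism onto an open neighbourhood $\mathcal{V}$ of the diagonal in $E\times E$. Local invertibility around each $0_e$ does not give this by itself, and this is where paracompactness is needed and where we would cite \cite[Lemma 3.15]{KS25}. That lemma states that a smooth map from a neighbourhood of the zero section, which restricts to the identity on the zero section and is a local diffeomorphism there, can be restricted to a diffeomorphism onto a neighbourhood of the diagonal, provided the base is paracompact. We therefore need $E$ to be paracompact. This follows because $M$ is paracompact and metrisable, being a paracompact Hausdorff Banach manifold, and $E$ is locally $U\times\mathbb{E}$. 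Using a locally finite trivialising cover, $E$ is again a paracompact, metrisable manifold.

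The remaining work is routine once the lemma applies. With $\mathcal{U}\subseteq\mathcal{U}_0$ as provided, set $\Sigma:=\exp^{\mathbf S}|_{\mathcal U}$. Condition 1 of \Cref{localadd} is the conclusion of the lemma, and condition 2 is $\exp^{\mathbf S}(0_e)=e$. Hence $\Sigma$ is a local addition on $E$ obtained from the spray of the connection, as claimed.
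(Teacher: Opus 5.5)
Your proposal is correct and follows the paper's proof: a linear connection on $TE$ from \Cref{lem:con_total_space}, the exponential map of its geodesic spray, the fact that its derivative along the zero section is the identity, and \cite[Lemma~3.15]{KS25} to shrink to an open embedding onto a neighbourhood of the diagonal. Your extra steps, the explicit block-matrix computation of $T_{0_e}\theta$ and the direct sketch that $E$ is paracompact, only add detail; the paper gets the paracompactness of $E$ from \Cref{lem:reg_BB}.
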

By \Cref{lem:reg_BB} it is equivalent to require paracompactness of the total space $E$ to the requirement that the base $M$ is paracompact.
\begin{proof}[{Proof of \Cref{adapcon}}] By \Cref{lem:con_total_space}, the Banach manifold $E$ admits a linear connection. Its geodesic spray has an exponential map 
$\operatorname{Exp}^E\colon \Omega\subseteq TE\to E$.  
The fibre derivative of \(\operatorname{Exp}^E\) along the zero section is the identity. Hence, by \cite[Lemma~3.15]{KS25}, after shrinking $\Omega$, $(p_E,\operatorname{Exp}^E)\colon \Omega\to E\times E$ is an open embedding onto an open neighbourhood of the diagonal. Thus $\operatorname{Exp}^E|_\Omega$ is a local addition.
\end{proof}

\subsection{Banach Lie algebroids and their morphisms}\label{Liealgebroid}

We endow the anchored bundles of the previous section with further structures to arrive at the notion of a Banach algebroid.

\begin{defi}
\label{D_AlmostLieBracketOnAnAnchoredBundle}
An \emph{almost Lie bracket} on an anchored bundle $\left(A,\pi, M,\rho\right)$ is a sheaf of antisymmetric bilinear maps
\[
\LB_{A_U} \colon \Gamma\left( A_{U}\right)  \times\Gamma\left(  A_{U}\right)
\to\Gamma\left(  A_U\right), U\subseteq M \text{open}
\]
which satisfies the following properties
\begin{enumerate}[label=(\textbf{AL \arabic*})]
\item \label{AL1} the Leibniz identity:
\[
\forall\left(   \mathfrak{u}_{1}, \mathfrak{u}_{2}\right)  \in \Gamma \left(A_{U} \right)  ^{2}, \forall f \in C^{\infty}(U)
,\ \LB[\mathfrak{u}_{1},f \mathfrak{u}_{2}]_{A_{U}}=f.\LB[\mathfrak{u}_{1}, \mathfrak{u}_{2}]_{A_{U}}+df(\rho( \mathfrak{u}_{1})). \mathfrak{u}_{2}.
\]
\item \label{AL2}For any  open set $U\subseteq M$ and any $(\mathfrak{u}_1,\mathfrak{u}_2) \in \Gamma(A_{U})^2$, the maps
\[
\mathfrak{u}_1,\mapsto \LB[\mathfrak{u}_1,\mathfrak{u}_2]_{A_{U}} \textrm{ and } \mathfrak{u}_2,\mapsto \LB[\mathfrak{u}_1,\mathfrak{u}_2]_{A_{U}}
\]
only depend on the $1$-jets of $\mathfrak{u}_1$ and $\mathfrak{u}_2$, respectively.
\end{enumerate}
By abuse of notation, such a sheaf of almost Lie brackets will be denoted $\LB_A$. The quintuple $(A,\pi, M,\rho,\LB_A)$ is called an \emph{almost algebroid}.
\end{defi}

\begin{rem}\label{AL2nottrue}\normalfont In the infinite-dimensional Banach setting there exist (almost) Lie brackets which satisfy \ref{AL1} but for which \ref{AL2} is replaced by 
the assumption that the maps in \ref{AL2} 
depend on some $k$-jets of $\mathfrak{u}_1$ and $\mathfrak{u}_2$ respectively with $k>1$
(See \cite{BGT18}). In finite dimension this situation does not occur (cf. \cite{Mar02}).
\end{rem}

\begin{nota}\label{locbrac}\normalfont
In the context of local trivializations (see \Cref{loctriv}), we represent $\rho$ by the map $\r$ as in \Cref{loctho} and note that 
each section $\mathfrak{u}\equiv\mathsf{u}$   and $\mathfrak{v}\equiv\mathsf{v}$ of  ${A}_U\equiv U\times\mathbb{A}$ identifies with a smooth map $\mathsf{u}, \mathsf{v} \colon U\to \mathbb{A}$ such that 
\begin{eqnarray}\label{loctrivrbracket}
\LB[\mathfrak{u},\mathfrak{v}]_A(x)\equiv d_x\mathsf{v}(r_x(\mathsf{u}(x))) -d\mathsf{u}(r_x(\mathsf{v}(x)))+\mathsf{C}_x(\mathsf{u}(x),\mathsf{v}(x))
\end{eqnarray}
where $ x\mapsto \mathsf{C}_x$ is a smooth map from  $U$ to the Banach space  $ L_a^2(\A,\A)$ of continuous skew symmetric bilinear maps on $\A$ with values in $\A$.
For $A=TM$, $r=\operatorname{id}_{TM}$ and $C=0$, this formula gives the usual Lie bracket of vector fields.
\end{nota}

\begin{defi}
\label{D_AlmostLieAlgebroid}
Let  $ \left(A,\pi,M,\rho,\LB_{A} \right) $  an Banach almost Lie algebroid.
 The \emph{Jacobiator}\index{Jacobiator}  is the $\mathbb{R}$-trilinear
map $J_{A_U}:\Gamma(A_{U})^{3}\to\Gamma(A_{ U})$ defined, for any open set $U$ in $M$ and any $\left( \mathfrak{u}_{1}, \mathfrak{u}_{2}, \mathfrak{u}_{3} \right) \in \Gamma(A_{U})^3$ by
\[
J_{A_{U}}( \mathfrak{u}_{1}, \mathfrak{u}_{2}, \mathfrak{u}_{3}
)=\LB[ \mathfrak{u}_{1},\LB[ \mathfrak{u}_{2}, \mathfrak{u}_{3}]_A]_{A}+\LB[ \mathfrak{u}_{2},\LB[ \mathfrak{u}_{3}, \mathfrak{u}_{1}]_{A}]_{A}+\LB[ \mathfrak{u}_{3},\LB[ \mathfrak{u}_{1}, \mathfrak{u}_{2}]_{A}]_{A}
\]
\end{defi}

\begin{defi}
\label{D_ConvenientLieAlgebroid}
A \emph{Banach Lie algebroid}
 is a  Banach almost Lie algebroid $ \left(A,\pi,M,\rho ,\LB_{A} \right) $ such that the associated Jacobiator $J_{A_{U}}$ vanishes identically on each module $\Gamma(A_{U})$.
\end{defi}

\begin{prop}[{\cite{CaPe20,BCP21}}]
\label{P_EquivalenceMorphismJEtensor}
Consider a Banach almost Lie algebroid  $\left(A,\pi,M,\rho ,\LB_{A} \right) $ and denote for $U\subseteq M$ open  by $\LB$ the Lie bracket of vector fields on $U$.
\begin{enumerate}
\item For any open $U\subseteq M$ and any $ \left(  \mathfrak{u}_{1}, \mathfrak{u}_{2} \right) \in \Gamma\left( A_{ U} \right) ^2$, the map
\[
\left(  \mathfrak{u}_{1}, \mathfrak{u}_{2} \right)\mapsto \rho \left( \LB[ \mathfrak{u}_1, \mathfrak{u}_2]_A \right) -\LB[\rho( \mathfrak{u}_1), \rho( \mathfrak{u}_2)]
\]
only depends on the $1$-jet of $\rho$ and the values of $\mathfrak{u}_1$ and $\mathfrak{u}_2$ at $x\in U$.
\item
If the  Jacobiator $J_{A_{U}}$ vanishes identically,  then  we have:
\begin{equation}
\label{eq_rhoCompatible}
\forall \left(  \mathfrak{u}_{1}, \mathfrak{u}_{2} \right) \in \Gamma\left( A_{ U} \right) ^2,\; \rho \left( [ \mathfrak{u}_1, \mathfrak{u}_2]_A \right) =[\rho( \mathfrak{u}_1), \rho( \mathfrak{u}_2)].
\end{equation}
\item
If property \eqref{eq_rhoCompatible} is true, then $J_{A_{U}}\colon \Gamma\left( A_{ U} \right)^3 \rightarrow \Gamma \left( A_{ U} \right)$ is a smooth morphism which takes values in $\ker \rho$ over $U$.
\end{enumerate}
\end{prop}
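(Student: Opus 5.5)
Everything will be done in a local trivialization $A_U\equiv U\times\mathbb{A}$, $TU\equiv U\times\mathbb{M}$, using \Cref{loctho} for the anchor and \Cref{locbrac} for the bracket. Write $r_x\in L(\mathbb{A},\mathbb{M})$ and $\mathsf{C}_x\in L^2_a(\mathbb{A},\mathbb{A})$. For a vector field $X\equiv \mathsf{X}\colon U\to\mathbb{M}$ use the usual bracket $[X,Y](x)=d\mathsf{Y}(\mathsf{X}(x))-d\mathsf{X}(\mathsf{Y}(x))$. All three assertions are local, because the bracket is a sheaf of operators by \ref{AL2}. So it suffices to verify them on a trivializing chart and then note that the resulting objects are chart independent: they are defined intrinsically as combinations of brackets and anchors.

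\emph{Part 1.} Substitute \eqref{loctrivrbracket} into $\rho([\mathfrak{u}_1,\mathfrak{u}_2]_A)$, which gives
\[ r_x\bigl(d\mathsf{u}_2(r_x\mathsf{u}_1)\bigr)-r_x\bigl(d\mathsf{u}_1(r_x\mathsf{u}_2)\bigr)+r_x\mathsf{C}_x(\mathsf{u}_1,\mathsf{u}_2). \]
Compute $[\rho(\mathfrak{u}_1),\rho(\mathfrak{u}_2)]$ with $\rho(\mathfrak{u}_i)(x)=r_x(\mathsf{u}_i(x))$ and the product rule:
\[ d(r\mathsf{u}_2)(r\mathsf{u}_1)=(dr(r\mathsf{u}_1))\mathsf{u}_2+r\,d\mathsf{u}_2(r\mathsf{u}_1), \]
and symmetrically for the other term. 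Subtracting, the terms containing derivatives of $\mathsf{u}_i$ cancel. What remains is
\[ r_x\mathsf{C}_x(\mathsf{u}_1,\mathsf{u}_2)-(d_xr(r_x\mathsf{u}_1))\mathsf{u}_2+(d_xr(r_x\mathsf{u}_2))\mathsf{u}_1, \]
which involves only $\mathsf{u}_i(x)$, $r_x$, $d_xr$ and $\mathsf{C}_x$. This is the claim, with the $1$-jet of $\rho$ and the bracket data entering through $\mathsf{C}$.

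\emph{Part 2.} Assume $J\equiv0$. Set $\Phi(\mathfrak{u}_1,\mathfrak{u}_2)=\rho[\mathfrak{u}_1,\mathfrak{u}_2]_A-[\rho\mathfrak{u}_1,\rho\mathfrak{u}_2]$. By Part 1, $\Phi$ is $C^\infty(U)$-bilinear, i.e.\ tensorial. Expand $J(\mathfrak{u}_1,\mathfrak{u}_2,f\mathfrak{u}_3)$ with \ref{AL1} and antisymmetry. The $f$-linear terms reproduce $fJ$. The non-tensorial remainder should, as in the classical computation, reduce to
\[ \bigl(\rho[\mathfrak{u}_1,\mathfrak{u}_2]_A-[\rho\mathfrak{u}_1,\rho\mathfrak{u}_2]\bigr)(f)\,\mathfrak{u}_3 = df(\Phi(\mathfrak{u}_1,\mathfrak{u}_2))\,\mathfrak{u}_3. \]
Since $J=0$, this vanishes for all $f$ and $\mathfrak{u}_3$. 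Fix $x$ and pick $\mathfrak{u}_3$ constant and nonzero in the chart. The chart functions $f=\ell\circ\phi$, for $\ell\in\mathbb{M}'$ continuous linear, give $\ell(\Phi(\mathfrak{u}_1,\mathfrak{u}_2)(x))=0$ for all $\ell$. Hahn--Banach then gives $\Phi=0$. This avoids any need for bump functions, since only linear functions on the chart are used; this is the point specific to the Banach setting. If $\mathbb{A}=0$ the statement is trivial.

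\emph{Part 3.} Assume \eqref{eq_rhoCompatible}. Redo the expansion of $J(\mathfrak{u}_1,\mathfrak{u}_2,f\mathfrak{u}_3)$. Now the remainder vanishes, so $J$ is $C^\infty(U)$-linear in the third slot, and by cyclic symmetry in every slot. To conclude that it is pointwise, i.e.\ a smooth tensor $x\mapsto J_x\in L^3(\mathbb{A},\mathbb{A})$, do not localise by bump functions. Instead, substitute \eqref{loctrivrbracket} twice in the chart. The second derivatives of the $\mathsf{u}_i$ cancel, by symmetry of second derivatives, and the first-derivative terms cancel thanks to tensoriality; check the latter with constant and affine local sections. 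The result is an explicit expression in $r,dr,\mathsf{C},d\mathsf{C}$ evaluated on $\mathsf{u}_i(x)$, which is smooth in $x$. Finally, $\rho\circ J=0$ follows from \eqref{eq_rhoCompatible} together with the Jacobi identity for vector fields: $\rho J(\mathfrak{u}_1,\mathfrak{u}_2,\mathfrak{u}_3)$ is the Jacobiator of $\rho\mathfrak{u}_1,\rho\mathfrak{u}_2,\rho\mathfrak{u}_3$.

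The main obstacle is this pointwise-ness in Part 3, together with the separation argument in Part 2, in the absence of bump functions. I would handle both through explicit chart formulas and linear test functions/sections as above.
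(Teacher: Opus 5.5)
Parts 1 and 2 are correct. The paper gives no proof of this proposition and only cites \cite{CaPe20,BCP21}, so there is no in-paper argument to compare against; your route is the natural classical one. The remainder in $J(\mathfrak{u}_1,\mathfrak{u}_2,f\mathfrak{u}_3)$ is indeed $-df\bigl(\Phi(\mathfrak{u}_1,\mathfrak{u}_2)\bigr)\mathfrak{u}_3$. Testing with a constant $\mathfrak{u}_3$ and $f=\ell\circ\phi$, followed by Hahn--Banach, is the right way to avoid bump functions. One caveat: these tests use sections and functions on a chart domain $V\subseteq U$. You therefore need the hypotheses ($J\equiv0$, resp.\ \eqref{eq_rhoCompatible}) on $\Gamma(A_V)$, i.e.\ read sheaf-wise as in the paper's definition of a Banach Lie algebroid. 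Without bump functions they do not descend from $U$ to $V$.

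There is a genuine gap in Part 3, where you say the first-order terms of $J$ cancel ``thanks to tensoriality'', checked on affine sections. In the Banach setting, $C^\infty(U)$-linearity does not imply pointwise dependence. Take $\mathbb{M}=\mathbb{A}=\ell^2$ and choose, by Hahn--Banach, a nonzero continuous linear $\Lambda\colon L(\mathbb{M},\mathbb{A})\to\mathbb{A}$ vanishing on the compact operators. Then $\mathsf{u}\mapsto\Lambda(d\mathsf{u})$ is $C^\infty(U)$-linear, because $d(f\mathsf{u})=df\otimes\mathsf{u}+f\,d\mathsf{u}$ and the first term has rank one. Yet it is not pointwise. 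Likewise, an affine section $y\mapsto a+D(y-x)$ with $D$ of infinite rank is not a finite combination $a+\sum_i f_ie_i$, so $C^\infty$-linearity tells you nothing about it.

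What you actually need is the explicit identity produced by your double substitution of \eqref{loctrivrbracket}. Write $a_i=\mathsf{u}_i(x)$ and $\Phi_x(a,b)=r_x\mathsf{C}_x(a,b)-(d_xr(r_xa))b+(d_xr(r_xb))a$ as in Part 1. All second-derivative terms and all terms containing two first derivatives cancel in pairs, and
\[ J(\mathfrak{u}_1,\mathfrak{u}_2,\mathfrak{u}_3)(x)=\sum_{\mathrm{cyc}}\Bigl[(d_x\mathsf{C}(r_xa_1))(a_2,a_3)+\mathsf{C}_x\bigl(a_1,\mathsf{C}_x(a_2,a_3)\bigr)-d_x\mathsf{u}_3\bigl(\Phi_x(a_1,a_2)\bigr)\Bigr]. \]
Under \eqref{eq_rhoCompatible}, constant sections in a chart give $\Phi_x\equiv0$. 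So $J$ is the smooth tensor given by the first two terms, and your argument for $\rho\circ J=0$ then applies pointwise. The same identity with $\mathsf{u}_1,\mathsf{u}_2$ constant and $\mathsf{u}_3(y)=\ell(y-x)c$ also recovers Part 2.

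Alternatively, you can rescue the tensoriality route. The first-order terms evaluate each $d_x\mathsf{u}_i$ only on finitely many vectors, so $d_x\mathsf{u}_i$ may be replaced by a finite-rank operator before invoking $C^\infty$-linearity. That observation, not tensoriality itself, is the missing step.
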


\begin{cor}
\label{C_rhoLieAlgebraMorphism}
If $ \left(A,\pi,M,\rho,\LB_{A} \right) $ is a Banach Lie algebroid, then $\rho$ induces a morphism of Lie algebras from $\Gamma(A_{ U})$ into $\mathfrak{X}(U)$.
\end{cor}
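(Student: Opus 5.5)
This corollary follows from \Cref{P_EquivalenceMorphismJEtensor}, part 2, together with the linearity properties of $\rho$ on sections. Let $U\subseteq M$ be open.

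\textbf{Step 1: $\rho$ is $\mathbb{R}$-linear on sections.} Since $\rho\colon A\to TM$ is a vector bundle morphism, the induced map $\rho\colon\Gamma(A_U)\to\mathfrak{X}(U)$, $\mathfrak{u}\mapsto\rho\circ\mathfrak{u}$, is $\mathcal{F}(U)$-linear, and in particular $\mathbb{R}$-linear. Smoothness of $\rho\circ\mathfrak{u}$ is clear because it is a composition of smooth maps. In a local trivialization it is given by $x\mapsto r_x(\mathsf{u}(x))$, as in \Cref{loctho}.

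\textbf{Step 2: $\rho$ preserves brackets.} The Jacobiator $J_{A_U}$ vanishes identically by \Cref{D_ConvenientLieAlgebroid}. Part 2 of \Cref{P_EquivalenceMorphismJEtensor} then gives
\[ \rho(\LB[\mathfrak{u}_1,\mathfrak{u}_2]_A)=\LB[\rho(\mathfrak{u}_1),\rho(\mathfrak{u}_2)] \qquad\text{for all } \mathfrak{u}_1,\mathfrak{u}_2\in\Gamma(A_U). \]
This is precisely the bracket-compatibility required of a Lie algebra morphism.

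\textbf{Step 3: both sides are Lie algebras.} On $\Gamma(A_U)$ the bracket $\LB_{A_U}$ is $\mathbb{R}$-bilinear and antisymmetric by \Cref{D_AlmostLieBracketOnAnAnchoredBundle}, and it satisfies the Jacobi identity because the Jacobiator vanishes. Hence $\Gamma(A_U)$ is a real Lie algebra. The space $\mathfrak{X}(U)$ with the usual bracket of vector fields is a Lie algebra; this is standard on Banach manifolds.

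Combining Steps 1 to 3, $\rho$ is a Lie algebra morphism $\Gamma(A_U)\to\mathfrak{X}(U)$.

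The only point needing care is that part 2 of \Cref{P_EquivalenceMorphismJEtensor} is stated sheaf-wise. We must therefore use that the vanishing of the Jacobiator holds on each $\Gamma(A_U)$, and not only on global sections. This is exactly what \Cref{D_ConvenientLieAlgebroid} assumes, so no extension of local sections (which may fail in the Banach setting) is required.
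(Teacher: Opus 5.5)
Your proof is correct and follows the paper's route: the paper gives no separate argument and treats this corollary as an immediate consequence of part 2 of \Cref{P_EquivalenceMorphismJEtensor}, where the vanishing Jacobiator gives bracket compatibility, together with the $\mathcal{F}(U)$-linearity of $\rho$ on sections. Your remark that the Jacobiator must vanish on every $\Gamma(A_U)$, not only on global sections, is exactly the sheaf-wise hypothesis built into the definition of a Banach Lie algebroid, so no extension of local sections is needed.
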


\begin{defi}\label{D_Pre-LieAlgeboird} Let  $ \left(A,\pi,M,\rho,\LB_{A} \right) $  an Banach almost Lie algebroid. Then  $ \left(A,\pi,M,\rho,\LB_{A} \right) $ is called a pre-Lie algebroid if  $\rho$ induces a morphism of Lie algebras from $\Gamma(A_{ U})$ into $\mathfrak{X}(U)$ for any open set $U$ in $M$.
\end{defi}

\begin{rem} \normalfont Of course for $ \left(A,\pi,M,\rho,\LB_{A} \right) $  pre-Lie algebroid, in general each Jacobiator $J_{A_{U}}$ does not vanishes. But,  from Proposition \ref{P_EquivalenceMorphismJEtensor},  in this case  $J_{A_{U}}$ take values in $\ker \rho$ over $U$.
\end{rem}

Let $({A}, \pi, M,\rho, \LB_{{A}})$ and  $({A}', \pi', M',\rho', \LB_{\mathcal{A}'})$  two Banach Lie algebroids. 
Given any open set $U$ in $M$, the set  of sections $\Gamma(A_U)$ has a structure of  $\mathcal{F}(U)$ module. We introduce a few notations:
\begin{defi}\label{defi:bun_mor_sect} For a bundle morphism $\Psi\colon A\to A'$ between Banach Lie algebroids over $\psi\colon M\to M'$ denote by $\psi^\ast A'$ the pull-back of $A'$ over $M$,
\begin{itemize}
\item $\Gamma(\psi^\ast A'_U)$ the $\mathcal{F}(U)$-module of sections of $\psi^\ast A'_U$,
\item $\Gamma_\Psi(A_U)$, the $\mathcal{F}(U)$-sub-module defined as the finite $C^\infty(U)$-linear span of the set $\left\{ \mathfrak{u}'\circ \psi, \;\; \mathfrak{u}'\in  \Gamma(\psi^\ast A'_{U'})\right\}.$
\end{itemize}
\end{defi}
Note that each $\widetilde{\mathfrak{u}}\in \G_\Psi(A_U)$ can be  written 
\begin{equation}\label{eq_PSI-decomposition}
\widetilde{\mathfrak{u}}=\dis\sum_{i=1}^n \alpha_i.(\mathfrak{u}'_i\circ \psi)
\end{equation}
{\bf But this decomposition is not unique}.
When $A'$ is finite dimensional,  if  $A'_U$ is trivial,  then  $\Gamma(\widetilde{A'}_U)$  is a finite rank free module over $\mathcal{F}(U)$  and in fact we have $\G_\Psi(A_U)=\Gamma(\widetilde{A'}_U)$. But of course this is no longer  true in general and so   we have $\G_\Psi(A_U)\not=\Gamma(\widetilde{A'}_U)$ when $\mathbb{A}$ is infinite-dimensional.

Now as in finite dimension (cf. \cite{HiMa90}),  we introduce morphisms of Lie algebroids which lead to a category of (possibly infinite-dimenisonal) Banach Lie algebroids.

\begin{defi}
\label{D_LieMorphism}
The bundle morphism $\Psi\colon A \rightarrow A'$ is called a \emph{Lie algebroid  morphism} over $\psi \colon M \rightarrow M'$ if it fulfills the following conditions:
\begin{enumerate}[label=(\textbf{LM \arabic*})]
\item$\rho' \circ \Psi= T\psi \circ \rho$ \label{LM1}
\item\label{LM2}for any section $\mathfrak{u}$ and $\mathfrak{v}$ in $\G(A_U)$ with  a $\Psi$-decomposition:
 \[\Psi\circ \mathfrak{u}=\dis\sum_{i=1}^n \alpha_i.(\mathfrak{u}'_i\circ\psi) \text{ and }\Psi\circ \mathfrak{v}=\dis\sum_{j=1}^m \beta_j.(\mathfrak{v}'_j\circ\psi) \text{ in } \Gamma_\Psi(\mathcal{A}_U)\]
 the map $\Psi ([\mathfrak{u},\mathfrak{v}]_{A})$ is defined as
\begin{equation}\label{eq_LieMoprhism}
\dis\sum_{i=1}^n \sum_{j=1}^m\alpha_i.\beta_j.([\mathfrak{u}'_i,
\mathfrak{v}'_j]_{A'}\circ\psi)+\sum_{j=1}^m\rho(\mathfrak{u})(\beta_j).(\mathfrak{v}'_j\circ\psi)-\sum_{i=1}^n\rho(\mathfrak{v})
(\alpha_i).(\mathfrak{u}'_i\circ\psi)
\end{equation}
\end{enumerate}
\end{defi}

\begin{prop}[{cf. \cite[Lemma 1.4]{HiMa90}}]\label{unicityofLieMoprhism}
For any  $x\in U$, the value of $\Psi (\LB[\mathfrak{u},\mathfrak{v}]_{A})$ in \eqref{eq_LieMoprhism} at $\psi(x)$ depends only on the $1$-jet of $\Psi\circ\mathfrak{u}$ and $\Psi\circ\mathfrak{v}$ at $x$ and so is independent of the chosen $\Psi$-decompositions.
\end{prop}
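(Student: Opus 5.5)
We show that the right-hand side of \eqref{eq_LieMoprhism}, evaluated at $x$, is determined by the $1$-jets at $x$ of $\Psi\circ\mathfrak{u}$ and $\Psi\circ\mathfrak{v}$. Independence of the chosen $\Psi$-decompositions then follows at once: two decompositions of the same section $\Psi\circ\mathfrak{u}$ have identical $1$-jets. By bilinearity and antisymmetry, it is enough to fix the decomposition of $\Psi\circ\mathfrak{v}$ and show that the expression does not change when we change the decomposition of $\Psi\circ\mathfrak{u}$. So it suffices to show that if $\sum_i\alpha_i(\mathfrak{u}'_i\circ\psi)=0$ on a neighbourhood of $x$ (or merely has vanishing $1$-jet at $x$), then
\[
\sum_{i,j}\alpha_i\beta_j\,([\mathfrak{u}'_i,\mathfrak{v}'_j]_{A'}\circ\psi)-\sum_i\rho(\mathfrak{v})(\alpha_i)\,(\mathfrak{u}'_i\circ\psi)
\]
vanishes at $x$. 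The middle term of \eqref{eq_LieMoprhism} involves $\rho(\mathfrak{u})(\beta_j)$. This term depends only on $\mathfrak{u}$ and not on its decomposition, so it drops out.

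We work in local trivializations $A'_{U'}\equiv U'\times\A'$ around $\psi(x)$ and $A_U\equiv U\times\A$, with $\psi(U)\subset U'$, and write $\mathsf{u}'_i,\mathsf{v}'_j\colon U'\to\A'$. The local bracket formula \eqref{loctrivrbracket} for $A'$ gives
\[
[\mathfrak{u}'_i,\mathfrak{v}'_j]_{A'}\circ\psi=d_{\psi}\mathsf{v}'_j\bigl(r'_{\psi}(\mathsf{u}'_i\circ\psi)\bigr)-d_{\psi}\mathsf{u}'_i\bigl(r'_{\psi}(\mathsf{v}'_j\circ\psi)\bigr)+\mathsf{C}'_{\psi}(\mathsf{u}'_i\circ\psi,\mathsf{v}'_j\circ\psi).
\]
We multiply by $\alpha_i\beta_j$ and sum, setting $\mathsf{U}:=\sum_i\alpha_i\,\mathsf{u}'_i\circ\psi$ and $\mathsf{V}:=\sum_j\beta_j\,\mathsf{v}'_j\circ\psi$. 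The first and third terms are $C^\infty$-linear in the $\alpha_i$, so they assemble into $\sum_j\beta_j\,d\mathsf{v}'_j(r'(\mathsf{U}))+\mathsf{C}'(\mathsf{U},\mathsf{V})$, which depends only on the value $\mathsf{U}(x)$.

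For the second term, we use condition \ref{LM1} to write $r'_{\psi}(\mathsf{V})=r'_\psi(\Psi\circ\mathfrak{v})=T\psi(\rho(\mathfrak{v}))$. The chain rule then turns $\sum_i\alpha_i\,d_\psi\mathsf{u}'_i(T\psi\,\rho(\mathfrak{v}))$ into $\sum_i\alpha_i\,\rho(\mathfrak{v})(\mathsf{u}'_i\circ\psi)$. Adding the term $-\sum_i\rho(\mathfrak{v})(\alpha_i)(\mathsf{u}'_i\circ\psi)$ and applying the Leibniz rule gives
\[
-\sum_i\Bigl(\alpha_i\,\rho(\mathfrak{v})(\mathsf{u}'_i\circ\psi)+\rho(\mathfrak{v})(\alpha_i)\,\mathsf{u}'_i\circ\psi\Bigr)=-\rho(\mathfrak{v})(\mathsf{U})=-d\mathsf{U}(\rho(\mathfrak{v})).
\]
This depends only on the $1$-jet of $\mathsf{U}=\Psi\circ\mathfrak{u}$ at $x$.

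Combining the two parts, the value at $x$ depends only on $j^1_x(\Psi\circ\mathfrak{u})$. By symmetry it also depends only on $j^1_x(\Psi\circ\mathfrak{v})$ together with $\mathfrak{u}(x)$, $\mathfrak{v}(x)$ and $\rho$. Finally we must check that this local statement is independent of the trivialization. This is immediate, because the original expression is intrinsically defined and the conclusion is a pointwise equality of two intrinsic expressions.

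The main obstacle is the step where \ref{LM1} is used. The derivative $d_{\psi(x)}\mathsf{u}'_i$ is only known along vectors in the image of $T\psi$, so the anchor compatibility is what allows us to rewrite $d\mathsf{u}'_i(r'(\Psi\mathfrak{v}))$ as a derivative of the pulled-back functions along $\rho(\mathfrak{v})$ on $M$.
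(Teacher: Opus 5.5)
Your proof is correct and follows essentially the same route as the paper. Both arguments compute in a local trivialization using the local bracket formula, apply \ref{LM1} to rewrite $r'(\Psi\circ\mathfrak{v})$ as $T\psi(\rho(\mathfrak{v}))$, and then use the chain and Leibniz rules. The only difference is that you treat one argument at a time via antisymmetry, whereas the paper expands both derivative terms at once to get the closed formula $d_x\widetilde{\mathsf{v}}(r_x(\mathsf{u}(x)))-d_x\widetilde{\mathsf{u}}(r_x(\mathsf{v}(x)))+\mathsf{C}'_{\psi(x)}(\widetilde{\mathsf{u}}(x),\widetilde{\mathsf{v}}(x))$; your remark that the value also depends on $\mathfrak{u}(x)$ and $\mathfrak{v}(x)$ through $\rho$ is accurate and slightly more precise than the paper's final sentence.
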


Before establishing \Cref{unicityofLieMoprhism}, let us comment on morphisms.
 \begin{rem}\label{Psibracket}${}$\normalfont According to the identity \eqref{eq_LieMoprhism}, $\Psi (\LB[\mathfrak{u},\mathfrak{v}]_{A})$ belongs to $\Gamma_\Psi(A_U)$.
\begin{enumerate}
\item[1.] If $\Psi(\mathfrak{u})=\mathfrak{u}'\circ\psi$ and $\Psi(\mathfrak{v})=\mathfrak{v}'\circ\psi$ then we have
  \begin{equation}\label{relativesection}
\Psi(\LB[\mathfrak{u},\mathfrak{v}]_A)=\LB[\mathfrak{u}',\mathfrak{v}']_{A'}\circ \psi
  \end{equation}
\item[2.] Let $\Psi\colon A\to A'$ be  bundle morphism  over the identity of $M$. Then for any open set $U$ in $M$, the module $\G_\Psi(A_U)$ is the $\mathcal{F}(U)$ module $\G(A_U)$ and so $\Psi$ is a Lie algebroid morphism if and only if  the assumption \ref{LM1} of \Cref{D_LieMorphism} is satisfied and the condition \ref{LM2} is replaced by
\[ \Psi(\LB[\mathfrak{u},\mathfrak{v}]_A)=\LB[\Psi(\mathfrak{u}),\Psi(\mathfrak{v})]_{A'},\]
\noindent for all $\mathfrak{u}$ and $\mathfrak{v}$ in $\G(A_U)$ and all open set $U$ in $M$.
\item[3.]Consider two Lie algebroid morphisms $\Psi\colon A\to A'$ over $\psi\colon M\to M'$ and $\Psi'\colon A'\to A^{\prime\prime}$. over $\psi'\colon M'\to M^{\prime\prime}$, then by same arguments as in \cite[after the proof of Lemma 1.4]{HiMa90}, we obtain that $\Psi'\circ\Psi$ is a Lie algebroid morphism from $A$ to $A^{\prime\prime}$, over $\psi'\circ\psi$. Hence, one obtains a category of Banach Lie algebroids.
\end{enumerate}
\end{rem}

\begin{proof}[Proof of \Cref{unicityofLieMoprhism}]
Fix some $x\in U$. Property \ref{LM1} in the local trivialisation reads $T_x\psi(r_x)=r'_{\psi(x)}\circ\Psi_x$. According to Notation \ref{loctho} we identify  sections with their local representatives $\mathsf{u}$ and $\mathsf{v}$ and suppress the distinction in the notation  $\widetilde{\mathsf{u}}:= \Psi \circ \mathsf{u}$ and $ \widetilde{\mathsf{v}}:=\Psi \circ \mathsf{v}$. Using the decompositions $\widetilde{\mathsf{u}} = \sum_i \alpha_i(\mathsf{u}'_i\circ\psi),  \widetilde{\mathsf{v}} = \sum_j \beta_j(\mathsf{v}'_j\circ\psi)$, the chain rule gives 
\begin{align*}
d_x\widetilde{\mathsf{v}}(r_x(\mathsf{u})) &= \sum_j d_x\beta_j(r_x(\mathsf{u}))\,\mathsf{v}'_j(\psi(x)) + \beta_j(x)\, d_{\psi(x)}\mathsf{v}'_j \bigl(T_x\psi(r_x(\mathsf{u}))\bigr), \text{  and }\\
d_x\widetilde{\mathsf{u}} (r_x(\mathsf{v}))&=\displaystyle\sum_{i} d_x\alpha_i(r_x(\mathsf{v}))\mathsf{u}'_i(\psi(x))+ \alpha_i(x)d_{\psi(x)}\mathsf{u}'_i(T_x\psi(r_x(\mathsf{v}(x)))),\end{align*}
 and an analogous relation for the maps  $\mathsf{u}$ and $\mathsf{v}$. Hence, as in Notation \ref{locbrac} we can write $[\mathfrak{u}'_i,\mathfrak{v}'_j]_{A'}(\psi(x))$ as
 \begin{align*}
 &d_{\psi(x)}\mathsf{v}'_j(r_{\psi(x)}'(\mathsf{u}'_i(\psi(x)))))- 
d_{\psi(x)}\mathsf{u}'_i(r_{\psi(x)}'(\mathsf{v}'_j(\psi(x))))) \\ +& \mathsf{C}'_{\psi(x)}\left(\mathsf{u}'_i(\psi(x)),
 \mathsf{v}'_j(\psi(x))\right)
 \end{align*}
 This expression at $\psi(x)$ shows that  \eqref{eq_LieMoprhism} becomes  
 \[d_x\widetilde{\mathsf{v}}(r_{x}({\mathsf{u}(x)}))-d_x\widetilde{\mathsf{u}}(r_{x}(\mathsf{v}(x))) +\mathsf{C}'_{\psi(x)}\left(\widetilde{\mathsf{u}}(x),\widetilde{\mathsf{v}}(x)\right).\]
 This depends only on $\widetilde{\mathsf{u}}(x), \widetilde{\mathsf{v}}(x), d_x\widetilde{\mathsf{u}}, d_x\widetilde{\mathsf{v}}$, and therefore only on the 1-jets of $\Psi\circ \mathfrak{u}$ and $\Psi\circ \mathfrak{v}$ at $x$.
 \end{proof}

\section{Time-dependent sections and homotopy}\label{timedependenhomotopy}
In this section we develop a flow approach for time dependent section in an algebroid. The aim is to recover the description of homotopies for admissible curves in Lie algebroids developed in \cite{CF03} for finite-dimensional vector fields. On the way we will also revisit the theory of linear vector fields on algebroids (see \cite{Mac05} for the finite dimensional theory).

The following notion of mixed (finite-order) differentiability will be useful. It was developed in \cite{AaS15}, cf. also \cite{GaS22} for a more advanced version). As these sources use Bastiani smoothness, we mention first how this applies to our setting

\begin{defi}
A continuous mapping on an open subset of a Banach space is called Bastiani $C^k$ for $k\in \N_0\cup\{\infty\}$ if all it's  (iterated) directional derivatives exist and glue to continuous mappings on a suitable cartesian product of the open set and the ambient space. For a non-open subset with dense interior and locally convex (in the sense that every point admits a convex neighborhood in the set) we say a mapping is Bastiani $C^k$ if it is continuous and Bastiani $C^k$-on the interior of the set, such that the iterated direction derivatives extend continuously to the boundary.
\end{defi}
\begin{rem}\label{rem:Bastiani_vs_Frechet}
In general Bastiani $C^k$ is weaker than the usual Frech\'{e}t differentiability (briefly denoted by $FC^k$). However, as outlined in \cite[1.28]{Sch23} and the references therein, Bastiani $C^{k+1}$ implies the $FC^k$-property. In particular, for smooth maps, both concepts coincide. If the domain of the mapping is a subset of a finite dimensional space, then we have $C^k = FC^k$.
\end{rem}
Following up on the remark it is important to note that in the sequel whenever we use finite orders of differentiability, the parameters for the finite order differentiability will vary in intervals or finite cartesian products of intervals. Thus we may freely exchange Bastiani and Frech\'{e}t differentiability.
\begin{defi}
A mapping $f \colon U \times V \rightarrow F$ on a cartesian product of locally convex subsets of Banach spaces with dense interior is called $C^{r,s}$-map (with $r,s \in \N_0\cup\{\infty\})$ if it is continuous and for each fixed $y \in V$ the map $f(\cdot , y)$ is Bastiani $C^r$ and each of its iterated directional derivatives is Bastiani $C^s$ with respect to the parameter $y \in V$. 
\end{defi}
Since $C^{r,s}$-mappings satisfy variants of the chain rule, it makes sense to define them on manifolds and consider vector bundle sections of this mixed differentiability class. As we will use them later, we recall the following chain rules from \cite[Lemma 3.17, Lemma 3.18 and Lemma 3.19]{AaS15}:

\begin{setup}\label{Crs_chainrules}
We assume that $f$ is a $C^{r,s}$-map for $r,s\in \N_0\cup\{\infty\}$ and $k\geq r+s$:
\begin{enumerate}
\item if $g_1$ is $C^r$ and $g_2$ is $C^s$, then $f\circ (g_1 \times g_2)$ is $C^{r,s}$
\item If $h$ is $C^{k}$, then $h\circ f$ is $C^{r,s}$
\item If $g$ is a $C^{r,k}$-map such that the composition $g\circ (\text{pr}_1,f)(x,y)=g(x,f(x,y))$ makes sense, the map $g\circ (\text{pr}_1,f)$ is a $C^{r,s}$-map.
\end{enumerate}
\end{setup}

\subsection{Time-dependent sections}\label{Timedependent}
In this section we consider time dependent sections for a Banach bundle. For this let us fix some notation: 
\begin{defi}
We work over a Banach bundle $\pi\colon A\to M$. A \emph{time-dependent (local) section} of $A$ is a map 
$\mathfrak{u}\colon I\times U\to A_{U}$ where $I$ is an  interval of $\R$, $U \subseteq M$ open. Typically we will discuss time-dependent sections of class $C^{\ell,\infty}$, i.e. we have some regularity $\ell\in \N_0 \cup \{\infty\}$ in time and smoothness in $x$.

We say that a time dependent section $\mathfrak{u} \colon I \times M \rightarrow A$ \emph{extends a path $c\colon I\to A$}  if $\mathfrak{u}(t,\g(t))=c(t)$ for $\g (t) := \pi (c(t))$. 
\end{defi}

\begin{setup}[Flows of time dependent $C^{r,s}$-sections]\label{setup:flowsCrs}
For any (local) time-dependent vector field $X$ on $M$ (defined on an open set $I\times U$ of $I \times M$), the flow $\Fl_X^{t,s}$ of $X$ is defined for initial time $s$,
$$\dis\frac{\p}{\p t} \Fl_X^{t,s}(x)=X(t, \Fl_X^{t,s}(x)), \quad   \Fl_X^{s,s}(x)=x$$   
Flows have the following properties (the proofs are standard \cite{Lang01}, but for the non standard differentiability cf. \cite[Theorem C]{AaS15}):
\begin{itemize}
\item for any $(s,x)\in I\times U$ there exists $\eta>0$ and a neighbourhood $W$ of $x$ such that $\Fl_X^{t,s}$ is defined on $W$ for $t\in [s-\eta,s+\eta]$
\item if the map $(s,x)\mapsto X(s,x)$ is $C^{k,\infty}$, 
\begin{enumerate}
\item the map $((s,t),x)\mapsto \Fl_X^{t,s}(x)$ is $C^{k,\infty}$ and for $(s,x)$ fixed the map $t \mapsto \Fl_X^{t,s}(x)$ is $C^{k+1}$;
 \item $\Fl_X^{t,s}$ satisfies the relation $\Fl_X^{t,s}\circ \Fl_X^{s,r}=\Fl_X^{t,r}\;$ if  $\Fl^{t,s}$, $\Fl^{ s,r}$  and $\Fl^{t,r}$ are defined.
 \end{enumerate}
 \end{itemize}
 \end{setup}

\begin{rem}\label{TMcase} 
\normalfont In  the particular  case of the anchored bundle $(TM,M,\mathrm{id})$, a time-dependent section is a \emph{time-dependent vector field} and a vector field along a path $\g\colon [0,1]\to M$  is nothing but an admissible curve in $TM$.
\end{rem}  
  
We fix a conventions for mappings of several variables.

\begin{setup}
    Denote for mappings $\g \colon [0,1] \times [0,1] \rightarrow M$ by a subscript always the map defined via $\g_\varepsilon := \g (\varepsilon , \cdot)$ on occasion we will write $\g^t:=\g(\cdot,t)$.
\end{setup}
The next extension result allows us to work with flow domains extending slightly beyond the parameter interval. Its proof is an application of the Seeley-extension theorem which we defer to \Cref{app:proof}

\begin{prop}\label{timedepend} Fix a Banach manifold $M$ and $k \in \N \cup \{\infty\}, k \geq 2$. We consider a $C^k$-map ${\g}\colon [0,1]\times [0,1] \to  M,$ such that
$$\g(\varepsilon,0)=\g(0,0) \text{ and }\g(\varepsilon,1)=\g(0,1), \text{ for all }\varepsilon\in [0,1]$$
There exists $\eta>0$ and a $C^k$-extension $\hat{\g} \colon [0,1] \times [-\eta, 1+\eta] \rightarrow M$ such that $\hat{\g}(\varepsilon,t)=\g_\varepsilon (0)$ for $t\in ]-\eta,-\frac{\eta}{2}]$ and $\hat{\g} (\varepsilon,t)=\g_\varepsilon (1)$ for all $t\in  [1+\frac{\eta}{2},1+\eta]$.
\end{prop}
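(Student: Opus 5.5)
The plan is to treat the two endpoints $t=0$ and $t=1$ separately and identically, working in a single chart around the fixed endpoint. I first extend $\gamma$ across the boundary with a Seeley-type reflection, and then damp the extension to the constant endpoint with a cutoff in $t$. I describe the construction at $t=0$; the one at $t=1$ is obtained by applying the same construction to $(\varepsilon,t)\mapsto \gamma(\varepsilon,1-t)$ and reflecting back.

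\emph{Localisation.} Set $x_0:=\gamma(0,0)=\gamma(\varepsilon,0)$ and choose a chart $(U,\phi)$ around $x_0$ whose image $\phi(U)$ contains a convex ball $B$ centred at $\phi(x_0)$, say of radius $2r$. The map $\gamma$ is continuous and $[0,1]$ is compact, so there is $\delta>0$ with $\gamma([0,1]\times[0,\delta])\subseteq \phi^{-1}(B_r(\phi(x_0)))$. We may then regard $f:=\phi\circ\gamma|_{[0,1]\times[0,\delta]}$ as a $C^k$ map with values in the model Banach space $\mathbb{M}$.

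\emph{Seeley extension.} Extend $f$ to $[0,1]\times[-\delta,\delta]$ by the Seeley formula
\[ Ef(\varepsilon,-t)=\sum_j a_j\, f(\varepsilon,b_jt)\,\psi(b_jt), \qquad t>0,\]
with Seeley's coefficients $a_j,b_j$ and a cutoff $\psi$; for finite $k$ a finite Hestenes-type sum with $\sum_j a_j(-b_j)^m=1$ for $m\le k$ suffices. The coefficient conditions make all $t$-derivatives up to order $k$ match at $t=0$. Derivatives in $\varepsilon$ commute with the formula, so mixed derivatives also glue continuously and $Ef$ is $C^k$ in the sense of Bastiani on the non-open domain. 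By \Cref{rem:Bastiani_vs_Frechet} there is no difference between the Bastiani and Fréchet versions here.

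Seeley's proof uses only absolute convergence of the series in the sup norms of the derivatives, so it works verbatim for Banach-valued maps. Since $Ef(\varepsilon,0)=\phi(x_0)$, continuity and compactness of $[0,1]$ give $\eta\in(0,\delta)$ with $Ef([0,1]\times[-\eta,0])\subseteq B$.

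\emph{Damping.} Pick a smooth $\chi\colon\mathbb{R}\to[0,1]$ with $\chi=1$ on $[-\eta/4,\infty)$ and $\chi=0$ on $(-\infty,-\eta/2]$. For $t\in[-\eta,0]$ define
\[\hat\gamma(\varepsilon,t):=\phi^{-1}\bigl(\phi(x_0)+\chi(t)\,(Ef(\varepsilon,t)-\phi(x_0))\bigr),\]
and set $\hat\gamma=\gamma$ for $t\ge0$. Convexity of $B$ keeps the argument inside $B\subseteq\phi(U)$. The formula is $C^k$ as a product of a smooth and a $C^k$ map, and it agrees with $Ef$ near $t=0$, hence joins $\gamma$ in class $C^k$. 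For $t\le-\eta/2$ it equals $x_0=\gamma_\varepsilon(0)$.

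Doing the same at $t=1$ (after shrinking $\eta$ to a common value) yields the required $\hat\gamma$ on $[0,1]\times[-\eta,1+\eta]$.

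The main obstacle I expect is the Seeley step in the $k=\infty$ Banach-valued case. There one must check that the Seeley series converges together with all mixed derivatives, uniformly in $\varepsilon$. I would handle this by bounding each derivative by $\sum_j|a_j|\,|b_j|^m\,\|\partial^m f\|_\infty$, which is finite by Seeley's choice of coefficients.
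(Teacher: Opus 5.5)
Your proposal is correct and follows the paper's route: work in a chart at each fixed endpoint, Seeley-extend in $t$ uniformly in $\varepsilon$, choose $\eta$ by compactness so the extension stays in the chart, and then force constancy with a cutoff. The paper cites a ready-made Seeley extension operator for Banach-valued $C^k$ maps where you sketch the argument. The other difference is the damping step: the paper reparametrises time, $\hat\gamma(\varepsilon,t)=\phi_0^{-1}\tilde\alpha(\varepsilon,h(t)t)$, which needs no convexity, whereas you interpolate linearly inside a convex ball. In your version, fix the common $\eta$ before choosing $\chi$, since the constancy window $[-\eta,-\eta/2]$ moves with $\eta$.
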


We will now establish lifts for the extensions constructed in \Cref{timedepend}. For this we need connections both on the tangent bundle and on the anchored bundle. We introduce the following assumption:

\begin{hyp}\label{H} 
 A Banach Lie algebroid  $(A,\pi,M,\rho, \LB_A)$  satisfies the assumption {\bf  (H)} if there exists a linear connection on $A$ and on $TM$ and $M$ is regular as a topological space.
\end{hyp}
The following technical result establishes Theorem A from the introduction.
\begin{prop}\label{timedependent2}
 Let $(A,\pi,M,\rho,\LB_A)$ be a Banach Lie algebroid which satisfies Assumption {\rm \textbf{(H)}}. For $k\geq 2$ fix a $C^k$-map $\g\colon [0,1]^2 \rightarrow M$ together with $J:=[-\eta,1+\eta]$ and an extension $\hat{\g}\colon [0,1] \times J \rightarrow M$ as in \Cref{timedepend}.
 \begin{enumerate}
 \item For $s \in [0,1]$ there is $\delta_s >0$ and an open neighborhood $\mathcal{W}_s\subseteq J \times M$ of the graph of $\hat{\g}_\varepsilon$ for every $\varepsilon\in I_s := [0,1] \cap [s-\delta_s,s+\delta_s]$.
 
 Then for every  $C^\ell$-map ${c}\colon [0,1]^2\to A$, $\ell \leq k$, with $\g=\pi\circ c$, there is $C^{\ell,\infty}$-map ${\mathfrak{u}^s} \colon I_s \times \mathcal{W}_s \rightarrow A$, such that $\pi \circ \mathfrak{u}^s(\varepsilon , t,y)=y$ and  $\mathfrak{u} (\varepsilon, t,\g_\varepsilon(t))=c_\varepsilon(t)$ for $(\varepsilon, t)\in I_s \times [0,1]$.
 
 Further, we can choose $\mathfrak{u}_\varepsilon$ constant on $\left(\left([-\eta, -\frac{\eta}{2}] \cup [1+\frac{\eta}{2},1+\eta]\right) \times M\right) \cap \mathcal{W}_s$. If $c_\varepsilon(0)=0$ (or $c_\varepsilon(1)=0$) for $\varepsilon\in I_s$, then also ${\mathfrak{u}_\varepsilon}(0)=0$ (or $
  {\mathfrak{u}_\varepsilon}(1)=0$).
 \item  If $M$ is smoothly paracompact, the section $\mathfrak{u}$ can be defined on $[0,1] \times J\times M$.
  \end{enumerate}
\end{prop}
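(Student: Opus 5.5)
The construction will extend the values $c_\varepsilon(t)$ off the graph by parallel transport along short geodesics emanating from the curve points. First, using the linear connection on $TM$ and \Cref{lem:Spray_from_Koszul}, we obtain the exponential map $\mathrm{Exp}^\nabla$ of $TM$. Near each point it restricts fibrewise to a local diffeomorphism. Since $\hat\g([0,1]\times J)$ is compact, the map $(\varepsilon,t,y)\mapsto (\mathrm{Exp}^\nabla_{\hat\g_\varepsilon(t)})^{-1}(y)$ is defined on a neighbourhood of the graph for $\varepsilon$ near $s$. We obtain this by working in finitely many charts and applying the inverse function theorem with parameters to $(\varepsilon,t,v)\mapsto(\varepsilon,t,\mathrm{Exp}^\nabla(v))$ near the zero section along $\hat\g_s$. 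Regularity of $M$ is used to shrink neighbourhoods so that closed neighbourhoods of the compact graph stay inside the domain. This yields $\delta_s$ and the open set $\mathcal W_s$. Here $\mathcal W_s$ is a tube $\{(t,y): y=\mathrm{Exp}^\nabla(v),\ v\in T_{\hat\g_\varepsilon(t)}M \text{ small}\}$ that works for all $\varepsilon\in I_s$. If a single tube is awkward, we intersect the tubes over $\varepsilon$ using compactness of $I_s$.

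Second, we first extend $c$ in $t$ to $J$ via the Seeley extension used in \Cref{timedepend}. On $[-\eta,-\eta/2]$ we make it constant, equal to $c_\varepsilon(0)$ transported. More precisely, we extend $t\mapsto c_\varepsilon(t)$ as a section along $\hat\g_\varepsilon$. Using parallel transport $P$ along $\hat\g_\varepsilon$ with respect to the connection on $A$, we write $c_\varepsilon(t)=P_{0\to t}\,w(\varepsilon,t)$ with $w$ valued in $A_{\g(\varepsilon,0)}$. We extend $w$ by Seeley and compose with a cutoff in $t$ so that $w$ is constant near the ends. The same applies at the end $t=1$. Then we define
\[ \mathfrak u^s(\varepsilon,t,y)=P^{A}_{\sigma}\bigl(\hat c_\varepsilon(t)\bigr),\]
where $\sigma(r)=\mathrm{Exp}^\nabla(r v)$, $r\in[0,1]$, is the radial geodesic from $\hat\g_\varepsilon(t)$ to $y$, with $v=(\mathrm{Exp}^\nabla_{\hat\g_\varepsilon(t)})^{-1}(y)$. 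Parallel transport solves a linear ODE whose coefficients are the Christoffel components evaluated along $\sigma$. These coefficients depend smoothly on $y$ and $C^k$ on $(\varepsilon,t)$. Hence the parameter-dependent ODE theory of \Cref{setup:flowsCrs} and the chain rules \ref{Crs_chainrules} give $C^{\ell,\infty}$ regularity in $((\varepsilon,t),y)$. At $y=\g_\varepsilon(t)$ we have $v=0$, so $\mathfrak u^s(\varepsilon,t,\g_\varepsilon(t))=c_\varepsilon(t)$. Linearity of transport gives zero extensions where $c_\varepsilon(0)=0$, and constancy near the ends follows because the endpoint curves are constant there.

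For part 2, we cover $[0,1]$ by finitely many $I_{s_i}$. If $M$ is smoothly paracompact, we take a smooth partition of unity on $J\times M$ (or on $M$) subordinate to the cover $\{\mathcal W_{s_i}\}$ together with the complement of the compact graph. We also take a partition of unity in $\varepsilon$ subordinate to $\{I_{s_i}\}$. Then we glue the local extensions as $\sum_i \phi_i(\varepsilon)\chi(t,y)\mathfrak u^{s_i}$. The interpolation property is preserved since it is affine-linear in the fibres. The main obstacle is the first step: uniform invertibility of the exponential along a compact family of curves in a non-paracompact Banach manifold. In particular, the regularity of $(\varepsilon,t,y)\mapsto v$ is only $C^k$ in the parameters and must be checked to be of class $C^{\ell,\infty}$ via the parameter-dependent inverse function theorem.
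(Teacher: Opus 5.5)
Your part~1 follows essentially the paper's route: parallel transport in $A$ along the radial $\nabla$-geodesic from $\hat\g_\varepsilon(t)$ to $y$, on a graph neighbourhood obtained by compactness. Part~2, however, has a genuine gap.

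\textbf{The gluing in part~2 does not preserve interpolation.} Suppose $\chi$ in $\sum_i\phi_i(\varepsilon)\chi(t,y)\mathfrak u^{s_i}$ stands for the members $\chi_i$ of a partition of unity subordinate to $\{\mathcal W_{s_i}\}$ together with the complement of the graph. Then at $y=\hat\g_\varepsilon(t)$ you get $\bigl(\sum_i\phi_i(\varepsilon)\chi_i(t,\hat\g_\varepsilon(t))\bigr)\hat c(\varepsilon,t)$. This coefficient is not $1$ in general: $\phi_1=\phi_2=\tfrac12$ with $\chi_1=1$, $\chi_2=0$ at a point gives $\tfrac12$. Affine-linearity in the fibres only helps once the weights sum to one. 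If instead $\chi$ is a single function equal to $1$ on all graphs, then $\chi\,\mathfrak u^{s_i}$ is undefined outside $\mathcal W_{s_i}$.

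What is needed are $i$-dependent cutoffs $\chi_i\in C^\infty(J\times M,[0,1])$ with two properties. First, $\chi_i\equiv1$ on a neighbourhood of the compact set $K_i=\{(t,\hat\g_\varepsilon(t)):\varepsilon\in\operatorname{supp}\phi_i,\ t\in J\}$. Second, $\operatorname{supp}\chi_i\subseteq\mathcal W_{s_i}$; such $\chi_i$ exist by smooth normality of $J\times M$. Then $\phi_i(\varepsilon)\neq0$ forces $\chi_i(t,\hat\g_\varepsilon(t))=1$, and the weights add up to $\sum_i\phi_i=1$.

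In addition, any cutoff depending on $t$ destroys the constancy on $[-\eta,-\tfrac\eta2]\cup[1+\tfrac\eta2,1+\eta]$ that you established in part~1, and you must restore it. The paper does this by precomposing in $t$ with a smooth $r\colon J\to[0,1]$ that equals $0$ (resp.\ $1$) on the end intervals and the identity on $[0,1]$.

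\textbf{Part~1 needs care in two places.} First, local inverse function theorems in finitely many charts do not by themselves define $(\varepsilon,t,y)\mapsto(\mathrm{Exp}^\nabla_{\hat\g_\varepsilon(t)})^{-1}(y)$. You must ensure the local inverses agree on overlaps, i.e.\ that $\Theta=(\pi_M,\mathrm{Exp}^\nabla)$ is injective on a neighbourhood of the zero section over the compact image. The paper obtains this from a local addition on a metrisable neighbourhood of the image (\Cref{lem:paracpt-nbhd_cmpt.set}, \Cref{adapcon}); a direct compactness argument also works. Once you invert the smooth, parameter-free map $\Theta$ and compose with $(\hat\g_\varepsilon(t),y)$, $C^{k,\infty}$-regularity follows from the chain rules in \ref{Crs_chainrules}. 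The ``main obstacle'' you name then disappears.

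Second, extending $c$ by parallel transport along $\hat\g_\varepsilon$ itself is a detour with a hidden cost. The transport ODE has coefficients involving $\partial_t\hat\g$, which is only $C^{k-1}$. So for $\ell=k$ you would need an extra argument, such as a curvature variation formula, to see that $w$ and $\hat c$ remain $C^k$. The paper avoids this by extending only near the endpoints inside trivialisations and leaving $c$ unchanged in the middle.
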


\begin{proof}
Choose bundle trivializations $\Psi_0:A_{U_0}\to U_0\times\mathbb A, \Psi_1\colon A_{U_1}\to U_1\times\mathbb A$, around $\gamma(0,0)$ and $\gamma(0,1)$, respectively. By continuity of $\hat{\gamma}$ we can pick $\t_0>0$ such that $ \widehat\gamma \bigl( [0,1]\times[-\eta,\tau_0] \bigr) \subseteq U_0$.
In the trivialization $\Psi_0$, write $ \Psi_0(c(\varepsilon,t)) = \bigl( \gamma(\varepsilon,t), a_0(\varepsilon,t) \bigr)$, $(\varepsilon,t)\in[0,1]\times[0,\tau_0]$, where $a_0\colon [0,1]\times[0,\tau_0]\to\mathbb A$ is of class $C^\ell$. Now apply the argument\footnote{note: $c$ does not need to have constant in $\varepsilon$ endpoints, however, as the basepoints enable us to work in one trivialisation, the arguments apply with the obvious modifications} from the proof of \Cref{timedepend} to obtain an extension (after possibly shrinking $\tau$) $\widetilde a_0\colon [0,1]\times[-\eta_0,\tau_0]\to\mathbb A$ of $a_0$ and constant in $t$ for a neighborhood of $-\eta_0$ (where $0<\eta_0<\eta$. Now 
\[ \widehat c_0(\varepsilon,t) := \Psi_0^{-1} \bigl( \widehat\gamma(\varepsilon,t), \widehat a_0(\varepsilon,t) \bigr), \qquad (\varepsilon,t)\in [0,1]\times[-\eta_0,\tau_0]. \]
is a $C^\ell$-map which satisfies $\pi\circ\widehat c_0=\widehat\gamma$, and agrees with $c$ on $[0,1]\times[0,\tau_0]$. It is constant on a neighborhood of $-\eta$. The construction at the right endpoint is analogous. Gluing the two endpoints maps with $c$ and shrinking $\eta$ one more time if necessary the piecewise definition 
\[ \widehat c(\varepsilon,t) := \begin{cases} \widehat c_0(\varepsilon,t), & -\eta_0\leq t\leq\tau_0, \\ c(\varepsilon,t), & \tau_0\leq t\leq\tau_1, \\ \widehat c_1(\varepsilon,t), & \tau_1\leq t\leq1+\eta_1 \end{cases} \] 
is well defined and of class $C^\ell$. By construction we have then $\hat{c}(\varepsilon,t)=c(\varepsilon,0)$ for $t<-\eta/2$. A similar identity holds on the right boundary.
We may always assume that $\eta_1,\eta_0$ were smaller than $\eta$, but as the curve is constant on a small interval around the end-points, extending it by a constant we may assume that both extensions of $\g$ and $c$ exist on the same interval. For $J=[-\eta,1+\eta]$ set 
\[K:= \hat{\g}([0,1]\times J), \qquad K_s:=\hat{\g}(\{s\}\times J), s \in [0,1].\]

\paragraph{Step 1: Preparation around the compact image}
As $K$ is compact.  Applying \Cref{lem:paracpt-nbhd_cmpt.set} we may work in a paracompact neighborhood of the image and thus assume without loss of generality that $M$ is paracompact. Thus \Cref{adapcon} (or rather its proof) implies that $\Sigma :=  \text{Exp}^\nabla$ is a local addition on a suitable neighborhood $\Omega$ of the $0$-section in $TM$.  As in \cite{KS25} we may then assume that $\Omega \cap T_xM$ is star shaped for every $x$ and $\mathcal{O}:=(\pi_M, \text{Exp}^\nabla)(\Omega)$ is an open neighborhood of the diagonal in $M\times M$. Let $\Theta:=(\pi_M,\Sigma) \colon \Omega \rightarrow \mathcal{O}$ be the diffeomorphism induced by $\Sigma$.

\paragraph{Step 2: Preparation in a local chart.}
 Let $(W,\kappa)$ be a chart of $M$ such that $x_\star \in K_s \cap W$, $\kappa (W)=B_5^E (0)$ (the norm ball in the model space $E$ of radius $5$ and $\kappa(x_\star) =0$. Note that there exists an open neighborhood $\Omega_\kappa \subseteq T\kappa (TW)$ of the zero section on which the map $\theta_\kappa := \kappa \circ \text{Exp}^\nabla \circ T\kappa^{-1}|_{\Omega_\kappa}$ makes sense. Hence we obtain a local addition on $B_5^E(0)$ via 
\begin{align*}
\Sigma_\kappa := (\kappa \times \kappa) \circ \Sigma \circ T\kappa^{-1}|_{\Omega_\kappa} = (\text{pr}_1, \theta_\kappa) \colon \Omega_\kappa \rightarrow B_5^E(0)\times B_5^E(0)   
\end{align*}
We find $R>0$ and $S>0$ such that for $P:= B_R^E(0)$ we have $P\times B_S^E(0) \subseteq \Omega_\kappa$. Then apply the quantitative inverse function theorem \cite[Proposition 2.1]{Glo06} to $ \theta_\kappa|_{P\times B_S^E(0)} \colon P \times B_S^E(0) \rightarrow B_5^E(0)$ with $P$ interpreted as the parameter. We obtain $0<r_{x_\star},s_{x_\star}<\min\{R,S\}$ such that 
\[y+B_{r_{x_\star}}^E (0) \subseteq \theta_\kappa(\{y\} \times B_{s_{x_\star}}^E(0)),\quad \forall y \in B_{r_{x_\star}}(0).\]
We obtain thus for every $x_\star$  neighborhood $W_{x_\star}:=\kappa^{-1}(B_{r_{x_\star}}^E(0))$ such that $U_{x_\star}:= T\kappa^{-1}(B_{r_{x_\star}}^E (0) \times B_{s_{x_\star}}^E) \subseteq \Omega \cap TW_{x_\star}$ and for every pair $(x,y) \in W_{x_\star}^2$ there is a unique $\nabla$-geodesic from $x$ to $y$.

\paragraph{Step 3: A graph neighbourhood.}
Put $J_- :=[-\eta,-\eta/2]$ and $J_+:=[1+\eta/2,1+\eta]$. By construction of $\widehat\gamma$, there are $x_\pm\in M$ such that $\widehat\gamma_\varepsilon(t)=x_\pm $ for $(t,\varepsilon)\in J_\pm \times [0,1]$. Choose open neighbourhoods $P_\pm$ of $x_\pm$ such that $P_\pm\times P_\pm\subseteq\mathcal O$ together with relatively open intervals $\hat{J}_\pm^\circ \subseteq J\) containing $\overline{J}_\pm$, respectively, such that $\widehat\gamma_s(J_\pm^\circ)\subseteq P_\pm$

Let now $J_M \subseteq J$ be a compact interval such that $J=\hat{J}_-\cup J_M\cup \hat{J}_+$.
For every $x_\star\in K_s$, Step 2 provides open neighbourhoods $W_{x_\star}^0 \subseteq \overline{W_{x_\star}^0}\subseteq W_{x_\star}^1$ such and every pair of points in $W_{x_\star}^1$ is joined by the unique $\nabla$-geodesic determined by the local addition. Equivalently, $W_{x_\star}^1\times W_{x_\star}^1 \subseteq\mathcal O$. 
For every $t_0\in J_M$, choose $x_\star(t_0)\in K_s$ such that $\widehat\gamma_s(t_0) \in W_{x_\star(t_0)}^0$. By continuity of $\widehat\gamma_s$, there is a relatively open interval $t_0 \in L_{t_0}\subseteq J$ such that $\widehat\gamma_s(\overline{L_{t_0}}) \subseteq W_{x_\star(t_0)}^0$. Choose a finite cover $J_M=\bigcup_{i=1}^N L_i$ and set $W_i^0:=W_{x_\star(t_i)}^0, W_i^1:=W_{x_\star(t_i)}^1$. 

For every $i$, we have $\widehat\gamma_s(\overline{L_i}) \subseteq W_i^0 \subseteq W_i^1$. Then $\widehat\gamma^{-1}(W_i^1)$ is an open neighbourhood of $\{s\}\times\overline{L_i}$ in $[0,1]\times J$. Using compactness of $\overline{L_i}$ and Wallace Lemma \cite[3.2.10]{Eng89} there is a relatively open neighbourhood $I_{s,i}$ of $s$ such that $\widehat\gamma_\varepsilon(t)\in W_i^1$ for all $(\varepsilon,t)\in I_{s,i}\times\overline{L_i}$.
 Choose $\delta_s>0$ such that $I_s := [s-\delta_s,s+\delta_s]\cap[0,1] \subseteq \bigcap_{i=1}^N I_{s,i}$. After shrinking $\delta_s$, continuity of $\widehat\gamma$ also ensures $\widehat\gamma_\varepsilon(t)\in P_\pm$ for  $\varepsilon\in I_s,\ t\in J_\pm^\circ$, 
 Define 
 \[ \mathcal W_s := \hat{J}_-^\circ \times P_-\cup \bigcup_{i=1}^N L_i\times W_i^1 \cup \hat{J}^\circ_+ \times P_+ \subseteq J \times M. \]
 Then $\mathcal W_s$ is an open neighbourhood of the graphs of $\widehat\gamma_\varepsilon$ for all $\varepsilon \in I_s$. For every $(\varepsilon,t,y)\in I_s \times \mathcal W_s\), one has $\bigl( \widehat\gamma_\varepsilon(t),y \bigr)\in\mathcal O$. This follows from $P_\pm\times P_\pm\subseteq\mathcal O$ on the enpoint intervals and from $(W_i^1)^2\subseteq\mathcal O$ on each middle piece.
 
\paragraph{Step 4: Construction of the extension.} 
For $\varepsilon\in I_s, (t,y)\in\mathcal W_s$, by Step~3 $(\widehat\gamma_\varepsilon(t),y)\in \mathcal{O}$. Hence there is a unique vector $\xi_{\varepsilon,t,y} \in \Omega \cap T_{\widehat\gamma_\varepsilon(t)}M$ such that $\Theta(\xi_{\varepsilon,t,y}) = \bigl( \widehat\gamma_\varepsilon(t),y \bigr)$. Equivalently, $\operatorname{Exp}^{\nabla}_{\widehat\gamma_\varepsilon(t)} \bigl( \xi_{\varepsilon,t,y} \bigr) = y$. Since the fibres of $\Omega$ are star-shaped, the curve 
\[ \varsigma_{\varepsilon,t,y}\colon [0,1]\to M, \qquad \varsigma_{\varepsilon,t,y}(r) := \operatorname{Exp}^{\nabla}_{\widehat\gamma_\varepsilon(t)} \bigl( r\xi_{\varepsilon,t,y} \bigr), \] 
is well defined and the unique $\nabla$-geodesic with $\varsigma_{\varepsilon,t,y}(0) = \widehat\gamma_\varepsilon(t),\varsigma_{\varepsilon,t,y}(1)=y$. Let $\operatorname{Pt}^{A}_{\varsigma_{\varepsilon,t,y}} \colon A_{\widehat\gamma_\varepsilon(t)} \to A_y$ denote parallel transport along $\varsigma_{\varepsilon,t,y}$ with respect to the chosen linear connection on $A$. Define 
\[ \mathfrak{u}^s(\varepsilon,t,y) := \operatorname{Pt}^{A}_{\varsigma_{\varepsilon,t,y}} \bigl( \widehat c(\varepsilon,t) \bigr), \qquad \varepsilon\in I_s,\quad (t,y)\in\mathcal W_s.\]
Then $\pi\bigl(\mathfrak{u}^s(\varepsilon,t,y)\bigr)=y$, so, for every fixed $(\varepsilon,t)$, $y\mapsto \mathfrak{u}^s(\varepsilon,t,y)$ is a local section of $A$. 
The map $I_s\times \mathcal{W}_s \rightarrow \Omega,\quad (\varepsilon,t,y) \mapsto  \xi_{\varepsilon,t,y}= \Theta^{-1}\bigl( \widehat\gamma_\varepsilon(t),y \bigr)$
is of class $C^{\ell,\infty}$ (cf. Step 2 and \cite[Theorem 2.3 (c)]{Glo06}). We are a bit sloppy in the notation $C^{\ell,\infty}$ means with respect to the variable splitting $((t,s),x)$. Now parallel transport is smooth in its parameters. This follows from the fact that it is the solution of an ordinary differential equation whose right hand side is smooth in these parameters cf.\ \cite[Proposition 4.1 and its proof]{KS25}. We deduce that $\mathfrak{u}^s$ is a $C^{\ell,\infty}$-map on its domain.

If $y=\widehat\gamma_\varepsilon(t)$, then $\Theta^{-1} \bigl( \widehat\gamma_\varepsilon(t), \widehat\gamma_\varepsilon(t) \bigr) = 0_{\widehat\gamma_\varepsilon(t)}$. Hence $\varsigma_{\varepsilon,t, \widehat\gamma_\varepsilon(t)}$ is the constant curve at $\widehat\gamma_\varepsilon(t)$ and on this curve parallel transport is the identity. We deduce
$\mathfrak{u}^s \bigl( \varepsilon,t,\widehat\gamma_\varepsilon(t) \bigr) = \widehat c(\varepsilon,t)$. In particular, $\mathfrak{u}^s \bigl( \varepsilon,t,\gamma_\varepsilon(t) \bigr) = c(\varepsilon,t)$ for $(\varepsilon,t)\in I_s\times[0,1]$. 
For $t\in[-\eta,-\eta/2]$, the extensions satisfy \[ \widehat\gamma_\varepsilon(t)=\gamma_\varepsilon(0), \qquad \widehat c(\varepsilon,t)=c_\varepsilon(0). \]
Here we use that at the cylindrical inclusions $J_\pm\times P_\pm \subseteq \mathcal{W}_s$ ensure that the identities holds as equalities of sections on $P_\pm$.
Hence, whenever $(t,y)\in\mathcal W_s$, the vector $\xi_{\varepsilon,t,y} = \Theta^{-1} \bigl( \gamma_\varepsilon(0),y \bigr)$ and the corresponding geodesic are independent of $t$. It follows that $\mathfrak{u}^s(\varepsilon,t,y)$ is independent of $t$ on the left endpoint interval, wherever defined. The same argument applies on $[1+\eta/2,1+\eta]$. If $c_\varepsilon(0)=0_{\gamma_\varepsilon(0)}$, then linearity of parallel transport gives $\mathfrak{u}^s(\varepsilon,t,y)=0_y$ on the left endpoint interval. The corresponding statement holds at $t=1$.

\paragraph{Proof of part 2.}From the first part we obtain for every $s \in [0,1]$ an open set $\mathcal{W}_s \subseteq J \times M$ containing the graph of all $\hat{\g}_\varepsilon$ for $\varepsilon \in I_s$ and a $C^{\ell,\infty}$-section $\mathfrak{u}^s \colon I_s \times \mathcal{W}_s \rightarrow A$ such that $\mathfrak{u}^s \bigl( \varepsilon,t,\widehat\gamma_\varepsilon(t) \bigr) = \widehat c(\varepsilon,t)$ for the extension of the $C^\ell$ map $c \colon [0,1]^2 \rightarrow A$,

By compactness, there are finitely many $s_i \in [0,1], 1\leq i \leq n$ and $\overline{J}_i \subseteq I_{s_i}$ such that $[0,1] \subseteq \bigcup_{i=1}^N J_i$. Pick a partition of unity $\lambda_i \in C^\infty ([0,1],[0,1]), 1\leq i \leq N$ subordinate to the covering $\{J_i\}$ of $[0,1]$.

For every $i$, define the compact subset 
\[ K_i := \left\{ \bigl( t,\widehat\gamma_\varepsilon(t) \bigr): \varepsilon\in\operatorname{supp}\lambda_i,\; t\in J \right\} \subseteq J\times M. \]
By Part~1 we have $K_i\subseteq\mathcal W_{s_i}$.
Since $J\times M$ is smoothly normal (this follows e.g. from \cite[Corollary 16.17]{KrMi97} as both spaces are smoothly paracompact and metrisable), we can choose $\chi_i\in C^\infty(J\times M,[0,1]) $ such that $ \chi_i=1$ on a neighbourhood of $K_i$ and $\operatorname{supp}\chi_i \subseteq\mathcal W_{s_i}$.

For $\varepsilon\in I_{s_i}$, define 
\[ \overline{\mathfrak{u}}^i_\varepsilon(t,y) := \begin{cases} 
\chi_i(t,y)\, \mathfrak{u}^{s_i}(\varepsilon,t,y), & (t,y)\in\mathcal W_{s_i}, \\
0_y, & (t,y)\notin\mathcal W_{s_i}. 
\end{cases}\] 
Since $\operatorname{supp}\chi_i \subseteq\mathcal W_{s_i}$, the map is a well-defined $C^{\ell,\infty}$-section on $J\times M$. Now define 
\[
\widetilde{\mathfrak{u}}^{i}(\varepsilon,t,y) := \begin{cases} \lambda_i(\varepsilon)\, \overline{\mathfrak{u}}^{i}_\varepsilon(t,y), & \varepsilon\in I_{s_i}, \\ 0_y, & \varepsilon \notin I_{s_i}. \end{cases}
\] 
By construction of the partition of unity, the map is again $C^{\ell,\infty}$ across the boundary of $I_{s_i}$. Set 
$\hat{\mathfrak{u}}(\varepsilon,t,y) := \sum_{i=1}^N \widetilde{\mathfrak{u}}^{i}(\varepsilon,t,y)$. 
This is a finite sum of $C^{\ell,\infty}$-sections and therefore a $C^{\ell,\infty}$-section on $[0,1]\times J\times M$. If $\lambda_i(\varepsilon)\neq0$, then $\bigl( t,\widehat\gamma_\varepsilon(t) \bigr)\in K_i$, and hence, $\chi_i \bigl( t,\widehat\gamma_\varepsilon(t) \bigr) = 1$. Consequently, 
\[\hat{\mathfrak{u}} \bigl( \varepsilon,t,\widehat\gamma_\varepsilon(t) \bigr) = \sum_{i=1}^N \lambda_i(\varepsilon) u^{s_i} \bigl( \varepsilon,t,\widehat\gamma_\varepsilon(t) \bigr)  = \left( \sum_{i=1}^N\lambda_i(\varepsilon) \right) \widehat c(\varepsilon,t) = \widehat c(\varepsilon,t). \]
It remains to arrange the properties at the ends of the extended interval. 
\[\text{Choose } r\in C^\infty (J,[0,1]) \text{ such that } \begin{cases} r(t)=0  & t\in[-\eta,-\eta/2], \\ r(t)=t & t\in[0,1],\\ r(t)=1 &t\in[1+\eta/2,1+\eta].\end{cases}\]
Setting $\mathfrak{u}(\varepsilon,t,y) := \hat{\mathfrak{u}} \bigl( \varepsilon,r(t),y \bigr)$, we obtain a $C^{\ell,\infty}$-section.
Although the cutoffs $\chi_i$ may destroy the constancy in $t$ of the local extensions at the ends, the reparametrization $r$ restores this property without changing the family on $[0,1]$.
By construction $\mathfrak{u}$ lifts $c$ and for $t\in[-\eta,-\eta/2]$ one has with $\widehat\gamma_\varepsilon(t) = \gamma_\varepsilon(0), \widehat c(\varepsilon,t) = c(\varepsilon,0)$, i.e. $\mathfrak{u} \bigl( \varepsilon,t,\widehat\gamma_\varepsilon(t) \bigr) = c(\varepsilon,0) = \widehat c(\varepsilon,t)$. Moreover, the section is independent of $t$ on this interval. The same argument applies on $[1+\eta/2,1+\eta]$. Finally, if $c_\varepsilon(0)=0_{\gamma_\varepsilon(0)}$, then every local extension from Part~1 vanishes at $t=0$. Hence the same holds for $\mathfrak{u}$ on the boundary intervals. 
\end{proof}

\subsection{Linear vector fields on Banach bundles}\label{LinearVectorFields}
Following \cite{Mac05}  we  consider the notion of linear vector fields:

\begin{defi}\label{linE}  Let $(E,\pi,M)$ be a Banach bundle and $U$ be an open set of $M$. A \emph{linear local}  vector field on $E$ is a pair $(\mathcal{X},X)$ where $\mathcal{X}$ is a vector field on $E_{U}$ and $X$ a vector field on $U$ such that
\begin{equation}\label{XX}
\begin{tikzcd}
E_U \arrow[r,"\mathcal{X}"] \arrow[d,"\pi"]& TE_U \arrow[d,"T\pi"] \\ U \arrow[r,"X"] & TU
\end{tikzcd}
\end{equation}
is a bundle morphism. When $U=M$ the pair $(\mathcal{X},X)$ is simply called a \emph{linear vector field} on $E$.
\end{defi}

The set of linear vector fields defined over an open set $U$ is {\it stable by linear combinations}.
In a bundle trivialisation $TE_U \cong TU \times F \times F$ we write $\bullet_{T\pi}$ for vector operations in the $T\pi$-fibre. Then a linear vector field $(\mathcal{X},X)$ satisfies (cf. \cite[p.110 ff.]{Mac05}) 
\begin{align}\label{prolong_linear}\mathcal{X}(x,e_1+\lambda e_2, f_1+\lambda f_2)=\mathcal{X}(x,e_1,f_1) +_{T\pi} \lambda \cdot_{T\pi} \mathcal{X}(x,e_2,f_2).
\end{align}
In the following we will need the dual bundle of $A\rightarrow M$. We denote the dual bundle by $A^\ast \rightarrow M$ and will use that for local sections $\xi \in \Gamma(A_U)$ and $\sigma (A_U^\ast)$ there is a canonical pairing between $A$ and $A^\ast$ given by $$\langle \sigma (x),\xi(x)\rangle := \sigma (x)(\xi(x)), x\in U.$$
Note that this canonical pairing does not imply that there is a bundle isomorphism between $A$ and the dual bundle.

\begin{defi}
Let $\pi \colon E\to M$ be a Banach bundle and $\pi_{*}\colon E^*\to M$ it's associated dual Banach bundle. If $U$ is an open set of $M$, for any $ \s\in \G(A^*_{U})$ let $$\Phi^\s\colon E_{U}\to \R, \quad \Phi^\s(\mathfrak{u})=\langle\s\circ \pi(\mathfrak{u}), \mathfrak{u}\rangle$$ for any $\mathfrak{u}\in E_{U}$. Dually, for any $\mathfrak{u}\in \G(E_{U})$, we obtain a map $$\Phi_\mathfrak{u} \colon E_U^\ast \rightarrow \R, \quad  \Phi_\mathfrak{u}(\xi):=\langle\xi,\mathfrak{u}\circ \pi^\ast (\xi)\rangle.$$
Both $\Phi^\sigma$ and $\Phi_\mathfrak{u}$ are by construction fibre-wise linear.
 \end{defi}
\begin{setup}
We denote by $\mathcal{F}(E_{U})$ the ring of smooth functions on $E_{U}$. A function $f\in {\cal F}(E_{U})$ is called \emph{linear}, if the restriction of $f$ to each fiber $E_x=\pi^{-1}(x), x\in U$ is a linear map. Finally, we let $\mathcal{F}_L(E_{U})$ be the subset of linear  functions  of type $\Phi^\s,$ for all $\s\in \G(A^*_{U})$. We set $\mathcal{F}_\pi(E_{U})=\{f\circ \pi, f\in \mathcal{F}(U)\}$. Dually we have the same spaces for $E^\ast_U$, with the obvious modifications (i.e. $\mathcal{F}_L(E_{U}^\ast)$ is generated by the function $\Phi_\mathfrak{u}$ for $\mathfrak{u}\in \Gamma(E_U)$.
\end{setup}

\begin{prop}\label{chalinX}Let $\mathcal{X}$ be  a vector field on $E_{U}$.  The following properties are equivalent:
\begin{enumerate}
\item[{\rm (i)}] There exists a vector field $X$ on $U$ such that  $(\mathcal{X},X)$ is a linear  local vector field on $E$.
\item[{\rm (ii)}]  When we consider $\mathcal{X}$ as a map from $\mathcal{F}(E_{U})$ to $\mathcal{F}(E_{U})$ then $\mathcal{X}(\mathcal{F}_L(E_{U}))\subset \mathcal{F}_L(E_{U})$ and $\mathcal{X}(\mathcal{F}_\pi(E_{U}))\subset \mathcal{F}_\pi(E_{U})$.
\item[{\rm (iii)}] The local flow $\mathfrak{F}_{\mathcal{X}}^t$ of $\mathcal{X}$ is a local bundle isomorphism for $E$ over the flow $\Fl_X^t$ of  $X$.
\end{enumerate}
\end{prop}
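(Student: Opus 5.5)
I will prove the cycle (i)$\Rightarrow$(iii)$\Rightarrow$(ii)$\Rightarrow$(i), working throughout in a local trivialisation $E_U\equiv U\times\mathbb{E}$ as in \Cref{loctriv}. In these coordinates, the statement that $(\mathcal{X},X)$ is a bundle morphism in the sense of \Cref{linE} reads as follows: $\mathcal{X}(x,e)=(x,e,X(x),L_x e)$ with $x\mapsto L_x\in L(\mathbb{E},\mathbb{E})$ smooth. The first component of the tangent part must be $X(x)$, independent of $e$, because $T\pi\circ\mathcal{X}=X\circ\pi$. The second component is linear in $e$ by \eqref{prolong_linear}. This normal form is invariant under the change of trivialisation formula of \Cref{nablaH}, so every local computation globalises.

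\textbf{(i)$\Rightarrow$(iii).} The integral curves of $\mathcal{X}$ solve the system $\dot x=X(x)$, $\dot e=L_x e$. The first equation is decoupled, so the base component of $\mathfrak{F}^t_{\mathcal{X}}$ is $\Fl^t_X$. Once the base curve is fixed, the second equation is a linear ODE in the Banach space $\mathbb{E}$ with operator-valued coefficient $t\mapsto L_{\Fl^t_X(x)}$. Its solution operator $T(t,x)\in \mathrm{GL}(\mathbb{E})$ exists on the whole time interval where $\Fl^t_X(x)$ is defined, and it is given by the usual Dyson series or Gronwall argument in operator norm.

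This step needs some care, and I expect it to be the main technical obstacle. The local flow of $\mathcal{X}$ is a priori only defined near each point of $E$. The point is that linearity of the ODE prevents blow-up in the fibre direction. Hence the flow domain of $\mathcal{X}$ is exactly $\pi^{-1}$ of the flow domain of $X$, and $\mathfrak{F}^t_{\mathcal{X}}(x,e)=(\Fl^t_X(x),T(t,x)e)$. This is a fibrewise linear isomorphism covering $\Fl^t_X$, with inverse given by the backward flow. Smoothness in $x$ follows from smooth dependence on parameters, as in \Cref{setup:flowsCrs}.

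\textbf{(iii)$\Rightarrow$(ii).} Since $\mathfrak{F}^t$ covers $\Fl^t_X$, we have $(f\circ\pi)\circ\mathfrak{F}^t=(f\circ\Fl^t_X)\circ\pi$. Differentiating at $t=0$ gives $\mathcal{X}(f\circ\pi)=(X f)\circ\pi\in\mathcal{F}_\pi(E_U)$. For $\Phi^\sigma$, fibrewise linearity of $\mathfrak{F}^t$ gives $\Phi^\sigma\circ\mathfrak{F}^t=\Phi^{\sigma_t}$, where $\sigma_t(x)=T(t,x)^\ast\sigma(\Fl^t_X x)$. Differentiating at $t=0$ yields $\mathcal{X}\Phi^\sigma=\Phi^{\sigma'}$ with $\sigma'(x)=d\sigma(X(x))+L_x^\ast\sigma(x)$ in the chart, which is a smooth local section of $E^\ast$. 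Because $T$ is only smooth into $L(\mathbb{E})$ with the operator norm, I will check the derivative in the dual-bundle chart directly rather than via weak-$^\ast$ arguments.

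\textbf{(ii)$\Rightarrow$(i).} Write $\mathcal{X}(x,e)=(x,e,Y(x,e),Z(x,e))$. Testing against $f\circ\pi$ with $f$ ranging over the chart coordinate functionals $\lambda\circ\kappa$, $\lambda\in\mathbb{M}'$, gives $\lambda(Y(x,e))=g_\lambda(x)$, independent of $e$. By Hahn–Banach, $Y(x,e)=Y(x,0)=:X(x)$. Localising is legitimate here because the conditions are on $\mathcal{F}(E_{U})$ for every open $U$; where bump functions are unavailable, I will work chartwise on smaller $U$.

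Similarly, for constant local sections $\sigma\equiv\alpha\in\mathbb{E}'$ we get $\Phi^\alpha(x,e)=\alpha(e)$. Then $\mathcal{X}\Phi^\alpha=\alpha(Z(x,e))$ must equal $\Phi^{\sigma'}(x,e)$, which is linear in $e$. Therefore $\alpha\circ Z(x,\cdot)$ is linear and continuous for every $\alpha$, so $Z(x,\cdot)$ is linear; continuity follows since $Z$ is smooth. This gives the normal form above, hence $(\mathcal{X},X)$ is a bundle morphism and (i) holds.
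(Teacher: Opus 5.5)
Your proof is correct and uses the same ingredients as the paper: the local normal form $\mathcal{X}(x,e)=(x,e,X(x),L_x e)$, Hahn--Banach testing against $f\circ\pi$ and $\Phi^\sigma$ to extract that form from (ii), and the linear evolution equation for the fibre part of the flow. You only traverse the cycle as (i)$\Rightarrow$(iii)$\Rightarrow$(ii)$\Rightarrow$(i) rather than the paper's (i)$\Rightarrow$(ii)$\Rightarrow$(iii)$\Rightarrow$(i): your (ii)$\Rightarrow$(i) is the first half of the paper's (ii)$\Rightarrow$(iii), your (i)$\Rightarrow$(iii) is its second half, and your (iii)$\Rightarrow$(ii) tacitly contains the paper's (iii)$\Rightarrow$(i), since you need $L_x e:=\left.\frac{d}{dt}\right|_{t=0}T(t,x)e$ to be continuous linear in $e$ and smooth in $x$ before writing $\sigma'$.
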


\begin{proof}
Assume that (i) is true, we work in a local trivialisation $E_V \cong V \times \mathbb{E}$ over $V\subseteq U$ open. As we have a linear vector field, \eqref{prolong_linear} yields $\mathcal{X}(y,a)=(y,a,X(y),B(y).a)$ for some $y \mapsto B(y)\in L(\mathbb{A},\mathbb{A})$ smooth. 
Pick $F\in \mathcal{F}_\pi(E_{U})$ with $F=f\circ \pi$ on $V$, then $\mathcal{X}(f\circ \pi) (y,a)=d_x f(X(y))=X(f)\circ \pi $. In other words $\mathcal{X}(\mathcal{F}_\pi (E_U))\subseteq \mathcal{F}_\pi (E_U)$. 
If $F\in \mathcal{F}_L(E_{U})$, i.e. $F=\Phi^\sigma$ for some $\sigma$ (on $V$) we use that 
$d_{(y,a)}\Phi_\sigma(w,b) = \langle d_y\sigma(w),a\rangle + \langle\sigma(y),b\rangle$. 
So in this case 
\begin{align*}
\mathcal{X}(\Phi^\sigma)(y,a)&= d_{(y,a)}\Phi_\sigma \bigl( X(y),B(y).a \bigr) = \left\langle d_y\sigma\bigl(X(y)\bigr), a \right\rangle + \left\langle \sigma(y),B(y).a \right\rangle \\ &= \left\langle d_y\sigma\bigl(X(y)\bigr) + B(y)^T\sigma(y), a \right\rangle = \Phi^{\tilde \sigma} (y,a).
\end{align*}
where $B(y)^T$ is the transpose and $\tilde{\sigma}(y):= d_y\sigma\bigl(X(y)\bigr) + B(y)^T\sigma(y)$. Thus  $\mathcal{X}(\mathcal{F}_L(E_{U}))\subset \mathcal{F}_L(E_{U})$ and so $\textrm{(i)}\Rightarrow\textrm{(ii)}$.\smallskip

Assume that (ii) holds. Since $\mathcal{X}$ preserves $\mathcal{F}_\pi(E_U)$, it projects to a vector field $X$ on $U$(we can test locally with a point separating family of functionals, showing that $T\pi \mathcal{X} (e)$ only depends on $\pi(e)$, whence $X:=T\pi \circ \mathcal{X} \circ 0_E$ is the desired vector field). 
Choose a local trivialization $E_V\cong V\times\mathbb E$.  The local representative of $\mathcal{X}$ has the form $\mathcal{X}(y,a) = \bigl(y,a,X(y),\Xi(y,a)\bigr)$, 
where $\Xi\colon V\times\mathbb E\to\mathbb E$ is smooth. Let $\sigma\colon V\to\mathbb E^*$ be a local section of the dual bundle. Using the definition of $\Phi_\sigma$ we get the following identity for $X(\Phi_\sigma)(y,a)$
\begin{align} 
 d_{(y,a)}\Phi_\sigma \bigl(X(y),\Xi(y,a)\bigr) = \left\langle d_y\sigma\bigl(X(y)\bigr),a \right\rangle + \left\langle \sigma(y),\Xi(y,a) \right\rangle.\label{eq:loc_mX}\end{align}
By assumption, $\mathcal{X}(\Phi_\sigma)$ is fibrewise linear. The first term on the right-hand side of \eqref{eq:loc_mX} is linear in $a$, so $\left\langle\sigma(x),\Xi(x,a)\right\rangle$ is linear for every $\sigma(x)\in\mathbb E^*$. Since the continuous dual $\mathbb E^*$ separates points, $\Xi(x,a)$ must be linear and there is a smooth map $B\colon  V\to L(\mathbb E,\mathbb E)$, with $\Xi(x,a)=B(x).a$. Let $\eta(t)$ be an integral curve of $X$. Using the local representative of $\mathcal{X}$, the fibre component $a_\eta(t)$ of an integral curve of $\mathcal{X}$ over $\eta(t)$ satisfies 
$\dot a_\eta(t)=B(\eta(t)).a_\eta(t)$.  The associated evolution operator $P_{t,s}(x): \mathbb E\to\mathbb E$ is linear and satisfies 
\[ \frac{\partial}{\partial t} P^{t,s}(x) = B\bigl(\Fl^{X}_{t,s}(x)\bigr). P^{t,s}(x), \qquad P_{s,s}(x)=\operatorname{id}_{\mathbb E}. \] 
Its inverse is $P^{s,t}(\operatorname{Fl}_{X}^{t,s}(x))$. Therefore the local flow of $\mathcal{X}$ is given by $\Fl_{\mathcal{X}}^{t,s}(y,a) = \left( \Fl_{X}^{t,s}(y), P^{t,s}(y).a \right)$, and hence restricts to a continuous linear isomorphism $E_y \rightarrow E_{\Fl_{X}^{t,s}(y)}$ on every fibre on which it is defined. This proves (iii). 
\smallskip 

 Assume that $ \textrm{(iii)} $ is true. Clearly we have for the vector field associated to the flow that  $T\pi\circ\mathcal{X}=X\circ \pi$, the diagram \eqref{XX} is commutative and we must have $\pi\circ \mathfrak{F}_{\mathcal{X}}^t=\Fl_X^t\circ \pi$. 
 Let $a $ and $b$  in $E_U$ such that $\pi(a)=\pi(b)=x$.  Since $\mathfrak{F}_\mathcal{X}^t$ is an isomorphism from $E_x$ over $E_{\Fl_X^t(x)}$  we have $\mathfrak{F}_\mathcal{X}^t(a+b)=\mathfrak{F}_\mathcal{X}^t(a)+\mathfrak{F}_\mathcal{X}^t(b)$. 
Therefore we get: 
\[\mathcal{X}(a+b)=\left.\dis\frac{d}{dt}\right|_{t=0} \mathfrak{F}_\mathcal{X}^t(a+b)_=\left.\dis\frac{d}{dt}\right|_{t=0} \mathfrak{F}_\mathcal{X}^t(a)+\left.\dis\frac{d}{dt}\right|_{t=0} \mathfrak{F}_\mathcal{X}^t(b)=\mathcal{X}(a)+\mathcal{X}(b).\]
A similar argument yields $\mathcal{X}(\lambda a)=\lambda\cdot_{T\pi}\mathcal{X}( a), \lambda \in \R$. Thus $ \textrm{(iii)}\Rightarrow \textrm{(i)}$.
 \end{proof}

From Proposition \ref{chalinX} it follows that if $(\mathcal{X}_1, X)$ and $(\mathcal{X}_2, X_2)$ are local linear vector fields defined on an open set $U$  then {\it the Lie bracket $([\mathcal{X}_1,\mathcal{X}_2],[X_1, X_2])$ is also linear vector fields} defined on the open set $U$. 

For any $\s\in \G(A^*_{U})$ we denote again by $\Phi^\s\colon A_{U}\to \R, \Phi^\s(\mathfrak{u})=\langle\s\circ \pi(\mathfrak{u}), \mathfrak{u}\rangle$ for any $\mathfrak{u}\in A_{U}$ and define  
\begin{align}\label{Lsigma}{L}_\mathfrak{u}\s(\mathfrak{v}):=d(\s(\mathfrak{v}))(\rho(\mathfrak{u}))-\s([\mathfrak{u},\mathfrak{v}]_A)
\end{align}
This yields a well-defined local section of $A^*$. In a local trivialization with $r_x$ and $C_x$ representing the anchor and bracket, $\mathsf{v}$ a section, the product rule gives 
\begin{align*}
d_x(\sigma(\mathsf{v}))(r_x(\mathsf{u}(x))) =\left\langle d_x\sigma(r_x(\mathsf{u}(x))),\mathsf{v}(x) \right\rangle+ \left\langle \sigma(x),d_x\mathsf{v}(r_x(\mathsf{u}(x))) \right\rangle. \end{align*}
Inserting in \eqref{Lsigma} and using \eqref{loctrivrbracket}, the terms involving $d_x\mathsf{v}$ cancel, i.e. 
\begin{align*}
(L_{\mathsf{u}}\sigma)(\mathsf{v})(x) =& \left\langle d_x\sigma(r_x(\mathsf{u}(x))),\mathsf{v}(x) \right\rangle+ \left\langle \sigma(x), d_x\mathsf{u}(r_x(\mathsf{v}(x)))- C_x(\mathsf{u}(x),\mathsf{v}(x)) \right\rangle
\end{align*} 
Thus the right-hand side depends only on $\mathsf{v}(x)$, and it is continuous and linear, whence an element $A_x^*$ smoothly depending on $x$. Thus $\Phi^{{L}_\mathfrak{u}\s}\colon A_U\rightarrow \R$ is a well defined linear function. 

The following yields examples of (local) linear vector fields on $A$. It establishes the complete lift from Theorem B of the introduction. Also cf. \Cref{Xivalue}. 

\begin{theo} \label{thm:complete-lift} Let $(A,\pi,M,\rho,\LB_A)$ be a Banach almost Lie algebroid and $\mathfrak{u}\in\Gamma(A_U)$ a local section. Then there exists a unique linear vector field $(\mathcal{X}_\mathfrak{u}, \rho(\mathfrak{u}))$ on $A_U$, such that
\begin{enumerate} 
\item for every local section $\sigma$ of $A^*$ we have 
\begin{align}\label{Xs}
\LXu (\Phi^\sigma)=\Phi^{L_\mathfrak{u}\sigma}, \quad T\pi\circ \LXu = \rho(\mathfrak{u}) \circ \pi.
\end{align}
In local coordinates this field is given by, 
\begin{align} \label{eq:LXu_local}
\LXu(x,\alpha) = \left( x,\alpha, r_x(\mathsf{u}(x)), d_x\mathsf{u}(r_x(\alpha))+C_x(\alpha,\mathsf{u}(x)) \right).
\end{align}
\item The local flow $\mathfrak{F}^t_\mathfrak{u}$ of $\LXu$ is a vector-bundle isomorphism covering $\varphi_t:=\Fl^t_{\rho(\mathfrak{u})}$ the local flow of $\rho(\mathfrak{u})$. For every local dual section $\sigma$ and $x\in U$,
\begin{align}\label{eq:linfield_loc}
L_\mathfrak{u}\sigma = \left. \frac{d}{dt} \right|_{t=0} \left(\mathfrak{F}^t_{\mathfrak{u},x})^T(\sigma\circ\varphi_t) (x)\right). \end{align} 
 where $(\mathfrak{F}_{\mathfrak{u},x}^t)^T\colon A^*_{\varphi_t(x)}\to A^*_x$ is the transpose\footnote{For a continuous linear map $L\colon E\to F$, its transpose is $L^T\colon F^*\to E^*, L^T(\lambda):=\lambda\circ L$.} of $\mathfrak{F}_\mathfrak{u}^t \colon A_x\to A_{\varphi_t(x)}$.
 \item  Using the flows $\mathfrak{F}^t_\mathfrak{u}$ and $\varphi_t$ from 2., for every local section $\mathfrak{v}$, 
\begin{align}\label{eq:bracket_deriv_lin_field}
\left. \frac{d}{dt} \right|_{t=0} (\mathfrak{F}^t_{\mathfrak{u},x})^*\mathfrak{v}(x) = \LB[\mathfrak{u},\mathfrak{v}]_A(x), \quad x\in U
\end{align}
where $(\mathfrak{F}^t_\mathfrak{u})^*\mathfrak{v}(x) = (\mathfrak{F}^{t}_{\mathfrak{u},x})^{-1} \bigl( \mathfrak{v}(\varphi_t(x)) \bigr)$.
 \end{enumerate}
\end{theo}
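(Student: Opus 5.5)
The plan is to construct $\LXu$ directly from the local formula \eqref{eq:LXu_local}, check that it satisfies \eqref{Xs}, and then obtain uniqueness, well-definedness and the flow statements from \Cref{chalinX}.

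\textbf{Step 1: local field and property \eqref{Xs}.} Work in a trivialization $A_V\cong V\times\mathbb A$ and define $\LXu$ by \eqref{eq:LXu_local}. The fibre component is $\alpha\mapsto d_x\mathsf u(r_x(\alpha))+C_x(\alpha,\mathsf u(x))$, which is continuous linear in $\alpha$. So this is a linear vector field covering $r_x(\mathsf u(x))$, i.e. covering $\rho(\mathfrak u)$. For a dual section $\sigma$, the computation in the proof of \Cref{chalinX} gives
\[ \LXu(\Phi^\sigma)(x,\alpha)=\langle d_x\sigma(r_x\mathsf u(x)),\alpha\rangle+\langle\sigma(x),d_x\mathsf u(r_x\alpha)+C_x(\alpha,\mathsf u(x))\rangle. \]
Using skew-symmetry $C_x(\alpha,\mathsf u)=-C_x(\mathsf u,\alpha)$, this agrees term by term with the local expression for $(L_{\mathfrak u}\sigma)(\mathsf v)(x)$ derived after \eqref{Lsigma}, evaluated with $\mathsf v(x)=\alpha$. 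Hence $\LXu(\Phi^\sigma)=\Phi^{L_\mathfrak u\sigma}$, and $T\pi\circ\LXu=\rho(\mathfrak u)\circ\pi$ is immediate.

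\textbf{Step 2: uniqueness and gluing.} Suppose $\mathcal X$ is any vector field with $T\pi\circ\mathcal X=\rho(\mathfrak u)\circ\pi$ and $\mathcal X(\Phi^\sigma)=\Phi^{L_\mathfrak u\sigma}$ for all $\sigma$. Its base component is then fixed. In a chart its fibre component $\Xi(x,\alpha)$ satisfies $\langle\sigma(x),\Xi(x,\alpha)\rangle=$ a prescribed value for every $\sigma$. Constant local dual sections $\sigma\equiv\lambda\in\mathbb A^*$ exist in a trivialization, and $\mathbb A^*$ separates points by Hahn--Banach, so $\Xi$ is determined.

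The local definitions on overlapping trivializations both satisfy the chart-independent condition \eqref{Xs}. By this uniqueness they coincide on overlaps, and therefore glue to a global field on $A_U$. This gluing step is the main point needing care. It relies only on the existence of enough local dual sections, which trivializations provide. It does not require global extensions.

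\textbf{Step 3: flows.} By \Cref{chalinX}(iii), the flow $\mathfrak F^t_\mathfrak u$ is a local bundle isomorphism covering $\varphi_t$. For \eqref{eq:linfield_loc}, fix $a\in A_x$ and consider
\[ t\mapsto\langle(\mathfrak F^t_{\mathfrak u,x})^T\sigma(\varphi_t x),a\rangle=\Phi^\sigma(\mathfrak F^t_\mathfrak u(a)). \]
Its derivative at $t=0$ is $\LXu(\Phi^\sigma)(a)=\langle L_\mathfrak u\sigma(x),a\rangle$. The family $t\mapsto(\mathfrak F^t_{\mathfrak u,x})^T\sigma(\varphi_t x)$ is differentiable in the norm topology: locally the flow is $(\Fl^{t}(y),P^{t}(y)a)$ with $P$ smooth in operator norm, as in the proof of \Cref{chalinX}. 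Therefore the weak identity upgrades to \eqref{eq:linfield_loc}.

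For \eqref{eq:bracket_deriv_lin_field}, pair with an arbitrary $\sigma(x)$. The transposition relation gives
\[ \langle\sigma(x),(\mathfrak F^t)^{-1}\mathfrak v(\varphi_tx)\rangle=\langle((\mathfrak F^t)^{-1})^T\sigma(x),\mathfrak v(\varphi_tx)\rangle. \]
Differentiate at $t=0$ by the product rule, using $\frac d{dt}|_0((\mathfrak F^t)^{-1})^T=-\frac d{dt}|_0(\mathfrak F^t)^T$ in the appropriate sense. This yields
\[ d(\sigma(\mathfrak v))(\rho(\mathfrak u))-\langle L_\mathfrak u\sigma,\mathfrak v\rangle=\sigma([\mathfrak u,\mathfrak v]_A), \]
by \eqref{Lsigma}. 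Since $\sigma(x)$ is arbitrary and dual elements separate points, \eqref{eq:bracket_deriv_lin_field} follows.

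Alternatively, this identity can be checked directly in coordinates. Differentiating $P^{t}(x)^{-1}\mathsf v(\Fl^t x)$ at $t=0$ gives $-B(x)\mathsf v(x)+d_x\mathsf v(r_x\mathsf u(x))$, and with $B\alpha=d_x\mathsf u(r_x\alpha)+C_x(\alpha,\mathsf u)$ this is exactly \eqref{loctrivrbracket}. I would present this coordinate version as the cleaner route.
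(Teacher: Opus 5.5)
Your proposal is correct and follows the paper's proof essentially step by step. You define the field locally via \eqref{eq:LXu_local} and verify \eqref{Xs} using skew-symmetry of $C_x$. You get uniqueness and gluing from constant local dual sections and Hahn--Banach separation, the bundle-isomorphism property of the flow from \Cref{chalinX}, and you upgrade the weak derivative in \eqref{eq:linfield_loc} by operator-norm differentiability of the fibre evolution. Your preferred route for \eqref{eq:bracket_deriv_lin_field}, the coordinate computation $-B_{\mathsf u}(x)\mathsf v(x)+d_x\mathsf v(r_x\mathsf u(x))$, is also the paper's. The only difference is minor: you take operator-norm smoothness of the evolution from the linear operator ODE, as asserted in \Cref{chalinX}, whereas the paper re-derives it through a uniform boundedness and integral-equation argument; both are valid.
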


\begin{proof}
In a bundle trivialisation define a local representative $\LXu^\alpha$ via formula \eqref{eq:LXu_local}. Note that $\LXu^\alpha$ is smooth in $(x,\alpha)$ and as the fourth component is linear in $\alpha$. As the local representative of $T\pi$ in a trivialisation is the projection $\text{pr}_{13}$ on the first and third component we obtain $\text{pr}_{13}\circ \LXu^\alpha (x,\alpha)=(x, r_x(\mathsf{u}(x)))$. This shows that $(\LXu^\alpha, r(\mathsf{u}))$ is a linear vector field and the second equality from \eqref{Xs}. 
Consider a local dual section $\sigma$. As $\Phi_\sigma (x,\alpha)= \langle \sigma (x),\alpha\rangle$ the computation after \eqref{Lsigma} yields 
\begin{align*} 
\LXu^\alpha (\Phi^\sigma)(x,\alpha) =&\langle d_x\sigma(r_x(\mathsf{u}(x))),\alpha\rangle + \langle\sigma(x), d_x\mathsf{u}(r_x(\alpha))+C_x(\alpha,\mathsf{u}(x))\rangle\\ =& \Phi^{L_u\sigma}(x,\alpha).
\end{align*} 
where the last equality follows by skew-symmetry of $C_x$. So for the local field $\LXu^\alpha$ both properties of \eqref{Xs} are satisfied. 

We now show that the two intrinsic properties from \eqref{Xs} uniquely identify the linear vector field. Let $X'$ be another vector field which satisfies the properties on the domain of the trivialisation. 
Fix $a\in A_x$ and since $\LXu^\alpha (a)$ and $X'(a)$ project to the same element in $T_xM$, their difference is vertical: $\LXu^\alpha(a)-X'(a) \in V_aA := \ker T_a\pi$. The canonical vertical-lift map $\mathrm{vl}_a\colon A_x\to V_aA$ is a linear isomorphism, \Cref{banachconnection}. 
Hence there is a unique $w_a\in A_x$ such that $\LXu^\alpha(a)-X'(a)=\mathrm{vl}_a(w_a)$. Let $\sigma$ be any local smooth section of $A^*$ defined in an $x$-neighborhood. 
Since $\Phi^\sigma(a+sw_a) = \langle\sigma(x),a+sw_a\rangle$, we have \[ d_a\Phi^\sigma(\mathrm{vl}_a(w_a)) = \left. \frac{d}{ds} \right|_{s=0} \Phi^\sigma(a+sw_a) = \langle\sigma(x),w_a\rangle. \] On the other hand, $\LXu^\alpha$ and $X'$ have the same action on $\Phi_\sigma$ and thus
\begin{align*} 0 = \LXu^\alpha (\Phi^\sigma)(a)-X'(\Phi^\sigma)(a) = d_a\Phi_\sigma\bigl(\LXu^\alpha (a)-X'(a)\bigr) = \langle\sigma(x),w_a\rangle. 
\end{align*} 
Every functional $\lambda\in A_x^*$ occurs as the value $\sigma(x)$ of some local smooth section of $A^*$ (in the local trivialisation around $x$ simply choose the constant map with value $\lambda$. It follows that $\lambda(w_a)=0$ for every $\lambda\in A_x^*$ and since the continuous dual of the Banach space $A_x$ separates points, we conclude that $w_a=0$ and the two sections coincide at $a$. We deduce that the two properties uniquely determines the vector field. 
As the properties from \eqref{Xs} are intrinsic, this establishes immediately, that the local vector fields $\LXu^\alpha$ for some trivialisations glue to a unique smooth vector field $\LXu$ on $A_U$. We obtain a unique linear vector field $(\LXu,\rho(\mathfrak{u}))$ which satisfies \eqref{Xs} thus establishing 1.
\smallskip

For both 2. and 3. we fix $x\in U$ and work locally in a bundle trivialisation around $x$. Let $\mathfrak{F}_\mathfrak{u}^t$ denote the local flow of $\LXu$, and $\varphi_t:=\Fl^t_{\rho{\mathfrak{u}}}$ denote the local flow of $\rho(\mathfrak{u})$. Restricting $t$ we may assume that the flow $\varphi_t(x)$ remains in the domain of the trivialisation for all suitable $t$. In this trivialisation we may thus define the operator $B_\mathsf{u}(y).\alpha :=d_y\mathsf{u}(r_y(\alpha))+C_y(\alpha,\mathsf{u}(y)) \in L(\mathbb{A},\mathbb{A})$ and note that the map $y \mapsto B_\mathsf{u}(y)$ is smooth in the operator-norm topology by construction.

2.  As $(\LXu,\rho(\mathfrak{u}))$ is a linear vector field, the assertion concerning the relation between the flows follows immediately from \Cref{chalinX}. Thus we obtain a continuous linear isomorphism $\mathfrak{F}_\mathfrak{u}^t \colon A_x \rightarrow A_{\varphi_t(x)}$. Write $Y_x(t)\in L(\mathbb{A}, \mathbb{A})$ for the the local representative of the family of linear isomorphisms. As $Y_x$ is the flow of the linear vector field $\LXu$, \eqref{eq:LXu_local} implies for every  $\alpha_0 \in \mathbb{A}$ that
\[  \frac{d}{dt} \bigl(Y_x(t).\alpha_0\bigr) = B_\mathsf{u}(\varphi_t(x)).(Y_x(t).\alpha_0) \] 
where the lower dot being application of a linear map. 
We deduce that 
\begin{align*}
Y_x(t).\alpha-Y_x(s).\alpha = \int_s^t B_\mathsf{u} (\varphi_r(x)).(Y_x(r).\alpha)dr.
\end{align*}
On every compact time interval and for every $\alpha_0$, the curve $r\mapsto Y_x(r).\alpha_0$ is continuous for each $\alpha\in\mathbb{A}$, and hence bounded. Applying the uniform boundedness principle \cite[23.14]{Sche97}. Let $J$ be a compact time interval on which the flow is defined, then continuity of $B_\mathsf{u}$ together with the integral equation yields 
\begin{align*}
\lVert Y_x(t)-Y_x(s)\rVert_\text{op} \leq |t-s|\sup_{r\in J}\lVert B_\mathsf{u}(\varphi_r(x))\rVert_{\text{op}}\sup_{r\in J}\lVert Y_x(r)\rVert_{\text{op}}
\end{align*}
Thus $Y_x$ is operator-norm continuous and satisfies the operator-valued integral equation $Y_x(t) = \operatorname{id}_{\mathbb{A}} + \int_0^t B_\mathsf{u}(\varphi_s(x)).Y_x(s)\,ds$ and the integrand is continuous with values in the Banach space $L(\mathbb{A},\mathbb{A})$. Consequently, by the fundamental theorem for Bochner integrals, $t\mapsto Y_x(t)$ is differentiable in operator norm and satisfies the operator-valued linear equation 
\begin{align}\label{eq:Yx_fibre}
\dot Y_x(t)=B_\mathsf{u}(\varphi_t(x))\circ Y_x(t), \ Y_x(0)=\operatorname{id}_{\mathbb{A}}.
\end{align}
For $\sigma \in \Gamma(A_U^\ast)$ define pointwise
$\sigma_t (x):=(\mathfrak{F}_{\mathfrak{u},x}^t)^T(\sigma(\varphi_t(x))) \in A_x^\ast, \quad x\in U$.  
As the transpose map sending $Y_x(t)$ to the transpose is continuous and linear, whence differentiable in operator norm, It follows that $\sigma_t$ is a differentiable $A^*_x$-curve. For $a\in A_x$ 
\begin{align*}
\langle \sigma_t (x),a\rangle = \langle (\sigma(\varphi_t(x))), \mathfrak{F}_\mathfrak{u}^t(a)\rangle =\Phi^\sigma (\mathfrak{F}_\mathfrak{u}^t(a))
\end{align*}
Using the definition of the flow and \eqref{Xs} we get
\begin{align*} \left. \frac{d}{dt} \right|_{t=0} \langle\sigma_t(x),a\rangle &= \left. \frac{d}{dt} \right|_{t=0} \Phi^\sigma(\mathfrak{F}^t_\mathfrak{u}(a))= \LXu (\Phi_\sigma)(a)= \Phi^{L_\mathfrak{u}\sigma}(a)= \langle(L_\mathfrak{u}\sigma)(x),a\rangle. \end{align*} 
Both sides are elements of $A_x^*$ which agree on every $a\in A_x$, and thus they are equal proving \eqref{eq:linfield_loc}. \smallskip

3. As in 2. we consider in the trivialisation $Y_x(t)\in L(\mathbb{A},\mathbb{A})$ the local representative of the fibre isomorphism $\mathfrak{F}^t_{\mathfrak{u},x}$. Then $(\mathfrak{F}^t_\mathfrak{u})^*\mathfrak{v}(x)$ is identified with $Y_x(t)^{-1}.\mathsf{v}(\varphi_t(x))$. We deduce from \eqref{eq:Yx_fibre} that  
$\left. \frac{d}{dt} \right|_{t=0} Y_x(t)^{-1} = -B_\mathsf{u}(x)$. Using these identities together with the product and chain rules  we obtain in the trivialisation for $\left. \frac{d}{dt} \right|_{t=0} (\mathfrak{F}^t_\mathfrak{u})^*\mathfrak{v}(x)$ the formula
\begin{align*}
 \left. \frac{d}{dt} \right|_{t=0} Y_x(t)^{-1}.\mathsf{v}(\varphi_t(x))&= -B_\mathfrak{u}(x)\mathsf{v}(x) + d_x\mathsf{v}(r_x(\mathsf{u}(x))) \\ &= d_x\mathsf{v}(r_x(\mathsf{u}(x))) - d_x\mathsf{u}(r_x(\mathsf{v}(x))) - C_x(\mathsf{v}(x),\mathsf{u}(x)). 
\end{align*}
Note that by skew-symmetry of $C_x$ the last identity shows that the derivative equals $\LB[\mathfrak{u},\mathfrak{v}]_A(x)$ by \eqref{loctrivrbracket}. Although the calculation is performed in a local trivialization, its final expression is $\LB[u,v]_A(x)$, and therefore independent of the chosen trivialization.
\end{proof}

\begin{defi}\label{completelift} 
Let $\mathfrak{u}\in \G(A_{ U})$ be a local section. The pair $(\LXu,\rho(\mathfrak{u}))$  from \Cref{thm:complete-lift} is called the \emph{complete lift} of $\mathfrak{u}$ on $A_{U}$. The flow $\mathfrak{F}_\mathfrak{u}^t$ of $\mathcal{X}_\mathfrak{u}$ is called the \emph{infinitesimal flow} of $\mathfrak{u}$.
\end{defi}

\begin{rem} \label{rem:admiss}
Let $\mathfrak{u}\in\Gamma(A_U)$, $\mathfrak{F}_\mathfrak{u}^t$ its infinitesimal flow, and $\varphi_t:=\Fl^t_{\rho(\mathfrak{u})}$. 
\begin{enumerate} 
\item For every Banach almost Lie algebroid, the projectability identity (property 2 from \eqref{Xs}) $T\pi\circ \LXu=\rho(\mathfrak{u})\circ\pi$ implies \[ \pi\circ \mathfrak{F}_\mathfrak{u}^t=\varphi_t\circ\pi \] wherever the flows are defined. 
\item If $A$ is a pre-Lie algebroid, then 
$\rho(\LB[\mathfrak{u},\mathfrak{v}]_A)=\LB[\rho(\mathfrak{u}),\rho(\mathfrak{v})]$ for all local sections $\mathfrak{v}$. Combining this identity with \eqref{eq:bracket_deriv_lin_field}, one obtains $\rho\bigl((\mathfrak{F}_\mathfrak{u}^t)^*\mathfrak{v}\bigr) = (\varphi_t)^*(\rho(\mathfrak{v}))$, since both sides satisfy the same evolution equation. Equivalently, the fibre maps satisfy $\rho\circ \mathfrak{F}_{\mathfrak{u},x}^t = T_x\varphi_t\circ\rho$ 
whenever defined. 
\item If $A$ is a Lie algebroid, then 
$\LB[\LXu,\mathcal{X}_\mathfrak{v}]=\mathcal{X}_{\LB[\mathfrak{u},\mathfrak{v}]_A}$. 
Indeed, both project to $\LB[\rho(\mathfrak{u}),\rho(\mathfrak{v})]=\rho(\LB[\mathfrak{u},\mathfrak{v}])$. Then on fibrewise linear functions the identity is equivalent to 
$\LB[L_\mathfrak{u},L_\mathfrak{v}]=L_{\LB[\mathfrak{u},\mathfrak{v}]_A}$, which follows from the Jacobi identity. Now \Cref{thm:complete-lift} 1. implies equality. Consequently, the infinitesimal flow acts by local Lie algebroid automorphisms. More precisely, \[ (\mathfrak{F}_\mathfrak{u}^t)^*\LB[\mathfrak{v},\mathfrak{w}]_A = \LB[(\mathfrak{F}_\mathfrak{u}^t)^*\mathfrak{v},(\mathfrak{F}_\mathfrak{u}^t)^*\mathfrak{w}]_A \] 
whenever all expressions are defined.
\end{enumerate} 
\end{rem}

\begin{setup}\label{setup:PF_PB}
Let $\mathfrak{u}\colon I\times U\to A$ be a smooth time-dependent local section, and let $\FlXu$ denote the evolution map of the time-dependent linear vector field $\mathcal{X}_{\mathfrak{u}_t}$. Let $\varphi_{t,s}:=\Fl^{t,s}_{\rho(\mathfrak{u})}$ 
be the evolution map of the time-dependent vector field $\rho(u_t)$. For a local section $\mathfrak{v} \in \Gamma(A_U)$, we define the \emph{pullback by the infinitesimal flow of $\mathfrak{u}$} 
\[ (\mathfrak{F}_\mathfrak{u}^{t,s})^*\mathfrak{v}(x) := (\mathfrak{F}_{\mathfrak{u},x}^{t,s})^{-1} \bigl(\mathfrak{v}(\varphi_{t,s}(x))\bigr) = \mathfrak{F}_{\mathfrak{u},\varphi_{t,s}(x)}^{s,t} \bigl(\mathfrak{v}(\varphi_{t,s}(x))\bigr), \]
where $\varphi_{t,s}$ is the flow of $\rho(\mathfrak{u})$. Define the \emph{push-forward by the infinitesimal flow of $\mathfrak{u}$} via
\[ (\mathfrak{F}_\mathfrak{u}^{t,s})_*\mathfrak{v}(y) := \mathfrak{F}_{\mathfrak{u},\varphi_{s,t}(y)}^{t,s}\bigl(\mathfrak{v}(\varphi_{s,t}(y))\bigr). \] 
These operators are inverse to one another on their common domains.
\end{setup}

\begin{lem} \label{timedebracketsection}
 For a local section $\mathfrak{v}$ of $A_U$ we use the notation from \Cref{setup:PF_PB}. 
Then, for every $x \in U$ for which the expressions are defined, 
\begin{align}\label{brackettimesection}
\left. \frac{\partial}{\partial t} \right|_{t=s} (\mathfrak{F}^{t,s}_\mathfrak{u})^*\mathfrak{v}(x) = \LB[\mathfrak{u}_s,\mathfrak{v}]_A(x). 
\end{align}
\end{lem}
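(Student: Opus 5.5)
We will reduce the statement to the computation already carried out in part 3 of \Cref{thm:complete-lift}, now for a time-dependent linear vector field. Fix $x\in U$ and $s$. We work in a bundle trivialisation $A_V\cong V\times\mathbb{A}$ around $x$ and shrink the time interval so that $\varphi_{t,s}(x)$ stays in $V$. As in the proof of \Cref{thm:complete-lift}, we set
\[
B_{\mathsf{u}_t}(y).\alpha := d_y\mathsf{u}_t(r_y(\alpha))+C_y(\alpha,\mathsf{u}_t(y)).
\]
By \eqref{eq:LXu_local} applied to each $\mathfrak{u}_t$, the time-dependent field $\mathcal{X}_{\mathfrak{u}_t}$ has local representative $(y,\alpha)\mapsto (y,\alpha,r_y(\mathsf{u}_t(y)),B_{\mathsf{u}_t}(y).\alpha)$. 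Since $\mathfrak{u}$ is smooth in $(t,y)$, the map $(t,y)\mapsto B_{\mathsf{u}_t}(y)\in L(\mathbb{A},\mathbb{A})$ is continuous, and in fact smooth, in operator norm.

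The first step is the operator-valued evolution equation. Let $Y_x(t,s)\in L(\mathbb{A},\mathbb{A})$ be the local representative of $\mathfrak{F}^{t,s}_{\mathfrak{u},x}$. Exactly as in part 2 of \Cref{thm:complete-lift} (pointwise integral equation, uniform boundedness, then Bochner integration), we will show that $t\mapsto Y_x(t,s)$ is operator-norm differentiable with
\[
\partial_t Y_x(t,s)=B_{\mathsf{u}_t}(\varphi_{t,s}(x))\circ Y_x(t,s),\qquad Y_x(s,s)=\operatorname{id}_{\mathbb{A}}.
\]
The only change from the autonomous case is the extra time dependence of $B$, and continuity of $B$ jointly in $(t,y)$ is all that argument needs. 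Inversion is smooth on the open set of invertible operators, so $\partial_t|_{t=s}Y_x(t,s)^{-1}=-B_{\mathsf{u}_s}(x)$.

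The second step is the derivative at $t=s$. By \Cref{setup:PF_PB}, the pullback $(\mathfrak{F}^{t,s}_\mathfrak{u})^*\mathfrak{v}(x)$ is represented by $Y_x(t,s)^{-1}.\mathsf{v}(\varphi_{t,s}(x))$. We differentiate at $t=s$ using the product rule and the chain rule, together with $\partial_t|_{t=s}\varphi_{t,s}(x)=r_x(\mathsf{u}_s(x))$. This gives
\[
d_x\mathsf{v}(r_x(\mathsf{u}_s(x)))-d_x\mathsf{u}_s(r_x(\mathsf{v}(x)))-C_x(\mathsf{v}(x),\mathsf{u}_s(x)).
\]
Skew-symmetry of $C_x$ and \eqref{loctrivrbracket} identify this expression with $\LB[\mathfrak{u}_s,\mathfrak{v}]_A(x)$. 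Because the final expression is intrinsic, the result does not depend on the trivialisation chosen.

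I expect the main obstacle to be justifying the operator-norm differentiability of the time-dependent evolution $Y_x(t,s)$ in $t$, including that it is defined on a common interval around $s$. A cruder alternative avoids part of this. Only the derivative at $t=s$ is needed, and the first-order Taylor expansions $Y_x(t,s)=\operatorname{id}+(t-s)B_{\mathsf{u}_s}(x)+o(t-s)$ and $\varphi_{t,s}(x)=x+(t-s)r_x(\mathsf{u}_s(x))+o(t-s)$ follow from the integral equations and continuity of the integrands. These two expansions already suffice for the computation in the second step.
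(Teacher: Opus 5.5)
Your proposal is correct and is essentially the paper's argument. The paper observes that at $t=s$ the evolution $\mathfrak{F}^{t,s}_{\mathfrak{u}}$ and the base flow $\varphi_{t,s}$ have first-order data $\mathcal{X}_{\mathfrak{u}_s}$ and $\rho(\mathfrak{u}_s)$, and then invokes \eqref{eq:bracket_deriv_lin_field}; you instead inline the coordinate computation from part 3 of \Cref{thm:complete-lift} with the time-dependent operator $B_{\mathsf{u}_t}$, which also makes explicit the operator-norm differentiability of $t\mapsto Y_x(t,s)^{-1}$ that the paper's ``depends only on $\mathcal{X}_{\mathfrak{u}_s}$'' step uses tacitly.
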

\begin{proof} 
Fix $s\in I$ and $x\in U$. By the definition $\mathfrak{F}_{\mathfrak{u}}^{s,s}=\mathrm{id}_A$. Moreover, the derivative with respect to the first time variable at $t=s$ is the time-$s$ vector field $\left. \frac{\partial}{\partial t} \right|_{t=s} \FlXu (a) = \mathcal{X}_{\mathfrak{u}_s}(a)$ for every $a$ in the relevant domain.
Similarly,  $\left. \frac{\partial}{\partial t} \right|_{t=s} \varphi_{t,s}(x) = \rho(\mathfrak{u}_s)(x)$. Consequently, the derivative at \(t=s\) of $(\mathfrak{F}^{t,s}_\mathfrak{u})^*\mathfrak{v}(x)$ depends only on the linear vector field $\mathcal{X}_{\mathfrak{u}_s}$. Applying \eqref{eq:bracket_deriv_lin_field} we obtain \eqref{brackettimesection}.
\end{proof}

\begin{rem}\label{cktime} 
Denote by $\mathfrak{u},\mathfrak{v}\colon I \times U \rightarrow A$ two time-dependent  sections of $A$. If $\mathfrak{u},\mathfrak{v}$ are of class $C^{k,\infty}$, then the bracket $(t,x)\mapsto \LB[\mathfrak{u}_t,\mathfrak{v}_t]_A(x)$ is a $C^{k,\infty}$-section.

For fixed $x\in U$, the fibre maps $\mathfrak{F}^{t,s}_{\mathfrak{u},x}\colon A_x\to A_{\varphi_{t,s}(x)}$ are $C^k$ in $(t,s)$ and smooth in the spatial variables. To see this, by the same integral and uniform boundedness argument as in the proof of \Cref{thm:complete-lift}, in a local trivialisation the fibre map $\mathfrak{F}^{t,s}_{\mathfrak{u},x}$ is represented by the evolution operator $Y_x(t,s)\in L(\mathbb{A},\mathbb{A})$ of a non-autonomous linear equation whose evolution satisfies an operator valued integral equation in $L(\mathbb{A},\mathbb{A})$. The same is true for its inverse and thus the induced pullback operators $(\mathfrak{F}^{t,s}_{\mathfrak{u},x})^*\colon A_{\varphi_{t,s}(x)}\to A_x$ also depend smoothly on $(t,s)$ with respect to the operator norm (i.e. in a trivialisation they yield smooth maps into $L(\mathbb{A},\mathbb{A})$).
\end{rem} 

\begin{lem}\label{lem:pullback-differentiation} For a Banach pre-Lie algebroid $(A,M,\rho,\LB_A)$ and any smooth time-dependent local section $\mathfrak{u}\colon I\times U \rightarrow A$ and $Z\colon I\times U \rightarrow A$ be a $C^{1,\infty}$-local section. Fix $s \in I$ and define pointwise $Q_t:=(\mathfrak{F}_\mathfrak{u}^{t,s})^*Z_t$. Then for all $x\in U$ for which the expressions are defined,
\begin{align} \label{wq:flow:differential}
\frac{\partial Q_t}{\partial t}(x) = (\mathfrak{F}_\mathfrak{u}^{t,s})^* \Big( \frac{\partial Z_t}{\partial t} - \LB[Z_t,\mathfrak{u}_t]_A \Big)(x). \end{align} \end{lem}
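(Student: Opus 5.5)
The plan is to reduce the claim to the cocycle property of the evolution maps together with \Cref{timedebracketsection} and a product rule. Fix $x\in U$ and $t_0\in I$ such that all expressions are defined near $t_0$. The two-parameter evolution satisfies $\mathfrak{F}^{t,s}_\mathfrak{u}=\mathfrak{F}^{t,t_0}_\mathfrak{u}\circ\mathfrak{F}^{t_0,s}_\mathfrak{u}$ and $\varphi_{t,s}=\varphi_{t,t_0}\circ\varphi_{t_0,s}$, as in \Cref{setup:flowsCrs}. Hence the pullbacks compose contravariantly:
\[ Q_t=(\mathfrak{F}^{t_0,s}_\mathfrak{u})^*\bigl((\mathfrak{F}^{t,t_0}_\mathfrak{u})^*Z_t\bigr). \]
Since $(\mathfrak{F}^{t_0,s}_\mathfrak{u})^*$ is evaluated pointwise at $x$, it is the fixed continuous linear map $(\mathfrak{F}^{t_0,s}_{\mathfrak{u},x})^{-1}\colon A_{\varphi_{t_0,s}(x)}\to A_x$ composed with evaluation at $y:=\varphi_{t_0,s}(x)$. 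It therefore commutes with $\partial_t$. This reduces the claim to the case $s=t_0$ at the point $y$, namely
\[ \left.\frac{\partial}{\partial t}\right|_{t=t_0}(\mathfrak{F}^{t,t_0}_\mathfrak{u})^*Z_t(y)=\partial_tZ_{t_0}(y)-\LB[Z_{t_0},\mathfrak{u}_{t_0}]_A(y). \]

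To prove this reduced identity, work in a local trivialisation around $y$. There $(\mathfrak{F}^{t,t_0}_\mathfrak{u})^*Z_t(y)=Y_y(t,t_0)^{-1}.\mathsf{Z}(t,\varphi_{t,t_0}(y))$. By \Cref{cktime}, $Y_y(t,t_0)^{-1}$ is differentiable in operator norm, and $(t,z)\mapsto \mathsf{Z}(t,z)$ is $C^1$ in $t$ and smooth in $z$. The map $(L,w)\mapsto L.w$ is continuous bilinear, so the product rule and the chain rule apply. They split the derivative at $t=t_0$ into two terms. The first term freezes the section at $Z_{t_0}$, and equals $\left.\partial_t\right|_{t=t_0}(\mathfrak{F}^{t,t_0}_\mathfrak{u})^*Z_{t_0}(y)$. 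The second term comes from $\partial_t\mathsf{Z}(t_0,y)$, since $Y_y(t_0,t_0)=\operatorname{id}$ and $\varphi_{t_0,t_0}(y)=y$.

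By \Cref{timedebracketsection}, the first term is $\LB[\mathfrak{u}_{t_0},Z_{t_0}]_A(y)=-\LB[Z_{t_0},\mathfrak{u}_{t_0}]_A(y)$ by antisymmetry. Applying $(\mathfrak{F}^{t_0,s}_\mathfrak{u})^*$ then gives \eqref{wq:flow:differential} at $t=t_0$. Since $t_0$ was arbitrary, this proves the lemma.

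The pre-Lie hypothesis is not really used beyond guaranteeing that the setting of \Cref{setup:PF_PB} makes sense. The main obstacle is justifying the product rule with only $C^{1,\infty}$ regularity of $Z$. We must check that $t\mapsto \mathsf{Z}(t,\varphi_{t,t_0}(y))$ is $C^1$. This follows from the chain rule \Cref{Crs_chainrules} applied with the $C^1$ curve $t\mapsto(t,\varphi_{t,t_0}(y))$. For the operator factor, we need operator-norm differentiability rather than only strong differentiability, and this is what \Cref{cktime} provides.
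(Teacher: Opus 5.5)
Your proposal is correct and uses the same ingredients as the paper's proof: the cocycle property of the evolution, a product and chain rule in a local trivialisation (with the operator-norm differentiability from \Cref{cktime}), \Cref{timedebracketsection}, and antisymmetry of the bracket. The only difference is the order of steps. You apply the cocycle first to reduce to differentiating at $t=s$, whereas the paper applies the product rule at general $t$ and then uses the cocycle to identify $\frac{d}{dt}(\mathfrak{F}^{t,s}_\mathfrak{u})^*$. Your splitting freezes only the time argument of $Z$ and keeps the moving base point, which makes the term handled by \Cref{timedebracketsection} more explicit than in the paper's somewhat looser notation.
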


\begin{proof}
 Fix $s\in I$ and pick $x\in U$. The curves $t\mapsto Q_t(x)$,  $t\mapsto Z_t(x)$ take values in the Banach space $A_x$. By \Cref{cktime}, $t\mapsto (\mathfrak{F}^{t,s}_{\mathfrak{u},x})^*$ is differentiable, whence
 \[ \frac{\partial Q_t}{\partial t}(x) = \left( \frac{d}{dt}(\mathfrak{F}^{t,s}_{\mathfrak{u},x})^*\right)(Z_t)(x) + (\mathfrak{F}^{t,s}_{\mathfrak{u},x})^*\left(\frac{\partial Z_t}{\partial t}(x)\right).\]
The cocycle property $(\mathfrak{F}_\mathfrak{u}^{t+h,s})^\ast=(\mathfrak{F}_\mathfrak{u}^{t,s})^\ast\circ (\mathfrak{F}_\mathfrak{u}^{t+h,t})^\ast$ together with  \Cref{timedebracketsection} yields for the derivative: $(\mathfrak{F}_\mathfrak{u}^{t+h,s})^*\mathfrak{v} = (\mathfrak{F}_\mathfrak{u}^{t,s})^\ast\mathfrak{v} +h(\mathfrak{F}_\mathfrak{u}^{t,s})^\ast\LB[\mathfrak{u}_t,\mathfrak{v}]_A + o(h)$ (we suppress the trivialisation needed to make sense of the operator valued identity). Therefore $\frac{d}{dt}(\mathfrak{F}_\mathfrak{u}^{t,s})^*(\mathfrak{v}) = (\mathfrak{F}_\mathfrak{u}^{t,s})^*\LB[\mathfrak{u}_t,\mathfrak{v}]_A$ and inserting this into the product rule, skew symmetry of the Lie bracket yields \eqref{wq:flow:differential}.
\end{proof}

There is a dual theory of (local) linear vector fields on the dual bundle $A^\ast$, cf. \cite{Mac05}. As this needs the notion of a partial almost Poisson structure we record it in \Cref{app:dualfield}.

\subsection{Homotopy of admissible curves}\label{Homotopy}
Throughout this section, we work with a Banach Lie algebroid $(A,\pi,M,\rho,\LB_A)$ which satisfies the assumption {\bf (H)} (cf. Assumption \ref{H})). We will repeatedly work in local coordinates using the notation from \Cref{loctriv}. The notion of vertical bundles and vertical lifts can be found in \Cref{banachconnection}. 
The following auxiliary construction and following Lemma will be used in the proof of \Cref{P_existH}.

\begin{defi}\label{def:homotopy}
Let $c\colon [t_0,t_1]\to A$ be a smooth admissible curve and set $\gamma:=\pi\circ c$. Let $b\in \Gamma(\gamma^*A)$ be a smooth section along $\gamma$. Then there is a well-defined section $\mathcal{T}_c(b)\in \Gamma(c^*TA)$ defined as follows: Let $A_U\cong U\times\mathbb{A}$ be a vector-bundle trivialization such that $\gamma(J)\subseteq U$ for a subinterval of $[t_0,t_1]$. Write $r_x\colon \mathbb{A}\to T_xM$ for the local representative of the anchor, \Cref{loctho}. In the trivialisation we identify
\begin{align} c(t)=(\gamma(t),a(t)),\quad b(t)=(\gamma(t),\beta(t)),  \text{ then define }  \notag\\ 
\mathcal{T}_c(b)(t) = \left( \gamma(t), a(t), r_{\gamma(t)}(\beta(t)), \frac{d\beta}{dt}(t)+C_{\gamma(t)}(a(t),\beta(t)) \right), \label{eq:transport_operator}
\end{align} where $C$ is the local structure map from \Cref{locbrac} \eqref{loctrivrbracket}. 
\end{defi}
\begin{lem} \label{lem:Tc-local-formula} 
 Let $c\colon [t_0,t_1]\to A$ be a smooth admissible curve and set $\gamma:=\pi\circ c$. The expression \eqref{eq:transport_operator} is independent of the local trivialisation. For fixed $c$ we obtain an $\R$-linear map 
 \begin{align}
 \mathcal{T}_c \colon \Gamma(\gamma^\ast A) \rightarrow \Gamma(c^\ast(TA)), b \mapsto \mathcal{T}_c(b).\end{align}
 Finally, if $f\colon [t_0,t_1]\to\R$ is smooth, then 
 \begin{align}\label{eq:transport_Leibniz} \mathcal{T}_c(fb) = f\mathcal{T}_c(b)+\frac{df}{dt} b^V, 
 \end{align} 
 where $b^V$ denotes the vertical lift of $b$. 
\end{lem}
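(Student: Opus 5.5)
The plan is to verify the three claims of \Cref{lem:Tc-local-formula} directly from the local formula \eqref{eq:transport_operator}. Linearity and the Leibniz rule are immediate in a single chart. The real content is independence of the trivialisation.

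\emph{Linearity and Leibniz rule.} In a fixed trivialisation, $b\mapsto \mathcal{T}_c(b)$ is $\R$-linear in $\beta$. Indeed, the third component $r_{\gamma(t)}(\beta(t))$ is linear in $\beta$, and the fourth component $\dot\beta+C_{\gamma}(a,\beta)$ is linear as well. The first two components $(\gamma,a)$ do not depend on $b$. The addition used here is the one in the $T\pi$-fibre over $c(t)$, i.e. the vector space structure of $T_{c(t)}A$. Replacing $\beta$ by $f\beta$ gives third component $f\,r_\gamma(\beta)$ and fourth component $f(\dot\beta+C_\gamma(a,\beta))+\dot f\beta$. The extra term $(\gamma,a,0,\dot f\beta)$ is exactly $\dot f\cdot \mathrm{vl}_{c(t)}(b(t))=\dot f\, b^V$. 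This proves \eqref{eq:transport_Leibniz} once the formula is known to be well defined.

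\emph{Independence of the trivialisation.} I would avoid tracking the transformation rules of $r$ and $C$ under a change of trivialisation, which would require the analogue of \Cref{nablaH} 3 for the bracket structure map. Instead I would find an intrinsic description. Choose, locally in $t$, a time-dependent section $\mathfrak{b}_t$ with $\mathfrak{b}_t(\gamma(t))=b(t)$; in the chart, $\mathsf b_t(x)=\beta(t)$ constant in $x$ suffices. Likewise choose a time-dependent section $\mathfrak{a}_t$ extending $c$, for instance $\mathsf a_t\equiv a(t)$. Consider the complete lift $\mathcal{X}_{\mathfrak{b}_t}$ from \Cref{thm:complete-lift} together with the vertical lift of $\partial_t\mathfrak{b}_t - \LB[\mathfrak b_t,\mathfrak a_t]_A$. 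The candidate intrinsic expression is
\[ \mathcal{T}_c(b)(t)=\mathcal{X}_{\mathfrak b_t}(c(t)) + \Bigl(\tfrac{d}{dt}\bigl(b(t)\bigr)-\text{correction}\Bigr)^V. \]
An alternative candidate is the derivative at $\varepsilon=0$ of $\mathfrak F^{\varepsilon}_{\mathfrak b_t}(c(t))$, corrected by a vertical term. I would test the simplest version first. Using \eqref{eq:LXu_local}, $\mathcal{X}_{\mathfrak b_t}(c(t))=(\gamma,a,r_\gamma(\beta),d\mathsf b_t(r_\gamma(a))+C_\gamma(a,\beta))$. Admissibility gives $\dot\gamma=r_\gamma(a)$. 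With $\mathsf b_t(x)$ chosen so that $\frac{d}{dt}\mathsf b_t(\gamma(t))=\dot\beta$, we get
\[ \dot\beta = \partial_t\mathsf b_t(\gamma)+d\mathsf b_t(\dot\gamma). \]
Hence
\[ \mathcal{T}_c(b)(t)=\mathcal{X}_{\mathfrak b_t}(c(t))+\bigl(\partial_t\mathfrak b_t(\gamma(t))\bigr)^V. \]
Both summands are intrinsic. The complete lift is unique by \Cref{thm:complete-lift}, and the vertical lift is canonical. It then remains to check that the right-hand side does not depend on the extension $\mathfrak b$. Two extensions differ by a time-dependent section $\mathfrak d_t$ vanishing along $\gamma$, so the difference of the right-hand sides is
\[ \mathcal X_{\mathfrak d_t}(c(t))+\bigl(\partial_t\mathfrak d_t(\gamma)\bigr)^V . \]
In coordinates this equals $(\gamma,a,0,d\mathsf d_t(\dot\gamma)+\partial_t\mathsf d_t(\gamma))=(\gamma,a,0,\tfrac{d}{dt}\mathsf d_t(\gamma(t)))=0$, since $r_\gamma(\mathsf d_t(\gamma))=0$ and $C_\gamma(a,0)=0$. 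This step requires admissibility $\dot\gamma=\rho(c)$.

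\emph{Main obstacle.} The delicate step is producing the extension $\mathfrak b_t$ with enough regularity so that $\mathcal X_{\mathfrak b_t}$ and $\partial_t\mathfrak b_t$ make sense as $C^{1,\infty}$ objects. The chart-constant extension works locally, and gluing across charts is unnecessary because the intrinsic expression is pointwise in $t$. Alternatively, \Cref{timedependent2} would give an extension if a global one is desired. Smoothness of $\mathcal T_c(b)$ as a section of $c^*TA$ then follows from the local formula.
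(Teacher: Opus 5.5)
Your proof is correct, but it takes a different route from the paper's.

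\textbf{The paper's route.} The paper proves chart-independence by direct computation. It writes the transition map $h\colon U\cap U'\to \mathrm{GL}(\mathbb{A})$ and records how $a$, $\beta$, the coordinates on $TA$ and the anchor transform. From the intrinsic bracket it extracts the transformation law
\[ C'(h.u,h.v)=h.C(u,v)-dh(r(u)).v+dh(r(v)).u . \]
Using $\dot\gamma=r_\gamma(a)$, it then checks that the fourth component of \eqref{eq:transport_operator} transforms by the tangent map of the chart change.

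\textbf{Your route.} You prove, in an arbitrary chart and for an arbitrary extension $\mathfrak b_t$ of $b$, the identity
\[ \mathcal{T}_c(b)(t)=\mathcal{X}_{\mathfrak b_t}(c(t))+\bigl(\partial_t\mathfrak b_t(\gamma(t))\bigr)^V . \]
This is exactly the second half of \Cref{vertX}, which the paper proves only after the lemma. You then read off chart-independence from the intrinsic nature of the right-hand side. In effect, you outsource the transformation law of $C$ to the gluing and uniqueness part of \Cref{thm:complete-lift}. That is where chart-independence of \eqref{eq:LXu_local} is established, via $\Phi^\sigma$, $L_{\mathfrak u}\sigma$ and separation of points by $A_x^*$. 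That theorem precedes the lemma and does not use it, so there is no circularity.

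\textbf{Comparison.} Your argument is shorter and more conceptual, and it gives \Cref{vertX} for free. The paper's argument is self-contained and yields the explicit cocycle law for the structure maps $C$, which is useful in its own right.

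\textbf{Minor remarks.}
\begin{enumerate}
\item The extension-independence check is correct but unnecessary. If you use one fixed extension in both charts (say chart-constant for the first trivialisation, restricted to the overlap), your identity already holds in each chart.
\item For the same reason, the ``main obstacle'' is not an obstacle: the chart-constant extension is smooth and suffices.
\item The exploratory ``candidates'' involving $\mathfrak a_t$, a bracket correction, or flow derivatives should be removed from the write-up.
\item For linearity, the relevant vector-space structure is that of $T_{c(t)}A$, the fibre of $p_A\colon TA\to A$, not a $T\pi$-fibre. The $T\pi$-fibres lie over points of $TM$, and $\mathcal{T}_c(b_1)$, $\mathcal{T}_c(b_2)$ generally lie in different ones.
\end{enumerate}
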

\begin{proof}
Unravelling the local conventions, \eqref{eq:transport_operator} clearly defines an element in $T_{c(t)}A$ depending smoothly on $t$. We have to prove that this is independent of the chosen trivialization, let $ \Psi\colon A_U\cong U\times\mathbb{A},\ \Psi'\colon A|_{U'}\cong U'\times\mathbb{A}$ be two local trivializations with non-empty overlap containing $\gamma(t)$ and write $h\colon U\cap U'\to \mathrm{GL}(\mathbb{A})$ for the change of trivialisation such that $\Psi'\circ \Psi^{-1}(x,\alpha)=(x,h(x).\alpha)$ (lower dot denoting application of a linear map). Along $\gamma$ we have on $A$ and $TA$  
\begin{align}\label{coord_trafo1}
a'=h.a,\quad  \beta'=h.\beta, \quad (x,a,y,\eta) \mapsto (x,h.a,y,h.\eta+dh(x;y).a).
\end{align} 
Finally, we let $r_x$ and $r_x'$ be the local representatives of the anchor with respect to the two trivialisations. As $\rho$ is a vector bundle map, $r_x' (h(x).\alpha)=r_x(\alpha)$. We deduce that the first three components of \eqref{eq:transport_operator} are independent of trivialisation and it suffices to prove the following identity 
\begin{align}\label{target_identity_independence}
\frac{d\beta'}{dt} + C'_{\gamma}(a',\beta') = h(\gamma).\left(\frac{d\beta}{dt}+C_{\gamma}(a,\beta)\right) + dh(\gamma;r(\beta)).a.
\end{align} 
where again $C,C'$ are the local structure maps with respect to the trivialisations. Since $c$ is an admissible curve, the chain rule 
\begin{align*}
\frac{d\beta'}{dt}(t)= dh(\gamma(t);\frac{d\gamma}{dt}(t)).\beta+ h(\gamma(t)).\frac{d\beta}{dt} = dh(\gamma(t);r_{\gamma(t)}(a(t))).\beta+ h(\gamma(t)).\frac{d\beta}{dt}.
\end{align*}
Next compute the transformation rule for $C$, let $u,v$ local sections of $A$. The bracket is intrinsic, so we get for the local representatives from \Cref{locbrac}
\begin{align*} 
&h.(dv(r(u))-du(r(v))+C(u,v))=h.\LB[u,v]_A \\ 
=&\LB[u,v]_A'=(dv'(r'(u'))-du'(r'(v'))+C(u',v'))\\ =&d(h.v)(r'(h.u))-d(h.u)(r'(h.v))+C(h.u,h.v).
\end{align*} 
Now we derivate the evaluation of the linear map $h$ to obtain the product formula $d(h.u)(r(v))=dh(r(v)).u + h.du(r(v))$ and similarly for the r\^{o}les of $u,v$ interchanged. Plug this into the last identity and cancel common terms: 
\begin{align*}
C'(h.u,h.v) =h.C(u,v)-dh(r(u)).v +dh(r(v)).u
\end{align*}
Apply the formulae for $C'$ at $u=a$ and $v=\beta$ and the derivative of $\beta$ to the left hand of \eqref{target_identity_independence} (and suppress the evaluation at $\gamma(t)$) to obtain
\begin{align*} 
\frac{d\beta'}{dt} + C'(a',\beta') = dh(r(a)).\beta + h.\frac{d\beta}{dt} + h.C(a,\beta)- dh(r(a)).\beta + dh(r(\beta)).a. \end{align*}
which is the right hand side of \eqref{target_identity_independence} proving that the section is well-defined. 

Linearity of the map $\mathcal{T}_c$ follows directly from the local formula by linearity of the anchor and bilinearity of $C$. Finally, let $f\in C^\infty([t_0,t_1],\R)$, then $fb$ has local fibre coordinate $f\beta$. Hence 
\begin{align*}\mathcal{T}_c(fb) = \left(\gamma,a, f r(\beta), f\frac{d\beta}{dt}+\frac{df}{dt}\beta+fC_{\gamma}(a,\beta )\right) =  f+\mathcal{T}_c(b)+\frac{df}{dt} b^V_c,
\end{align*} where the last identity follows as the vertical lift of $b(t)\in A_{\gamma(t)}$ at $c(t)\in A_{\gamma(t)}$ is represented in coordinates as 
$b^V(t) = \left( \gamma(t), a(t), 0, \beta(t) \right)$.
\end{proof}

\begin{rem}
Note that the transport map $\mathcal{T}_c$ makes sense for any $C^k$-admissible curve $c$, $k\geq 1$. However, as it places a derivative on the section $\beta$, it is then a linear map from $C^k$-sections to $C^{k-1}$-sections. To avoid notational clutter, the above statement only deals with smooth sections and curves. We will however use the full statement for finitely often differentiable mappings. 
\end{rem}

\begin{lem}\label{vertX}
Let $\mathfrak{v}\in \Gamma(A_U)$ be a local section, and let $\LXu[v]$ be its complete lift. 
Then, if we denote by $\LB[\mathfrak{v},\mathfrak{u}]^V_A$ the vertical lift of  $\LB[\mathfrak{v},\mathfrak{u}]_A$, 
\begin{align} \label{rrelvertX}
\LXu[v](\mathfrak{u}(x)) = T_x\mathfrak{u}(\rho(\mathfrak{v}(x))) + (\LB[\mathfrak{u},\mathfrak{v}]_A(x))^V_{\mathfrak{u}(x)}. 
\end{align}
Let $c$ be admissible, $\gamma=\pi\circ c$, and let $b$ be a section along $\gamma$. If $\mathfrak{v}_t$ satisfies $\mathfrak{v}_t(\gamma(t))=b(t)$, then 
\begin{align}\label{eq:TCB_ext}
\mathcal{T}_c(b)(t) = \mathcal{X}_{\mathfrak{v}_t}(c(t)) + \left(\frac{\partial \mathfrak{v}_t}{\partial t}(\gamma(t))\right)^V_{c(t)}.
\end{align}
\end{lem}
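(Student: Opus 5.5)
Both identities are local in nature and both sides are intrinsically defined: the complete lift by \Cref{thm:complete-lift}, the vertical lift canonically, and $\mathcal{T}_c$ by \Cref{lem:Tc-local-formula}. So I would verify them in a single vector-bundle trivialisation $A_V\cong V\times\mathbb{A}$ around the relevant point, using the conventions of \Cref{loctriv} and the local formulas \eqref{eq:LXu_local}, \eqref{loctrivrbracket} and \eqref{eq:transport_operator}. Write $\mathsf{u},\mathsf{v}\colon V\to\mathbb{A}$ for the local representatives. In these coordinates a vertical lift of $w\in A_x$ at $a\in A_x$ is $(x,a,0,w)$, and $T_x\mathfrak{u}(y)=(x,\mathsf{u}(x),y,d_x\mathsf{u}(y))$.

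\textbf{Step 1 (the pointwise identity \eqref{rrelvertX}).} By \eqref{eq:LXu_local} applied to $\mathfrak{v}$ at $\alpha=\mathsf{u}(x)$,
\[ \mathcal{X}_{\mathfrak{v}}(x,\mathsf{u}(x)) = \bigl(x,\mathsf{u}(x),r_x(\mathsf{v}(x)),\, d_x\mathsf{v}(r_x(\mathsf{u}(x)))+C_x(\mathsf{u}(x),\mathsf{v}(x))\bigr). \]
On the right-hand side of \eqref{rrelvertX}, $T_x\mathfrak{u}(\rho(\mathfrak{v}(x)))$ has the same base components and fourth component $d_x\mathsf{u}(r_x(\mathsf{v}(x)))$. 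Adding the vertical lift of $\LB[\mathfrak{u},\mathfrak{v}]_A(x)$, whose representative is given by \eqref{loctrivrbracket}, we see that the $d_x\mathsf{u}(r_x(\mathsf{v}))$ terms cancel. What remains is $d_x\mathsf{v}(r_x(\mathsf{u}))+C_x(\mathsf{u},\mathsf{v})$, which matches. The only care needed here is the sign and ordering convention in $C_x$, via skew-symmetry, exactly as in the proof of \Cref{thm:complete-lift}.

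\textbf{Step 2 (the transport formula \eqref{eq:TCB_ext}).} Fix $t$ and write $c(t)=(\gamma(t),a(t))$ and $\mathfrak{v}_t\equiv\mathsf{v}_t$, so that $\beta(t)=\mathsf{v}_t(\gamma(t))$. Differentiating this with the chain rule and using admissibility $\dot\gamma=r_\gamma(a)$ gives
\[ \frac{d\beta}{dt}=\partial_t\mathsf{v}_t(\gamma(t))+d_{\gamma(t)}\mathsf{v}_t\bigl(r_{\gamma(t)}(a(t))\bigr). \]
Now \eqref{eq:LXu_local} for $\mathfrak{v}_t$ at $\alpha=a(t)$ yields
\[ \bigl(\gamma,a,r_\gamma(\beta),\, d_\gamma\mathsf{v}_t(r_\gamma(a))+C_\gamma(a,\beta)\bigr). \]
Adding the vertical lift $(\gamma,a,0,\partial_t\mathsf{v}_t(\gamma))$ produces exactly the fourth component $\frac{d\beta}{dt}+C_\gamma(a,\beta)$ of \eqref{eq:transport_operator}. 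Intrinsicness of both sides then removes the dependence on the trivialisation.

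\textbf{Main obstacle.} The computations are routine; the real difficulty is regularity bookkeeping. When $\mathfrak{v}$ is only $C^{\ell,\infty}$ in $(t,x)$, the derivative $\partial_t\mathfrak{v}_t$ must be interpreted as a $C^{\ell-1,\infty}$-section and the chain rule justified. I would handle this with \Cref{Crs_chainrules} and the fact that $t$ ranges over an interval, so that Bastiani and Fréchet differentiability coincide (\Cref{rem:Bastiani_vs_Frechet}).
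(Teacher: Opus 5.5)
Your proposal is correct and follows the paper's proof essentially verbatim. Both identities are checked by comparing fourth components in a single trivialisation, using \eqref{eq:LXu_local}, \eqref{loctrivrbracket}, the chain rule for $\beta(t)=\mathsf{v}_t(\gamma(t))$ together with admissibility $\dot\gamma=r_\gamma(a)$, and \eqref{eq:transport_operator}. (In Step 1 you do not actually need skew-symmetry of $C_x$: evaluating \eqref{eq:LXu_local} at $\alpha=\mathsf{u}(x)$ already yields $C_x(\mathsf{u}(x),\mathsf{v}(x))$ in the order appearing in \eqref{loctrivrbracket}.)
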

\begin{proof}
Evaluating \eqref{eq:LXu_local} and $T_x\mathfrak{u}$ we get
\begin{align*}
\LXu[v](\mathsf{u}(x)) &= \Bigl( x,\mathsf{u}(x), r_x(\mathsf{v}(x)), d_x\mathsf{v}(r_x(\mathsf{u}(x)))+C_x(\mathsf{u}(x),\mathsf{v}(x)) \Bigr), \\ T_x\mathsf{u}(\rho(\mathsf{v}(x))) &= \Bigl( x,\mathsf{u}(x), r_x(\mathsf{v}(x)), d_x\mathsf{u}(r_x(\mathsf{v}(x))) \Bigr), 
\end{align*}
The difference of both terms is vertical and equals $(\LB[\mathsf{u},\mathsf{v}]_A)^V$ by \eqref{loctrivrbracket}.

Let $\mathfrak{v}_t$ be a time-dependent local extension of $b$, so that $\mathfrak{v}_t(\gamma(t))=b(t)$. In local coordinates, $\beta(t)=\mathsf{v}_t(\gamma(t))$ and therefore 
$\dot\beta(t) = \frac{\partial \mathsf{v}_t}{\partial t}(\gamma(t)) + d_{\gamma(t)}\mathsf{v}_t(r_{\gamma(t)}(a(t)))$. 
Using the local expression of the complete lift from \Cref{thm:complete-lift}, 
\[ \mathcal{X}_{\mathsf{v}_t}(c(t)) = \bigl( \gamma(t),a(t), r_{\gamma(t)}(\beta(t)), d_{\gamma(t)}\mathsf{v}_t(r_{\gamma(t)}(a(t))) + C_{\gamma(t)}(a(t),\beta(t)) \bigr). 
\] 
Comparing with \Cref{def:homotopy} yields \eqref{eq:TCB_ext}.
\end{proof}
According to the definition of homotopy for admissible curve in finite dimension (see \cite{CrFe11} and \cite{Mar02}), we introduce:

\begin{defi}\label{D_variationa}
A map $a \colon [0,1]\times[0,1]\to A$ such that $a_\varepsilon := a(\varepsilon,\cdot)$ is an admissible curve for every $\varepsilon \in [0,1]$ is called a \emph{variation of admissible curves} if $\g_\varepsilon:=\pi\circ a_\varepsilon \colon [0,1]\rightarrow M$ satisfies\begin{itemize}
\item $\g_\varepsilon(0)=x$ and $\g_\varepsilon(1)=y$, and both $x,y$ fixed for all $\varepsilon\in [0,1]$
\item
$\rho(a_\varepsilon)=\dis\frac{d\g_\varepsilon}{dt}$ for all $\varepsilon\in [0,1]$
\end{itemize}
\end{defi}

\begin{defi}\label{D_homotopy}
For $i=0,1$  let $c_i\colon [0,1] \to A$ be of class $C^k$, $k\geq 1$. We say that $c_0$ is \emph{homotopic} to $c_1$, if there exists $C^{k}$ maps $a,b\colon [0,1]\times[0,1]\rightarrow A$ with
 \begin{enumerate}
 \item  $a(0,\cdot )=c_0$ and ${a}(1,\cdot )=c_1$ and ${a}$ is a variation of admissible curves 
 \item $\gamma:=\pi\circ {a}=\pi\circ {b}$ and $\rho\circ {b}(\varepsilon,t)=\dis\frac{\partial \g}{\partial \varepsilon}(\varepsilon, t)$.
 \item $\dis\frac{\p {a}_\varepsilon}{\p \varepsilon}=\mathcal{T}_{a_\varepsilon }(b_\varepsilon)$
 \item For every $\varepsilon \in [0,1]$, $b_\varepsilon(0)=0=b_\varepsilon(1)$
 \end{enumerate}
 The pair $({a},{b})$ is called a ($C^k$-)\emph{homotopy} between $c_0$ and $c_1$.
 \end{defi}

\begin{rem}\label{variation} More generally one could have defined a variation of admissible curves $\g\colon I\times[0,1]\to A$ for some closed interval $I$. However, modulo a homothety on $\R$ we can always assume that $I=[0,1]$.
\end{rem}

Our aim is now to generalize results from \cite{CF03} on homotopies of admissible paths to the Banach case. For this, let us first outline how extension results from the previous sections can be applied to homotopies

\begin{setup}[Extending base paths and sections]
Consider a smooth map $a\colon [0,1] \to A$ and set $\pi\circ a=\g$. We write 
$a_\varepsilon(t)=a(\varepsilon,t)$ for a family of admissible curves with $a_0=a$ and $\gamma_\varepsilon :=\pi \circ a_\varepsilon$.
Consider an extension $\hat{\g}$ of $\g(\varepsilon,t):=\g_\varepsilon(t)$ to  $[0,1]\times]-\eta,1+\eta[$ for some $\eta>0$ given in 
Proposition \ref{timedepend}. If we assume that $M$ is smoothly paracompact \Cref{timedependent2} assertion (2), shows that for every $\varepsilon\in [0,1]$, there exists a  smooth    time-dependent   
section ${\mathfrak{u}}_\varepsilon(t,\cdot)$ 
defined on a neighbourhood $ \hat{V}$ of $\hat{\g}$,   whose projection on $\mathbb{R}$  contains an  interval $]-\eta,1+\eta[$, for some $\eta>0$, and such that $\mathfrak{u}_\varepsilon(t,{\g}_\varepsilon(t))=a_\varepsilon(t)$ for all $(t,\varepsilon) \in [0,1]\times ]-\eta,1+\eta[$.

Let $V$ be the projection on $M$ of $\hat{V}$. By compactness of the image of the curves, for the time-dependent vector fields  $\rho(\mathfrak{u}_\varepsilon)$  (resp. $\mathcal{X}_{\mathfrak{u}_\varepsilon}$),  
$t\in ]-\eta,1+\eta]$, for each $(t,x)\in \hat{V},\varepsilon\in [0,1]$, there exists $0<\eta'<\eta$ such that the flow  $\Fl_{\rho(\mathfrak{u}
_\varepsilon)}^{t',t}$  (resp. $\mathfrak{F}^{t',t}_{\mathfrak{u}_\varepsilon}$ ) is defined  on some  neigbourhood of $x$ (resp. $\mathfrak{u}_\varepsilon(t,x)$) such that $0\leq t\leq t'<1+\eta'$.
\end{setup}
Hence homotopies naturally fit into the machinery of extensions of sections. We continue with an auxiliary result concerning their evolution equations. The proof is postponed to \Cref{app:proof}.

\begin{lem} \label{lem:right-transport} 
Let $\mathfrak{u}\colon [0,1]\times U\to A$ be a $C^{k,\infty}$-time-dependent local section, $k\geq 1$, and assume that the evolution $\mathfrak{F}_\mathfrak{u}^{t,s}$ of the time-dependent complete lifts $\mathcal{X}_{\mathfrak{u}_t}$ is defined on the relevant domains for all $s,t\in[0,1]$.

Then the following assertions hold for the push-forward of sections. 
\begin{enumerate} 
\item For every local section $w$, the time-dependent section $Y_t:=(\mathfrak{F}_\mathfrak{u}^{t,s})_*w$ is for every $x$ for which the flows are defined the unique solution of 
\begin{align}\label{pushforward_transport}
\frac{\partial Y_t}{\partial t}(x) = \LB[Y_t,\mathfrak{u}_t]_A(x), \qquad Y_s(x)=w(x).
\end{align}
\item Let $f\colon [0,1]\times U\to A$ be a $C^{k,\infty}$-time-dependent local section. Define pointwise 
$v_t(y) := \int_0^t \mathfrak{F}^{t,r}_{\mathfrak{u},\varphi_{r,t}(y)} \left( f_r(\varphi_{r,t}(y)) \right)\,dr$ whenever all the terms are defined. For fixed $t,y$, the integral is the Bochner integral\footnote{cf. \cite[Sections 23-25]{Sche97} for the definition and basic properties used in the following.} in the Banach fibre $A_y$. Then $v$ is the unique $C^{k,\infty}$-solution of 
\begin{align}\label{eq:TD_pushforward}
\frac{\partial v_t}{\partial t} - f_t = \LB[v_t,\mathfrak{u}_t]_A, \qquad v_0=0.
\end{align}
\end{enumerate} \end{lem}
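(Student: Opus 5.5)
We reduce both assertions of \Cref{lem:right-transport} to statements about operator-valued evolutions in a single fibre. Throughout we work pointwise in $y$ and in a local trivialisation along the integral curve $r\mapsto\varphi_{r,t}(y)$, covering it by finitely many trivialisations if needed. In such a trivialisation, \Cref{cktime} represents $\mathfrak{F}^{t,s}_{\mathfrak{u},x}$ by an evolution operator $Y_x(t,s)\in L(\mathbb{A},\mathbb{A})$. This operator is $C^k$ in $(t,s)$ in operator norm and satisfies
\[ \partial_t Y_x(t,s)=B_{\mathsf{u}_t}(\varphi_{t,s}(x))\circ Y_x(t,s), \qquad \partial_s Y_x(t,s)=-Y_x(t,s)\circ B_{\mathsf{u}_s}(x). \]
The second identity follows from the first via the cocycle property.

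\textbf{Part 1.} Since push-forward and pullback are mutually inverse (\Cref{setup:PF_PB}), we have $(\mathfrak{F}^{t,s}_\mathfrak{u})^*Y_t=w$, which is constant in $t$. We apply \Cref{lem:pullback-differentiation} with $Z_t:=Y_t$. This requires $Y$ to be $C^{1,\infty}$, which follows from \Cref{setup:flowsCrs} and \Cref{cktime}. The lemma gives
\[ 0=(\mathfrak{F}^{t,s}_\mathfrak{u})^*\bigl(\partial_tY_t-\LB[Y_t,\mathfrak{u}_t]_A\bigr), \]
and invertibility of the fibre maps yields \eqref{pushforward_transport}.

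For uniqueness, let $\tilde Y$ be a second solution. The same lemma shows that $(\mathfrak{F}^{t,s}_\mathfrak{u})^*\tilde Y_t$ has zero $t$-derivative at every point. Hence it equals its value $w$ at $t=s$, so $\tilde Y_t=(\mathfrak{F}^{t,s}_\mathfrak{u})_*w=Y_t$.

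\textbf{Part 2.} Write the integrand as the push-forward $g(t,r,y):=\bigl((\mathfrak{F}^{t,r}_\mathfrak{u})_*f_r\bigr)(y)\in A_y$, so that $v_t(y)=\int_0^t g(t,r,y)\,dr$.

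Locally, $g$ is continuous into $\mathbb{A}$, so the Bochner integral exists. Its $C^{k,\infty}$ regularity comes from \Cref{Crs_chainrules}: the composition of the $C^{k,\infty}$ flows $\varphi_{r,t}$ and $Y$ with $f$ is again $C^{k,\infty}$, and differentiation under the Bochner integral is allowed because the derivatives are continuous on compacta. The same argument shows that for each fixed $r$, the map $t\mapsto g(t,r,\cdot)$ is $C^1$ as a section.

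We then differentiate using the Leibniz rule for parameter integrals:
\[ \partial_t v_t(y)=g(t,t,y)+\int_0^t\partial_t g(t,r,y)\,dr. \]
The boundary term is $g(t,t,y)=f_t(y)$. By Part 1 applied with $s=r$ and $w=f_r$, the integrand is $\partial_t g(t,r,\cdot)=\LB[g(t,r,\cdot),\mathfrak{u}_t]_A$.

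The main obstacle is to interchange the bracket with the integral, i.e.\ to show $\int_0^t\LB[g(t,r,\cdot),\mathfrak{u}_t]_A\,dr=\LB[v_t,\mathfrak{u}_t]_A$. We use the local formula \eqref{loctrivrbracket}. The bracket of $g(t,r,\cdot)$ with $\mathfrak{u}_t$ is a continuous linear expression in the $1$-jet of $g(t,r,\cdot)$ at $y$, with coefficients depending only on $y$ and $\mathfrak{u}_t$. Continuous linear maps commute with Bochner integrals. It therefore suffices to show that $d_y\int g\,dr=\int d_yg\,dr$, which holds because $d_yg$ is jointly continuous in $(r,y)$ by the $C^{k,\infty}$ property.

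Together with $v_0=0$, this proves \eqref{eq:TD_pushforward}. For uniqueness, the difference of two solutions solves the homogeneous equation \eqref{pushforward_transport} with initial value $0$, so it vanishes by the uniqueness in Part 1.
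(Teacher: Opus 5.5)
Your proposal is correct. Part 1 is essentially the paper's argument: pull back, apply \Cref{lem:pullback-differentiation}, invert the fibre maps, and get uniqueness from the constancy of the pullback.

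\textbf{Part 2: a different route.} The paper argues along integral curves. With $\gamma_x(t)=\varphi_{t,0}(x)$ it writes $V_x(t)=v_t(\gamma_x(t))=\int_0^tP_x(t,r)g_x(r)\,dr$ as a variation-of-constants formula in the single Banach space $\mathbb{A}$. Differentiating gives $\dot V_x=g_x+B_x(t)V_x$, where only the bounded operator $B_x(t)$ must pass through the Bochner integral. The paper then solves the chain-rule identity $\frac{d}{dt}v_t(\gamma_x(t))=(\partial_tv_t+dv_t(r(\mathsf{u}_t)))(\gamma_x(t))$ for $\partial_tv_t$ and recognises the bracket via \eqref{loctrivrbracket}. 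Uniqueness is linear ODE uniqueness along $\gamma_x$, and the general case is obtained by patching trivialisations.

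You instead work at a fixed point $y$. You recognise the integrand $g(t,r,\cdot)=(\mathfrak{F}^{t,r}_\mathfrak{u})_*f_r$ as the Part 1 solution with $s=r$, $w=f_r$, and then commute the first-order operator $[\cdot,\mathfrak{u}_t]_A$ with the integral. This is a Duhamel principle built directly on Part 1, and your uniqueness also reduces to Part 1.

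\textbf{What each route buys.} Yours is more structural. Since $g(t,r,y)\in A_y$ for all $r$, your key computation needs a trivialisation only around $y$. The paper's version keeps all differentiation along one curve in one Banach space. That is exactly the form used later in \Cref{P_existH} for $\widetilde b_\varepsilon(t)=v_\varepsilon(t,\gamma_\varepsilon(t))$. The extra cost of your route is the interchange $d_y\int_0^tg\,dr=\int_0^td_yg\,dr$. This is the same differentiation-under-the-integral fact the paper invokes to assert that $v$ is $C^{k,\infty}$, which its chain-rule step needs anyway. Moreover, only the directional derivative in the fixed direction $r_y(\mathsf{u}_t(y))$ enters. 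So continuity of $(r,y)\mapsto dg(t,r,y;h)$ suffices, and no operator-norm continuity is required.

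\textbf{A harmless slip.} Your preliminary identity $\partial_sY_x(t,s)=-Y_x(t,s)\circ B_{\mathsf{u}_s}(x)$ is false at a fixed source point $x$. Differentiating $Y_x(t,s)=Y_{\varphi_{s+h,s}(x)}(t,s+h)\,Y_x(s+h,s)$ at $h=0$ produces an additional term. It is minus the derivative of $x\mapsto Y_x(t,s)$ in the direction $r_x(\mathsf{u}_s(x))$, and it comes from the moving base point. The clean formula $\partial_rP_x(t,r)=-P_x(t,r)\circ B_x(r)$ holds only for the evolution operator along a fixed integral curve, as in \eqref{another:ODE}. You never use this identity, so your argument is unaffected.
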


The central result in this section is the next theorem which mirrors \cite[Proposition 1.3]{CF03}. However, the proof of loc.cit.\, uses tools, e.g.\, compactly supported vector fields which are not available on infinite-dimensional Banach manifolds. Thus these arguments do not admit a direct generalization to Banach manifolds.

\begin{theo}\label{P_existH}
Assume that $M$ is smoothly paracompact and $k\geq 2$. Let $a\colon [0,1]^2\rightarrow A$ be a $C^k$-variation of admissible curves and $b\colon [0,1]^2\to A$ be a $C^k$-map such that $\gamma= \pi\circ a=\pi\circ b$, $b_\varepsilon(0)=b_\varepsilon(1)=0$, for every $\varepsilon$ and $b^t:=b(\cdot,t)$ is an admissible curve for all $t \in [0,1]$. We have the following equivalences
\begin{enumerate}
\item[\rm (1)] $(a,b)$ is a homotopy from $a_0$ to $a_1$;
\item[\rm (2)] the pair gives rise to an algebroid morphism  
\[f_{a,b}\colon T([0,1]\times[0,1])\to A, \quad f_{a,b}(\lambda \partial_t+\mu\partial_\varepsilon)=\lambda a+\mu b\]
over $\gamma \colon [0,1]^2 \rightarrow M, \gamma :=\pi \circ a$.
\item [\rm (3)] 
For one (hence every) extension $\mathfrak{u}_\varepsilon(t,\gamma_\varepsilon(t))=a_\varepsilon(t)$ from \Cref{timedependent2}, the section $b$ is given by:
\begin{equation}\label{bFu}
\dis b_\varepsilon(t)= \int_0^t \mathfrak{F}_{\mathfrak{u}_\varepsilon, \gamma_\varepsilon (s)}^{t,s} \frac{\partial \mathfrak{u}_\varepsilon}{\partial \varepsilon}\left(s,\g_\varepsilon(s)\right)ds.
\end{equation}
where the integral is the Bochner integral in the Banach space $A_{\gamma_\varepsilon(t)}$.
\end{enumerate}
\end{theo}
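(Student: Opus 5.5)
The plan is to prove the chain (1)$\Leftrightarrow$(2) by a direct local computation, and (1)$\Leftrightarrow$(3) by reducing the homotopy condition to the transport equation of \Cref{lem:right-transport}.

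\textbf{Step 1: (1)$\Leftrightarrow$(2).} Condition \ref{LM1} for $f_{a,b}$ reads $\rho(a)=\partial_t\gamma$ and $\rho(b)=\partial_\varepsilon\gamma$. The first holds because $a$ is a variation of admissible curves. The second is condition 2 of \Cref{D_homotopy}, which is also contained in the assumption that each $b^t$ is admissible. For \ref{LM2}, the module $\Gamma(T[0,1]^2)$ is generated by $\partial_t,\partial_\varepsilon$, and their bracket vanishes. By \Cref{unicityofLieMoprhism}, together with the local formula from its proof, \ref{LM2} is therefore equivalent to the single identity
\[ \partial_t\beta-\partial_\varepsilon\alpha+\mathsf{C}_{\gamma}(\alpha,\beta)=0 \]
in a trivialisation, where $a=(\gamma,\alpha)$ and $b=(\gamma,\beta)$. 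The Leibniz terms in $\alpha_i,\beta_j$ are handled by the $\mathcal{F}$-linearity built into \eqref{eq_LieMoprhism}. On the other side, by \eqref{eq:transport_operator} the condition $\partial_\varepsilon a_\varepsilon=\mathcal{T}_{a_\varepsilon}(b_\varepsilon)$ means, in the same trivialisation,
\[ (\partial_\varepsilon\gamma,\partial_\varepsilon\alpha)=\bigl(r_\gamma(\beta),\,\partial_t\beta+C_\gamma(\alpha,\beta)\bigr). \]
Its base component is condition 2 of \Cref{D_homotopy}, and its fibre component is exactly the displayed identity. Condition 4 is a standing hypothesis. This gives (1)$\Leftrightarrow$(2); independence of the trivialisation is already guaranteed by \Cref{lem:Tc-local-formula}.

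\textbf{Step 2: reformulating (1) with an extension.} Fix an extension $\mathfrak{u}_\varepsilon$ from \Cref{timedependent2}(2), which is $C^{k,\infty}$ on $[0,1]\times J\times M$ and satisfies $\mathfrak{u}_\varepsilon(t,\gamma_\varepsilon(t))=a_\varepsilon(t)$. Using the same proposition applied to $b$, choose a $C^{k,\infty}$ extension $\mathfrak{v}_\varepsilon$ of $b_\varepsilon$. Differentiating $a_\varepsilon(t)=\mathfrak{u}_\varepsilon(t,\gamma_\varepsilon(t))$ in $\varepsilon$ gives
\[ \partial_\varepsilon a_\varepsilon=T\mathfrak{u}_\varepsilon(\partial_\varepsilon\gamma)+(\partial_\varepsilon\mathfrak{u}_\varepsilon)^V . \]
When condition 2 holds, $\partial_\varepsilon\gamma=\rho(b)$. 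Applying \Cref{vertX} \eqref{rrelvertX} with $\mathfrak{v}=\mathfrak{v}_{\varepsilon,t}$, and comparing with \eqref{eq:TCB_ext}, yields
\[ \partial_\varepsilon a_\varepsilon-\mathcal{T}_{a_\varepsilon}(b_\varepsilon)=\Bigl(\partial_\varepsilon\mathfrak{u}_\varepsilon-\partial_t\mathfrak{v}_\varepsilon+\LB[\mathfrak{v}_\varepsilon,\mathfrak{u}_\varepsilon]_A\Bigr)^V\Big|_{(t,\gamma_\varepsilon(t))}. \]
Hence condition 3 of \Cref{D_homotopy} is equivalent to the transport equation
\[ \partial_t\mathfrak{v}_\varepsilon-\partial_\varepsilon\mathfrak{u}_\varepsilon=\LB[\mathfrak{v}_\varepsilon,\mathfrak{u}_\varepsilon]_A \]
holding along the graph of $\gamma_\varepsilon$. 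This holds for any extension $\mathfrak{v}_\varepsilon$ of $b_\varepsilon$.

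\textbf{Step 3: (1)$\Leftrightarrow$(3) via \Cref{lem:right-transport}.} Apply part 2 of \Cref{lem:right-transport} with $f=\partial_\varepsilon\mathfrak{u}_\varepsilon$, which is $C^{k-1,\infty}$; this is where $k\geq 2$ is used. It produces the unique solution $v_t$ of \eqref{eq:TD_pushforward} with $v_0=0$. Evaluating the formula at $y=\gamma_\varepsilon(t)$, and using $\varphi_{s,t}(\gamma_\varepsilon(t))=\gamma_\varepsilon(s)$ (since $\gamma_\varepsilon$ is an integral curve of $\rho(\mathfrak{u}_\varepsilon)$ by admissibility), gives exactly the right-hand side of \eqref{bFu}.

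(3)$\Rightarrow$(1): take $\mathfrak{v}_\varepsilon:=v$. Then $v$ extends $b$, the equation of Step 2 holds everywhere, and the base identity $\rho(b)=\partial_\varepsilon\gamma$ is part of the hypothesis that $b^t$ is admissible.

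(1)$\Rightarrow$(3): the main obstacle is that Step 2 gives the transport equation only along the curve, not on an open set. It must therefore be turned into an ODE for the single fibre curve $b_\varepsilon(t)$, without using uniqueness of the PDE. I would first try pulling back by the fibre maps: set
\[ q(t):=(\mathfrak{F}^{t,0}_{\mathfrak{u}_\varepsilon,\gamma_\varepsilon(0)})^{-1}\,b_\varepsilon(t)\in A_{\gamma_\varepsilon(0)}. \]
Following the computation in \Cref{lem:pullback-differentiation}, applied pointwise along $\gamma_\varepsilon$ in a trivialisation, and using \eqref{eq:Yx_fibre}, one gets
\[ \dot q=(\mathfrak{F}^{t,0})^{-1}\bigl(\partial_t\mathfrak{v}_\varepsilon-\LB[\mathfrak{v}_\varepsilon,\mathfrak{u}_\varepsilon]_A\bigr)(t,\gamma_\varepsilon(t))=(\mathfrak{F}^{t,0})^{-1}\,\partial_\varepsilon\mathfrak{u}_\varepsilon(t,\gamma_\varepsilon(t)). \]
This only uses the values of the extension along the curve, because the $1$-jet dependence is absorbed into $\mathcal{X}_{\mathfrak{u}_t}$ through \Cref{vertX}. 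Since $q(0)=b_\varepsilon(0)=0$, integrating as a Bochner integral and applying $\mathfrak{F}^{t,0}$ together with the cocycle property recovers \eqref{bFu}.

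Finally, the phrase ``one hence every'' follows because (1) makes no reference to $\mathfrak{u}$.
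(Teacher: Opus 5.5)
Your proof is correct. For $(1)\Leftrightarrow(2)$ it is the paper's computation. For $(1)\Leftrightarrow(3)$ you take a genuinely different route.

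\textbf{The paper's route.} The paper sets $\widetilde b_\varepsilon(t):=v_\varepsilon(t,\gamma_\varepsilon(t))$, with $v$ from \Cref{lem:right-transport}. It then applies $\rho$ to the transport equation, using that $\rho$ intertwines the brackets. An ODE-uniqueness argument along $\gamma_\varepsilon$ gives $\rho(\widetilde b)=\partial_\varepsilon\gamma$. After that it checks $\partial_\varepsilon a=\mathcal{T}_{a}(\widetilde b)$ in coordinates. Finally, it gets $(1)\Rightarrow(3)$ from uniqueness for the homogeneous linear equation $\mathcal{T}_{a_\varepsilon}(b-\widetilde b)=0$ with zero initial value.

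\textbf{Your route.} You work with the given $b$ throughout.
\begin{enumerate}
\item \Cref{vertX} converts condition~3 into the transport equation along the graph of $\gamma_\varepsilon$.
\item Condition~2 is read off directly from the hypothesis that each $b^t$ is admissible.
\item $(1)\Rightarrow(3)$ becomes a variation-of-constants integration of $q(t)=(\mathfrak{F}^{t,0}_{\mathfrak{u}_\varepsilon,\gamma_\varepsilon(0)})^{-1}b_\varepsilon(t)$. In a trivialisation its derivative is $P^{-1}(\dot\beta-B\beta)$, and this does reduce to $P^{-1}\partial_\varepsilon\mathsf{u}_\varepsilon(t,\gamma_\varepsilon(t))$ once conditions~2 and~3 are inserted.
\end{enumerate}

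\textbf{Comparison.} Your argument is shorter and never uses the compatibility of $\rho$ with the brackets. Only the local bracket formula enters. In the direction $(1)\Rightarrow(3)$ the auxiliary extension $\mathfrak{v}_\varepsilon$ of $b$ is not even needed; the fibre computation along $\gamma_\varepsilon$ suffices.

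What the paper's route buys is information independent of $b$. For any variation $a$, the right-hand side of \eqref{bFu} automatically satisfies conditions~2 and~3 of \Cref{D_homotopy}. So whether $a$ can be completed to a homotopy is decided essentially by the vanishing of that integral at $t=1$. This is the existence-type statement relevant for the later integration program. Your argument does not give it, because it assumes $\rho(b)=\partial_\varepsilon\gamma$ rather than deriving it.
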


\begin{rem}\label{R_restriction01}
 For any closed interval $I=[t_0,t_1]$ in $\R$, the   \Cref{P_existH} is also true by replacing $[0,1]\times [0,1]$ by $[0,1]\times I$ after an  adequate adaptation of the notion of homotopy relative to the interval $I$.
 \end{rem}

\begin{rem}
 If $A$ is integrated by a Banach Lie groupoid, then the homotopy condition can also be characterized in terms of the Maurer--Cartan form of the integrating groupoid, cf. \cite{CF03}. This characterization carries over to the Banach setting and will be discussed in the sequel devoted to local integration.
\end{rem}
\begin{proof}[Proof of \Cref{P_existH}] $(1) \Leftrightarrow (2)$ We check \Cref{D_LieMorphism}. Now $a$ is a variation of admissible curves, \Cref{D_variationa}, and the two parameter family $\gamma = \pi \circ a = \pi \circ b$, property \ref{LM1} is satisfied as $a_\varepsilon$ and  
$b^t$ are admissible curves over $\g_\varepsilon$ and $\g^t$, respectively, for all $(\varepsilon,t)\in[0,1]^2$.

It suffices to check \ref{LM2} on the coordinate vector fields $\partial_t,\partial_\varepsilon$, since they generate $T([0,1]^2)$. Using formula \eqref{eq_LieMoprhism} and the local expression of the bracket from \Cref{locbrac}, we obtain from $\LB[\partial_t,\partial_\varepsilon]=0$ that
\begin{align}\label{eq:lie_hom_new}
0=f_{a,b} \Bigl( \LB[\partial_t,\partial_\varepsilon] \Bigr) = \frac{\partial a_\varepsilon}{\partial\varepsilon} -\frac{\partial b_\varepsilon}{\partial t}  + C_{\gamma(\varepsilon,t)} (b_\varepsilon,a_\varepsilon) = \frac{\partial a_\varepsilon}{\partial\varepsilon} - \mathcal{T}_{a_\varepsilon}(b_\varepsilon). \end{align}
where the second identity follows from \eqref{eq:transport_operator} as $C$ is skew-symmetric. 
Therefore condition \ref{LM2} is equivalent to the right hand side of \eqref{eq:lie_hom_new} vanishing which is precisely Condition~3. of \Cref{D_homotopy}. Further, we have assumed that $b_\varepsilon(0)=b_\varepsilon(1)=0$, i.e. Condition~(4) of \Cref{D_homotopy} holds. Hence $(1)\Leftrightarrow (2)$.
\smallskip

$(1) \Leftrightarrow (3):$ Let $\gamma(\varepsilon,t) = \pi(a(\varepsilon,t))$ and choose by \Cref{timedependent2} 2., a $C^{k,\infty}$-family of time-dependent local sections $ \mathfrak{u}$ defined on a neighbourhood of the image of $\gamma$ such that $\mathfrak{u}_\varepsilon(t,\gamma_\varepsilon(t)) = a_\varepsilon(t)$ for every $\varepsilon\in[0,1]$. For every fixed $\varepsilon$ we denote by $\mathfrak{F}^{t,s}_{\mathfrak{u}_\varepsilon}$ the evolution of $\mathfrak{u}_\varepsilon$ and by $\varphi^{t,s}_\varepsilon:=\Fl_{\rho(\mathfrak{u}_\varepsilon)}^{t,s}$ the evolution of $\rho(\mathfrak{u}_\varepsilon)$.
Set $f_\varepsilon = \frac{\partial \mathfrak{u}_\varepsilon} {\partial\varepsilon}$. 
By \Cref{lem:right-transport} 2., the section 
\[ v_\varepsilon(t,y) = \int_0^t \mathfrak{F}_{\mathfrak{u}_\varepsilon,\varphi_\varepsilon^{s,t}(y)}^{t, s} (f_\varepsilon(s))(\varphi_\varepsilon^{s,t}(y))\,ds,\qquad x\in U \] is the unique solution of 
\begin{align}\label{eq:vt_partial_diffeq}
\frac{\partial}{\partial t} v_\varepsilon (t)(x) - \frac{\partial}{\partial \varepsilon}  \mathfrak{u}_\varepsilon (t) (x)= \LB[v_\varepsilon(t), \mathfrak{u}_\varepsilon(t)]_A(x), \qquad v_\varepsilon(0,\cdot)=0, \quad x\in U. 
\end{align} 
Set $X_\varepsilon:=\rho(\mathfrak{u}_\varepsilon)$, $Y_\varepsilon:=\rho(v_\varepsilon)$. Applying the anchor $\rho$ to \eqref{eq:vt_partial_diffeq} and using that $\rho$ is a Lie algebra morphism (\Cref{C_rhoLieAlgebraMorphism}, as $A$ is a Lie algebroid) yields 
\begin{align}\label{anchored_PDE}
\frac{\partial Y_\varepsilon}{\partial t} (t,x) - \frac{\partial X_\varepsilon}{\partial \varepsilon}(t,x) = \LB[Y_\varepsilon(t),X_\varepsilon(t)](x), \quad x\in U.  \end{align}
 Now since $a_\varepsilon$ is an admissible curve extended via $\mathfrak{u}_\varepsilon$ we get 
\[ X_\varepsilon ( t,\gamma_\varepsilon(t) ) = \rho(\mathfrak{u}_\varepsilon(t,\gamma_\varepsilon(t))=\rho(a_\varepsilon(t))= \frac{\partial\gamma_\varepsilon} {\partial t}(t), \] 
We differentiate with respect to $\varepsilon$. The mixed partial derivatives commute due to the map $\gamma$ being $C^2$. Denoting by $d_2$ differentiation in the $M$-variable, the chain rule yields:
\begin{align}\label{eq:Xeps}
\frac{\partial}{\partial t} \frac{\partial}{\partial \varepsilon}\gamma_\varepsilon (t) =\frac{\partial}{\partial \varepsilon} X_\varepsilon (t,\gamma_\varepsilon(t))=  \frac{\partial X_\varepsilon}{\partial \varepsilon} (t,\gamma_\varepsilon(t))+d_2X_\varepsilon \left(t,\gamma_\varepsilon(t);\frac{\partial}{\partial \varepsilon } \gamma_\varepsilon(t)\right)
\end{align}
The chain rule and the derivative formula for $\gamma_\varepsilon$ yield 
\begin{align}
\frac{\partial}{\partial t} Y_\varepsilon (t,\gamma_\varepsilon (t)) \stackrel{\hphantom{\eqref{anchored_PDE}}}{=}& \frac{\partial Y_\varepsilon}{\partial t} (t,\gamma_\varepsilon (t)) + d_2 Y_\varepsilon\left(t,\gamma_\varepsilon (t);\frac{\partial}{\partial t} \gamma_\varepsilon(t)\right) \notag \\ \stackrel{\eqref{anchored_PDE}}{=}&\left(\frac{\partial X_\varepsilon}{\partial \varepsilon } + \LB[Y_\varepsilon,X_\varepsilon] + d_2 Y_\varepsilon \circ (\text{id}, X_\varepsilon)\right)(t,\gamma_\varepsilon(t)) \notag \\ \stackrel{\hphantom{\eqref{anchored_PDE}}}{=}&  \frac{\partial X_\varepsilon}{\partial \varepsilon} (t,\gamma_\varepsilon (t)) + d_2 X_\varepsilon (t,\gamma_\varepsilon(t); Y_\varepsilon (t,\gamma_\varepsilon(t))) \label{eq:Yeps}
\end{align}
Comparing \eqref{eq:Yeps} and \eqref{eq:Xeps}, $Y_\varepsilon(t,\gamma_\varepsilon (t))$ solves the same ODE as $\frac{\partial}{\partial \varepsilon} \gamma_\varepsilon(t)$.
Both solutions vanish at $t=0$ by \eqref{eq:vt_partial_diffeq} and as the endpoints are fixed by the homotopy and thus $\frac{\partial}{\partial \varepsilon }\gamma_\varepsilon (0)=0$. Hence uniqueness of linear ODEs in Banach spaces yields $Y_\varepsilon ( t,\gamma_\varepsilon(t) ) = \frac{\partial\gamma_\varepsilon} {\partial\varepsilon}(t)$. Therefore we can exploit now that $\gamma_\varepsilon$ is an integral curve of $\rho(\mathfrak{u}_\varepsilon)$ and by setting $ \widetilde b_\varepsilon(t) := v_\varepsilon(t,\gamma_\varepsilon (t))$ we obtain that 
\begin{align}\label{eq:btilde_def}
 \widetilde b_\varepsilon(t) = \int_0^t \mathfrak{F}_{\mathfrak{u}_\varepsilon,\gamma_\varepsilon(s)}^{t,s} \frac{\partial \mathfrak{u}_\varepsilon}{\partial \varepsilon}\left(s,\g_\varepsilon(s)\right)ds \quad \text{ and } \rho(\widetilde b_\varepsilon (t)) = \frac{\partial\gamma_\varepsilon} {\partial\varepsilon}(t). \\ \Rightarrow 
 \frac{\partial}{\partial t} \tilde{b}_\varepsilon (t)= \frac{\partial v_\varepsilon}{\partial t} (t,\gamma_\varepsilon(t))+d_2v_\varepsilon \left(t,\gamma_\varepsilon(t);\underbrace{\frac{\partial}{\partial t}\gamma_\varepsilon (t)}_{=\rho(\mathfrak{u}_\varepsilon (t))}\right)
 \label{deriv:btilde_chain}
\end{align}
Now differentiate $a_\varepsilon (t) =\mathfrak{u}_\varepsilon (t,\gamma_\varepsilon(t))$ in $\varepsilon$ and use the chain rule: 
\begin{align}\label{eq:2ndbtilde}
\frac{\partial a_\varepsilon} {\partial\varepsilon}(t) & \stackrel{\hphantom{+}\eqref{eq:btilde_def}\hphantom{\eqref{loctrivrbracket}}}{=} \left(\frac{\partial \mathfrak{u}_\varepsilon}{\partial \varepsilon}+ d_2 \mathfrak{u}_\varepsilon (\cdot; \rho(\tilde{b}_\varepsilon (t)))\right)  (t,\gamma_\varepsilon (t))\notag \\ &\stackrel{\hphantom{+}\eqref{eq:vt_partial_diffeq}\hphantom{\eqref{loctrivrbracket}}}{=} \left(\frac{\partial v_\varepsilon }{\partial t}- \LB[v_\varepsilon, \mathfrak{u}_\varepsilon]+d_2 \mathfrak{u}_\varepsilon (\cdot; \rho(\tilde{b}_\varepsilon (t)))\right) (t,\gamma_\varepsilon (t)) \notag\\ &\stackrel{\eqref{deriv:btilde_chain}+\eqref{loctrivrbracket}}{=} \frac{\partial\widetilde b_\varepsilon} {\partial t} - C_{\gamma_\varepsilon (t)} (\widetilde b_\varepsilon(t),a_\varepsilon(t)) \stackrel{\eqref{eq:transport_operator}}{=} \mathcal{T}_{a_\varepsilon(t)} (\widetilde b_\varepsilon (t)).
\end{align} 
Note that \eqref{eq:btilde_def} and \eqref{eq:2ndbtilde} are exactly properties 2. and 3. from \Cref{D_homotopy}. Thus if we assume now that $b=\tilde{b}$, i.e. 3. holds, we have $\tilde{b}_\varepsilon(0)=b_\varepsilon(0)=0, \tilde{b}_\varepsilon(1)=b_\varepsilon(1)=0$ as these hold for $b$. We conclude that $(a,\tilde{b})$ is a homotopy and $(3)\Rightarrow (1)$.

For the converse, assume that $(a,b)$ is a homotopy from $a_0$ to $a_1$. Thus \Cref{D_homotopy} 3. shows that $\mathcal{T}_{a_\varepsilon} (b)=\frac{\partial a_\varepsilon} {\partial\varepsilon}$. Now the curve $c_\varepsilon (t) := b_\varepsilon (t) - \tilde{b}_\varepsilon (t)$ covers $\gamma_\varepsilon$. Hence the local formula (cf. \Cref{locbrac}) yields \begin{align}\label{eq:ODE_lin}
\mathcal{T}_{a_\varepsilon}(c_\varepsilon (t))= \frac{\partial c_\varepsilon(t)}{\partial t} +  C_{\gamma_\varepsilon(t)}(a_\varepsilon(t), c_\varepsilon (t) ) = \mathcal{T}_{a_\varepsilon}(b_\varepsilon (t))-\mathcal{T}_{a_\varepsilon}(\tilde{b}_\varepsilon (t)) =0.
\end{align}
and $b_\varepsilon (0)=0$ as $(a,b)$ is a homotopy and $\tilde{b}_\varepsilon (0)=0$ by \eqref{eq:btilde_def}. Thus $c_\varepsilon(0)=b_\varepsilon(0)-\tilde{b}_{\varepsilon} (0)=0$. As \eqref{eq:ODE_lin} is a linear homogeneous ODE in the pullback bundle $\gamma_\varepsilon^\ast A$, we deduce from uniqueness of solutions to ODE on Banach manifolds, \cite{Lang01}, that $b=\tilde{b}$ and thus the formula \eqref{bFu} holds. Note that the assumption on $b$ enforces $\widetilde b_\varepsilon(1)=0$ as $b=\widetilde b_\varepsilon$ by uniqueness, thus finishing this part of the proof. 
 \end{proof} 

 \appendix
\section{Topological properties of infinite-dimensional manifolds}\label{app:smooth_para}

In this appendix we collect a few results on (smoothly) paracompact spaces needed in some of the proofs. In general we try to avoid the use of paracompactness and smooth bump functions since it is for many of our discussion an unnecessary restriction (in particular the requirement to be smoothly regular requires the existence of a suitably smooth norm on the Banach spaces). We need a standard topological result.

\begin{lem}\label{lem:paracpt-nbhd_cmpt.set}
Let $M$ be a Banach manifold which is regular as a topological space\footnote{For infinite-dimensional manifolds this is a relevant restriction, cf. the counterexample in \cite[27.6]{KrMi97} exhibiting a non-regular Hilbert manifold.} and $K \subseteq M$ be a compact set. Then there exists an open $K$-neighborhood in $M$ which is metrisable and thus paracompact. 
\end{lem}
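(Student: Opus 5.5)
The statement to prove is \Cref{lem:paracpt-nbhd_cmpt.set}: a compact subset $K$ of a regular Banach manifold has a metrisable, hence paracompact, open neighbourhood. I would proceed in three steps: cover $K$ by finitely many chart domains, show that a neighbourhood of $K$ inside their union is metrisable, and conclude paracompactness from Stone's theorem.

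\textbf{Finite chart cover.} For each $x\in K$ I choose a chart $(U_x,\kappa_x)$ with $\kappa_x(U_x)$ an open subset of the model Banach space. Each $U_x$ is homeomorphic to a subset of a Banach space, so it is metrisable. Since $M$ is regular, I can also choose an open $V_x\ni x$ with $\overline{V_x}\subseteq U_x$. Compactness of $K$ gives finitely many points $x_1,\dots,x_n$ with
\[
K\subseteq V:=V_{x_1}\cup\dots\cup V_{x_n},\qquad \overline{V_{x_i}}\subseteq U_{x_i}.
\]

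\textbf{Metrisability of a neighbourhood.} I plan to apply the Smirnov metrisation theorem (\cite[5.4.A]{Eng89}): a paracompact Hausdorff space that is locally metrisable is metrisable. Local metrisability of $V$ is immediate from the chart cover, and $V$ is Hausdorff and regular as a subspace of $M$. The difficulty is that Smirnov requires paracompactness, which is exactly what we want to produce, so the argument must avoid circularity.

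My first attempt is a direct route through the finite closed cover. Each $\overline{V_{x_i}}\cap\overline{V}$ is closed in $\overline{V}$ and contained in the metrisable set $U_{x_i}$, so it is metrisable. The relevant tool is a finite-union theorem: a Hausdorff space that is the union of finitely many closed metrisable subspaces is metrisable provided it is paracompact, or at least collectionwise normal and locally metrisable (\cite[5.5.7/5.4.H]{Eng89}). This again requires some normality, so it does not immediately close the gap.

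The second route is to replace $V$ by a Lindel\"of neighbourhood. A regular Lindel\"of space is paracompact, so Lindel\"ofness together with local metrisability would finish via Smirnov. However, neighbourhoods of $K$ need not be Lindel\"of, because Banach spaces are not separable in general.

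The route I expect to work is to shrink metric balls. Inside each chart, take a finite cover of $K$ by small metric balls $B_i$ whose doubles stay inside $U_{x_i}$. I would then try to show that $W=\bigcup_i B_i$ is metrisable by exhibiting a $\sigma$-locally finite base: the union of the countable-per-ball metric bases $\{B(y,1/m)\}$ within each $B_i$. This base is $\sigma$-locally finite because, for fixed $i$ and $m$, one takes a locally finite (Stone) refinement inside the metric space $B_i$. Since there are only finitely many $i$, the union of the $i$ pieces of each level is still locally finite in $W$. Here regularity of $M$ is used to keep the closures of the $B_i$ inside the charts, so that local finiteness within each chart transfers to $W$. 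Nagata--Smirnov (regular plus $\sigma$-locally finite base) then gives metrisability of $W$, and Stone's theorem gives paracompactness.

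\textbf{Main obstacle.} The hardest step is checking local finiteness at points of $W$ lying near the boundary of some $B_j$ while belonging to $B_i$. I would handle this by working with the closed sets $\overline{B_j}\subseteq U_{x_j}$, taking the refinements in the metric space $U_{x_j}$ rather than in $B_j$, and then restricting them to $W$.
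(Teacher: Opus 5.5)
Your final Nagata--Smirnov route works once the last step is stated precisely. It is not the paper's proof, however: the paper's proof is the first attempt you discarded. The paper takes charts with $\overline{N_{x_i}}\subseteq U_{x_i}$, using regularity. It notes that the finitely many sets $\kappa_{x_i}^{-1}(\overline{B_1(0)})\cap\overline{N_{x_i}}$ are closed in $M$ and metrisable. It then applies the sum theorem \cite[Theorem 4.4.19]{Eng89}, which says their union $T$ is metrisable, so the open set $O\subseteq T$ is metrisable too. You rejected this route because you thought the finite-union theorem needs paracompactness or collectionwise normality. It does not. A union of finitely many closed metrisable subspaces is automatically regular. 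By Michael's criterion (locally finite closed refinements suffice) it is also paracompact. So your set $\overline V=\bigcup_i\overline{V_{x_i}}$ already finishes the proof in two lines. The pieces $\overline{V_{x_i}}$ are closed in $M$ and lie in the metrisable $U_{x_i}$.

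Your third route reproves this special case of the sum theorem by hand. It is more elementary and self-contained but needs more bookkeeping, and it uses regularity in the same place, to get $\overline{B_j}^M\subseteq U_{x_j}$. For the main obstacle, define the level-$m$ family for index $j$ as $\{R\cap B_j : R\in\mathcal R_{j,m}\}$. Here $\mathcal R_{j,m}$ is a locally finite open refinement, in the metric space $U_{x_j}$, of the cover of $U_{x_j}$ by $1/m$-balls. Local finiteness at $z\in W\cap U_{x_j}$ holds because $U_{x_j}$ is open in $M$. At $z\notin U_{x_j}$, the neighbourhood $M\setminus\overline{B_j}$ misses the whole family. Intersecting the members of $\mathcal R_{j,m}$ only with $W$ is not enough, because they can accumulate at points of $W$ on the frontier of $U_{x_j}$. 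For the base property, take $z\in B_j$ and $m$ with $B_{d_j}(z,2/m)\subseteq G\cap U_{x_j}$. Any $R\in\mathcal R_{j,m}$ containing $z$ lies in $B_{d_j}(z,2/m)$. The metric balls and their doubles are unnecessary: any open $B_j$ with $\overline{B_j}^M\subseteq U_{x_j}$ works, for example $V_{x_j}$.
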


\begin{proof}
For every $x\in K$ we pick a chart $(U_x,\kappa_x)$ with $x \in U_x$ such that $\kappa_x (x)=0$ and $\kappa_x(U_x)$ contains the closed unit ball $\overline{B_1(0)}$ in the Banach space model space. As $M$ is regular as a topological space, there exists an open $x$-neighborhood $N_x$ such that $\overline{N}_x^M \subseteq U_x$, where the superscript denotes the closure in $M$.  By compactness, there are finitely many $x_i \in K$ such that $K \subseteq O:= \bigcup_i \kappa_{x_i}^{-1} (B_1(0))\cap N_x$. Set $T:= \bigcup_i \kappa_{x_i}^{-1}(\overline{B_1(0)})\cap \overline{N_x}$. Then the closed balls $\overline{B_1 (0)}$ are metrisable, whence the $\kappa_{x_i}^{-1}(\overline{B_1(0)})\cap\overline{N}_x$ form a cover of $T$  by closed metrisable subsets. We deduce that $T$ is metrisable by \cite[Theorem 4.4.19]{Eng89}. Hence the open neighborhood $O \subseteq T$ is metrisable. 
\end{proof}

Recall from \cite[Section 15]{KrMi97} the following definitions:

\begin{defi}
Consider a manifold $M$ and a subset $\mathcal{S} \subseteq C^0(M,\R)$ of continuous functions. We say that $M$ 
\begin{enumerate}
\item admits $\mathcal{S}$-bump functions or is ($\mathcal{S}$ -regular) if for every $x\in M$ and every open $x-$neighborhood $U \subseteq M$ there exists $\chi \in \mathcal{S}$ such that $\chi (M) \subseteq [0,1], U = \chi^{-1} (]0, 1])$ and $\chi (x) = 1$.
\item is $\mathcal{S}$-paracompact if every open cover $(U_i)_{i \in I}$ of $M$ admits a subordinate partition
of unity $(\chi_i)_{i\in I}$ of functions $\chi_i \in \mathcal{S}, i \in I$.
\end{enumerate}
If $\mathcal{S} = C^k(M, \R)$ for some $k\in \N_0 \cup \{\infty\}$, we shorten the notation and say that $M$ admits $C^k$-bump functions or is $C^k$
-paracompact. Note that $C^0$-paracompactness is equivalent to the usual topological notion of paracompactness.
\end{defi}
Several remarks are in order: 

\begin{rem}
\begin{enumerate}
\item $\mathcal{S}$-paracompactness implies $\mathcal{S}$-regularity. 
\item By \cite[Theorem 16.10]{KrMi97} on a manifold with the Lindel\"{o}f property, being $\mathcal{S}$-regular implies $\mathcal{S}$-paracompactness.
\item Assume that $\mathcal{S}$ is a property which is stable under diffeomorphisms (e.g. $\mathcal{S}=C^k, k \in \N_0 \cup \{\infty\}$. If the model space of the manifold is $\mathcal{S}$-regular, also $M$ is $\mathcal{S}$-regular, whence $\mathcal{S}$-paracompact if $M$ is Lindel{\"o}f by 2.  
\end{enumerate}\end{rem}

\begin{lem}\label{lem:reg_BB}
For a Banach bundle $\pi \colon E \rightarrow M$, $M$ is ($C^\infty$-)paracompact if and only if $E$ is ($C^\infty$)-paracompact.
\end{lem}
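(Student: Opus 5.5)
The plan is to reduce the statement to the corresponding fact about open sets and local trivialisations, using that $\pi\colon E\to M$ is a locally trivial surjective submersion admitting the zero section $0_E\colon M\to E$. Two standard topological facts will be used throughout. First, $M$ is homeomorphic to the closed subset $0_E(M)\subseteq E$. Second, $E$ is locally homeomorphic to products $U\times\E$.

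\emph{Direction $E\Rightarrow M$.} We treat the $C^\infty$-case, since the $C^0$-case is identical with ``continuous'' in place of ``smooth''. Let $(U_i)_{i\in I}$ be an open cover of $M$. Then $(\pi^{-1}(U_i))_i$ is an open cover of $E$, so it admits a subordinate smooth partition of unity $(\chi_i)$. Set $\psi_i:=\chi_i\circ 0_E\in C^\infty(M)$. These functions sum to $1$, and they are subordinate to $(U_i)$ because $\operatorname{supp}\psi_i\subseteq 0_E^{-1}(\operatorname{supp}\chi_i)\subseteq U_i$. Local finiteness transfers because $0_E$ is continuous: a neighbourhood of $0_E(x)$ meeting only finitely many supports pulls back to a neighbourhood of $x$ with the same property. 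This gives the partition of unity on $M$.

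\emph{Direction $M\Rightarrow E$, topological case.} Paracompact Hausdorff Banach manifolds are metrisable, by local metrisability together with the Smirnov metrisation theorem. The same theorem reduces the claim for $E$ to showing that $E$ is paracompact. Pick a locally finite cover $(U_i)$ of $M$ by trivialising open sets, together with a subordinate partition of unity $(\lambda_i)$. Fix a Banach norm on $\E$ and transport it via the trivialisations $E_{U_i}\cong U_i\times\E$; this gives continuous Finsler-type norms $\|\cdot\|_i$ on $E_{U_i}$. The function $\|e\|:=\sum_i\lambda_i(\pi(e))\|e\|_i$ is then a continuous fibre norm on $E$, equivalent on each $E_{U_i}$ to a product norm. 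From it, together with a metric $d$ on $M$, I would build the candidate metric
\[ D(e,e'):=d(\pi e,\pi e')+\min\{1,\;\text{local comparison of fibre parts}\}. \]
A cleaner route is to avoid writing a metric at all. A space that is locally homeomorphic to metrisable spaces and admits a locally finite cover by closed metrisable sets is metrisable, by \cite[Theorem 4.4.19]{Eng89}, as in \Cref{lem:paracpt-nbhd_cmpt.set}. To apply this, choose a locally finite closed cover $(F_i)$ of $M$ with $F_i\subseteq U_i$, which exists because $M$ is paracompact and hence normal (shrinking lemma). Then $E_{F_i}\cong F_i\times\E$ is metrisable, and the family $(E_{F_i})$ is locally finite in $E$ since $\pi$ is continuous. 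Hence $E$ is metrisable, and therefore paracompact.

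\emph{Direction $M\Rightarrow E$, smooth case.} This is the main obstacle, since smooth paracompactness of $E$ requires smooth bump functions on the fibre $\E$. The plan is to derive these from $M$. The manifold $M$ is modelled on $\mathbb{M}$ and is smoothly regular, and as far as I can see there is no direct route from smooth bumps on $\mathbb{M}$ to smooth bumps on $\E$; in general the fibre type plays no role in $M$. I would therefore first try \cite[Theorem 16.10/16.15]{KrMi97} on the metrisable space $E$, which asks for $C^\infty$-regularity of $E$. In a trivialisation a bump at $(x,v)$ can be written as $\chi(x)\,h(v)$, which needs a smooth bump $h$ on $\E$. If this is not available, the statement must be read as assuming that $\E$ admits smooth bumps; alternatively, one can observe that the paper only uses the direction $E\Rightarrow M$ and the topological version. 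I expect this fibre issue to be the point where the argument either needs the extra hypothesis or is restricted to those cases.
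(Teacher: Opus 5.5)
Your argument for $E\Rightarrow M$ is essentially the paper's: pull back a partition of unity subordinate to $(\pi^{-1}(U_i))_i$ along the zero section, and transfer supports and local finiteness by continuity of $0_E$. The paper first passes to a locally finite refinement, which your direct use of the definition makes unnecessary. For the topological implication $M\Rightarrow E$ the paper only cites ``a variant of'' \cite[Proposition 29.7]{KrMi97}. You instead give a self-contained proof: the locally finite closed cover $(E_{F_i})_i$ with $E_{F_i}\cong F_i\times\E$ metrisable, then \cite[Theorem 4.4.19]{Eng89} as in \Cref{lem:paracpt-nbhd_cmpt.set}, then Stone's theorem. That route is correct and worth keeping. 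Two small fixes. The sketch of the metric $D$ is superfluous. Also, Smirnov's theorem reduces the claim to \emph{metrisability} of $E$; as written, your sentence ``reduces the claim to showing that $E$ is paracompact'' is vacuous.

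On the smooth implication $M\Rightarrow E$ you should not hedge: it is false as stated, for exactly the reason you identify. Suppose $E$ is $C^\infty$-paracompact, so it is $C^\infty$-regular. Take a trivialisation $\Psi\colon E_U\to U\times\E$ and a bump function at $0_x$ with carrier in $\Psi^{-1}(U\times B_1(0))$. Composing it with the smooth map $v\mapsto\Psi^{-1}(x,v)$ gives a smooth bump function on $\E$.

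For a counterexample take $M=\R$ and $E=\R\times\ell^1$. Here $M$ is smoothly paracompact. But $\ell^1$ is separable with non-separable dual, so it is not Asplund and carries no $C^1$ bump function. Hence $E$ is not even $C^\infty$-regular.

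So no variant of the cited result can give this direction without a hypothesis on the fibre. What is true, and what your proposal proves, is the topological equivalence together with the smooth implication $E\Rightarrow M$. The smooth converse needs an extra assumption on $\E$. Smooth bump functions on $\E$ are necessary. As you note, $C^\infty$-regularity of $E$ then upgrades to $C^\infty$-paracompactness via \cite[Theorem 16.10]{KrMi97} when $E$ is Lindel\"of. This holds, e.g., if $M$ is Lindel\"of and $\E$ is separable with a smooth bump function.

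The paper invokes this lemma only in the remark after \Cref{adapcon}, and only in its topological form, so nothing downstream is affected.
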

\begin{proof}
If $M$ is ($C^\infty$)-paracompact, the assertion for $E$ is (a variant of) \cite[Proposition 29.7]{KrMi97}. Conversely, if $E$ is ($C^\infty-$)paracompact, we observe that the zero section $0_M$ identifies $M$ with a closed submanifold of $E$, so $M$ is paracompact by \cite[Corollary 5.1.9]{Eng89}. So any open cover $\mathcal{O}$ of $M$ admits a locally finite open refinement $\mathcal{R}$. As $\pi^{-1}(\mathcal{R}):=\{\pi^{-1}(R) \colon R \in \mathcal{R}\}$ is a locally finite open cover of $E$, if $E$ is $C^\infty$-paracompact we can find a $C^\infty$-partition of unity $\chi_R, R\in \mathcal{R}$ subordinate to the open cover of $E$, see \cite[Proposition 16.2]{KrMi97}. Then the family $\eta_R := \chi_R \circ 0_M, R\in \mathcal{R}$ is a smooth partition of unity subordinate to $\mathcal{R}$ and $M$ is $C^\infty$-paracompact.
\end{proof}
\begin{rem}
For Banach spaces the existence of $C^k$-bump functions, $C^k$-regularity and $C^k$-paracompactness is closely tied to the existence of a $C^k$-norm. So this condition is somewhat restrictive concerning the choice of Banach spaces. See \cite[Section 16]{KrMi97} for known criteria and (counter-)examples.
\end{rem}
\newpage
\section{Proofs for \Cref{timedependenhomotopy}}\label{app:proof}
In this appendix we collect two involved proofs for arguments needed in the treatment of (time-dependent) section and the evolution equations.

\subsection*{Proof of \Cref{timedepend}}

Our aim is to extend a $C^k$-map $\gamma \colon [0,1] \times [0,1] \rightarrow M$ in one variable such that it is constant near the endpoints. This can be achieved essentially by an application of the Seeley-extension theorem.
Recall that we have assumed that 
$$\g(\varepsilon,0)=\g(0,0) \text{ and }\g(\varepsilon,1)=\g(0,1), \text{ for all }\varepsilon\in [0,1]$$
We pick two charts $(U_0,\phi_0)$ and $(U_1,\phi_1)$ of $M$ with conneted domains such that $\g(\varepsilon,0)=\g(0,0)\in U_0$ and $\g(\varepsilon,1)=\gamma(0,1)\in U_1$ for all $\varepsilon\in[0,1]$

Since, for all $\varepsilon\in [0,1]$, we have  $\g_\varepsilon(0)=\g_0(0)$, we choose $\t\in ]0,1]$ such that $\g(t,\varepsilon)$ belongs to $U_0$ for all $ (\varepsilon,t)\in [0,1] \times [0,\t]$. Define $\alpha \colon [0,1]\times [0,\t] \rightarrow \phi_0(U_0)\subseteq \mathbb{M}, \alpha:=\phi_0 \circ \gamma$ to obtain a $C^k$-function with values in the Banach space $\mathbb{M}$. We apply a suitable version of the Seeley-extension theorem. For this combine \cite[Application 1 and Remark 4]{Han23}, for every $k\in \N_0\cup \{\infty\}$ there is a continuous linear extension operator 
$$\text{Ext}^k \colon C^k([0,1]\times [0,\tau),\mathbb{M}) \rightarrow C^k([0,1]\times (-\infty, \tau),\mathbb{M}).$$
We obtain a $C^k$-extension $\tilde{\alpha} := \mathrm{Ext}^k (\alpha|_{[0,1]\times [0,\tau)})$ (the restriction here is purely cosmetic to directly apply the Seeley-result)
We exploit that $\alpha([0,1]\times [0,\tau])\subseteq \phi_0(U_0)$ is compact, so by continuity of $\tilde{\alpha}$ there is $\eta >0$ small enough such that $\tilde{\alpha}([0,1]\times [-\eta,\tau))\subseteq \phi_0(U_0)$.  
Pick a smooth $h\colon \R\to [0,1]$ such that $h(t)=0$ for $t\leq -\frac{\eta}{2}$ and $h(t)=1$ for $t\geq -\frac{\eta}{4}$. Then we set 
$$\hat{\gamma}_0 \colon [0,1]\times [-\eta,\tau) \rightarrow U_1, \quad \hat{\gamma}_1(\varepsilon,t)=\phi_0^{-1}\circ \tilde{\alpha}(\varepsilon, h(t)t).$$
It follows that $\hat{\gamma}_0$ is a $C^k$-map. which  
extends $\gamma|_{[0,1]\times [0,\tau)}$ to the set $[0,1]\times [-\eta,\t)$ and such that 
$$\hat{\gamma}_0 (\varepsilon, t)=\phi_0^{-1}\circ \tilde{\alpha}(\varepsilon,0)=\phi^{-1}_0\circ \alpha(\varepsilon,0)=\gamma(\varepsilon,0), \quad \forall (\varepsilon, t) \in [0,1]\times \left(-\eta,-\frac{\eta}{2}\right].$$
 In the same way, there exists $\t < \t'<1$ such that $\g ([0,1]\times [\t',1])\subseteq U_1$. We apply the same reasoning to $\beta :=\phi_1\circ (\g|_{[0,1]\times [\t',1]})$. Shrinking  $\eta>0$ if necessary, we may assume that there is also $C^k$-extension $\hat{\gamma}_1 \colon [0,1] \times (\t',1+\eta] \rightarrow U_1$ of $\gamma|_{[0,1]\times (\t',1]}$ and such that 
$\hat{\gamma}_1 (\varepsilon,t)=\gamma(\varepsilon ,1)$ for all $(\varepsilon,t)\in [0,1]\times [1+\frac{\eta}{2},1+\eta]$. Now we consider the $C^k$-map defined by:
\[\hat{\g} \colon [0,1] \times [-\eta,1+\eta] \rightarrow M,\quad \hat{\g}(\varepsilon,t):= \hat{\g}_\varepsilon(t):=\begin{cases} 
\hat{\gamma}_1 (\varepsilon,t) & t \in [-\eta,\t) \\ \g (\varepsilon,t) & t \in [\t,\t']\\ 
\hat{\gamma}_p(\varepsilon, t) & t \in (\t', 1+\eta]\end{cases}\]
which is the required extension.

\subsection*{Proof of \Cref{lem:right-transport}}
Note first that by \Cref{setup:flowsCrs} the evolutions $\mathfrak{F}^{t,r}_{\mathfrak{u}},\varphi_{t,r}$ of the $C^{k,\infty}$-families of sections $\mathfrak{u},\rho(\mathfrak{u})$ are $C^{k,\infty}$-maps (with respect to the splitting $([0,1]\times [0,1])\times U$) for some $k\geq 1$. 

1. Let $Y\colon I\times U\to A$ be a $C^{k,\infty}$-section and fix $s\in I$. Fix $x\in U$ and choose $t$ so small that the evolutions are defined. Since $Q_t(x):=(\mathfrak{F}_\mathfrak{u}^{t,s})^*Y_t(x)\in A_x$, \Cref{lem:pullback-differentiation} applies pointwise in the fixed Banach fibre $A_x$ and yields
\[ \partial_tQ_t(x) = (\mathfrak{F}_\mathfrak{u}^{t,s})^* \Bigl( \partial_tY_t-[Y_t,\mathfrak{u}_t]_A \Bigr)(x). \]
Suppose first that $Y_t$ satisfies 
$\partial_tY_t=\LB[Y_t,\mathfrak{u}_t]_A,  Y_s=w$. Then $\partial_tQ_t(x)=0$ and therefore $Q_t(x)=Q_s(x)=w(x)$ for all $t$. As this is true for all $x$ in the flow domain, we can apply the inverse of $(\mathfrak{F}_{\mathfrak{u}}^{t,s})^*$, i.e. $(\mathfrak{F}_{\mathfrak{u}}^{t,s})_*$ on their common domain to obtain $Y_t=(\mathfrak{F}_\mathfrak{u}^{t,s})_*w$. Conversely, define $Y_t:=(\mathfrak{F}_\mathfrak{u}^{t,s})_*w$, then this formula yields a time-dependent $C^{k,\infty}$-section $Y_t$ (regularity follows from the chain rules discussed above) using injectivity of $(\mathfrak{F}_\mathfrak{u}^{t,s})^*$ one immediately establishes that it is a solution to \eqref{wq:flow:differential}.

For uniqueness, if $Z_t$ is another solution of \[ \partial_tZ_t(x)=\LB[Z_t,\mathfrak{u}_t]_A(x), \qquad Z_s(x)=w(x), \] then \Cref{lem:pullback-differentiation} implies that $ (\mathfrak{F}_\mathfrak{u}^{t,s})^*Z_t$ is constant. Evaluating at $t=s$ gives \[(\mathfrak{F}_\mathfrak{u}^{t,s})^*Z_t=w, \Rightarrow Z_t=(\mathfrak{F}_\mathfrak{u}^{t,s})^*w. \] Therefore the solution is unique.

2. We establish the statement first under the additional assumption that the bundle $A_U$ admits a global bundle trivialization $A_U\cong U\times\mathbb{A}$ such that all relevant curves remain in $U$. Consider the map $K(t,r,y) := \mathfrak{F}^{t,r}_{\mathfrak{u},\varphi_{r,t}(y)} \left( f_r(\varphi_{r,t}(y)) \right)$. For fixed $t,y$, $K(t,r,y)\in A_y$ for every $r\in[0,t]$, since $\varphi_{t,r}(\varphi_{r,t}(y))=y$.
Now $f$ is $C^{k,\infty}$ and the same holds for the flows. As we only compose in the spatial variable (where the maps are smooth), the flow the chain rules from \Cref{Crs_chainrules} imply that $K \colon ([0,1]\times [0,1])\times U \supseteq D \rightarrow \mathbb{A}$ is $C^{k,\infty}$. Standard arguments for differentiating parameter-dependent Bochner integration, \cite[Theorem 25.21]{Sche97}, shows that the mixed differentiability class is preserved, i.e. $v_t(y) = \int_0^tK(t,r,y)\,dr$ defines a $C^{k,\infty}$-time-dependent local section on every common flow domain.

Fix $x\in U$, and put $\gamma_x(t):=\varphi_{t,0}(x)$. Then $\frac{d}{dt}\gamma_x(t) = r_{\gamma_x(t)} (\mathsf{u}_t(\gamma_x(t)))$ and by the cocycle property $\varphi_{r,t}(\gamma_x(t)) = \gamma_x(r)$ we can rewrite $v_t$ as 
\begin{align}\label{eq:vt_loc}
v_t(\gamma_x(t)) = \int_0^t \mathfrak{F}^{t,r}_{\mathfrak{u},\gamma_x(r)} \left( f_r(\gamma_x(r)) \right)\,dr \equiv  \int_0^tP_x(t,r)g_x(r)\,dr=:V_x(t).\end{align} 
where the identification is via the trivialisation and  $P_x(t,r)\in\mathcal L(\mathbb{A},\mathbb{A})$ is the local representative of $\mathfrak{F}^{t,r}_{\mathfrak{u},\gamma_x(r)}\colon A_{\gamma_x(r)} \rightarrow A_{\gamma_x(t)}$, and $g_x(r)\in\mathbb{A}$ of $f_r(\gamma_x(r))$. 
By \eqref{eq:LXu_local}, $P_x(t,r)$ satisfies 
\begin{align}\label{another:ODE}
\frac{\partial}{\partial t}P_x(t,r) = B_x(t)\circ P_x(t,r), \qquad P_x(r,r)=\operatorname{id}_{\mathbb A}
\end{align} 
where $B_x(t).\alpha = d_{\gamma_x(t)}\mathsf{u}_t \left( r_{\gamma_x(t)}(\alpha) \right) + C_{\gamma_x(t)} \left( \alpha,\mathsf{u}_t(\gamma_x(t)) \right)$.
By the definition of $B_x(t)$, the $C^{k,\infty}$-regularity of $\mathfrak{u}$, and \Cref{rem:Bastiani_vs_Frechet} show that $t\mapsto B_x(t)\in L(\mathbb{A},\mathbb{A})$ is a continuous curve. We use the standard Leibniz rule for parameter-dependent Bochner integrals (which follows from \cite[Theorem 25.15 and Theorem 25.21]{Sche97}) together with \eqref{another:ODE}:
\begin{align*} 
\frac{d}{dt} V_x(t) &= g_x(t) + \int_0^t B_x(t).P_x(t,r)g_x(r)\,dr 
=g_x(t) + B_x(t).V_x(t).
\end{align*}
where the last equality exploits that $B_x(t)$ for fixed $t$ is bounded and linear, whence it commutes with the Bochner integral. Evaluating $B_x(t)$ on $V_x$, the last equation together with the definitions lead to:
$$\frac{d}{dt}V_x(t)=f_t(\gamma_x(t)) +  d_{\gamma_x(t)}\mathsf{u}_t \left( r_{\gamma_x(t)} (v_t(\gamma_x(t))) \right)+ C_{\gamma_x(t)} \left( v_t(\gamma_x(t)), \mathsf{u}_t(\gamma_x(t)) \right)$$
Since $v$ is a $C^{1,\infty}$-section, the chain rule applied to \eqref{eq:vt_loc} yields
$\frac{d}{dt}v_t(\gamma_x(t))= \left(\partial_tv_t+dv_t(r(\mathsf{u}_t)) \right)(\gamma_x(t)).$ 
Inserting both identities in \eqref{eq:vt_loc} and solving for $\frac{\partial v_t}{\partial t}$ we obtain along $\gamma_x(t)$
\begin{align*}
\frac{\partial v_t}{\partial t} = f_t + d\mathsf{u}_t(r(v_t)) - dv_t(r(\mathsf{u}_t)) + C(v_t,\mathsf{u}_t) \stackrel{\eqref{loctrivrbracket}}{=} f_t+ \LB[v_t,\mathsf{u}_t]_A. \end{align*}
As the initial value $v_0=0$ is immediate, we see that $v$ solves \eqref{eq:TD_pushforward}.

Uniqueness follows as in the proof of 1.: Let $z_t$ be another $C^{1,\infty}$-solution of \eqref{eq:TD_pushforward}. Fix $x\in U$ and define $ Z_x(t):=z_t(\gamma_x(t))$ identified as a curve in $\mathbb{A}$. Using the chain rule and \eqref{eq:TD_pushforward} for $z_t$, we see that  
$\frac{d}{dt} Z_x(t) = g_x(t)+B_x(t).Z_x(t), \ Z_x(0)=0$ 
Thus $Z_x$ and $V_x$ solve the same initial value problem in $\mathbb{A}$. Uniqueness for linear ODEs on Banach spaces gives $Z_x(t)=V_x(t).$ We obtain $z_t(\gamma_x(t)) = v_t(\gamma_x(t))$. As $\varphi_{t,s}$ is locally invertible, every point in the relevant time-$t$ flow domain is of the form $\gamma_x(t)=\varphi_{t,0}(x)$, whence $z_t=v_t$. This establishes the desired properties in the case the bundle $A_U$ is trivial.

For the general case, i.e. that there is not a global trivialisation, we subdivide the compact time intervals into finitely many subintervals each of which is contained in a trivialization. The local equations and uniqueness will then patch across the subdivision points. This is due to the fact that the differential equation are given by right-hand sides which are intrinsic (to see this note that the operators are local representatives of the fourth component of $\mathcal{X}_{\mathfrak{u}_t}$ (cf. \eqref{eq:LXu_local}). Hence all assertions are local on the base and generalise to the case where we need partitions of time intervals and several trivialisations.

\section{Dual linear fields and Poisson structures}\label{app:dualfield}
The material in this appendix is supplementary and is not used in the proofs of the main results. We mention that there is a dual theory of (local) linear vector fields on the dual bundle $A^\ast$. The exposition here follows \cite{Mac05} but needs the notion of a partial almost Poisson structure from \cite{CaPe12,CaPe20}. Also in the Banach setting a version of the classical result about equivalence between Poisson structure on $E^*$ and structure of Lie algebroid on $E$ (see for example \cite{Mar02} in finite dimension) holds, we refer the reader to \cite[Theorem 4.8]{CaPe12}.

\begin{defi}
For a weak  Banach  subbundle $p^{\f}_M\colon T^{\flat}M\to M$ of $p_M\colon T^*M\to M$, a bundle morphism $P\colon T^{\flat}M\to TM$ will be called a \emph{partial almost Poisson anchor} if $P$ is skew-symmetric relatively to the duality pairing.
\end{defi}

Let $P\colon T^{\flat}M\to TM$ be a partial almost Poisson anchor and $\iota\colon  T^{\flat}M\to T^*M$ the canonical inclusion. For any open set $U$ in $M$, let $\mathcal{F}^{\f}(U)$ be the set of smooth functions $f\colon U\rightarrow\R$ such that $df$ is a section of
$\pi^{\f}\colon T^{\f}M_{U}\rightarrow U$. It is clear that $\mathcal{F}^\flat
(U)$ is a sub-algebra of the algebra $\mathcal{F}(U)$ of
smooth functions on $U$. 
Then
\begin{equation}\label{dualbrac}
\{f,g\}_{P}=-\langle df,P(dg)\rangle 
\end{equation}
 defines a skew-symmetric bilinear map.

\begin{defi}
Let $P\colon T^\f M\to TM$ be a partial almost Poisson anchor.
\begin{enumerate}
\item The bilinear map $\{\cdot,\cdot\}_P$ is called the \emph{almost Poisson bracket} associated to $P$ and $(M,\{\;,\;\}_{P})$ is called an \emph{almost  partial Poisson structure} on $M$
 \item A partial almost Poisson structure $(M,\{\;,\;\}_{P})$ is called a \emph{partial Poisson manifold} if the bracket $\{\;,\;\}_{P}$ satisfies the Jacobi identity 
In this case $P$ is called a \emph{Poisson anchor}.
\end{enumerate}
\end{defi}

Using the dualities and Poisson structures one can define (local) linear vector fields on $A_{U}^*$. For any open set $U$ in $M$ and  any $f\in \mathcal{F}^\f(A^*_{U})$ the vector field  $P({df})$ on $A^*_{U}$ is well defined. Therefore for any $\mathfrak{u}\in \G(A_{U})$ we can associate the vector field $\Xi_\mathfrak{u}=P(d\Phi_\mathfrak{u})$ on $A^*_{U}$ such that $T\pi_{*}(\Xi_\mathfrak{u})=\rho(\mathfrak{u})$. As  $\mathcal{F}_L(A^*_{U})$ is contained in $\mathcal{F}^\f(A^*_{U})$ ,  for any $f,f'\in\mathcal{F}_L(A^*_{U})$ the almost Lie bracket $\{f,f'\}_P$ is also a linear function.  Since $P(df)(f')=\{f,f'\}_P$, it follows that $\Xi_\mathfrak{u}(\mathcal{F}_L(A^*_{U})\subset \mathcal{F}_L(A^*_{U})$. Now from \cite[Theorem 4.8]{CaPe12} we also have $\Xi_\mathfrak{u}(\mathcal{F}_{\pi_{*}}(A_{U}))\subset \mathcal{F}_{\pi_{*}}(A_{U})$. 

\begin{lem}\label{lineXi}
Let  $(A,M, \rho,\LB _A)$ be  a Banach  almost Lie algebroid.
For any local section $\mathfrak{u}\in \G(A_{U})$, the pair $(\Xi_\mathfrak{u}, \rho(\mathfrak{u}))$ is a linear vector field on $A^*$. Moreover, for any $\mathfrak{v}\in  \G(A_{U})$, we have 
\begin{eqnarray}\label{Xivalue}
\Xi_\mathfrak{u}(\Phi_{\mathfrak{v}})=\Phi_{[\mathfrak{u},\mathfrak{v}]}.
\end{eqnarray}
\end{lem}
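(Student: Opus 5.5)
The plan is to verify the three requirements directly: that $d\Phi_\mathfrak{u}$ lies in $T^\flat A^*$, that $\Xi_\mathfrak{u}$ projects onto $\rho(\mathfrak{u})$, and that \eqref{Xivalue} holds. Linearity will then follow from \Cref{chalinX}. The linear partial almost Poisson structure on $A^*$ is the one attached to $(A,\rho,\LB_A)$ in \cite[Theorem 4.8]{CaPe12}. On a trivialisation $A_U\equiv U\times\A$ I would therefore first write down its local anchor explicitly. At a point $(x,\xi)\in U\times\A^*$, a covector of the form $(\omega,w)\in \M^*\times\A$ is admissible in $T^\flat_{(x,\xi)}A^*$; here the second slot is taken in $\A\subset\A^{**}$. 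Its image should be
\[
P_{(x,\xi)}(\omega,w)=\Bigl(r_x(w),\; -r_x^{T}\omega-\xi\circ \mathsf{C}_x(w,\cdot)\Bigr)\in \M\times\A^*,
\]
up to the sign dictated by $\{f,g\}_P=-\langle df,P(dg)\rangle$. The first check is skew-symmetry of this formula with respect to the duality pairing. The key observation is that $d_{(x,\xi)}\Phi_\mathfrak{u}=(\langle\xi,d_x\mathsf{u}(\cdot)\rangle,\mathsf{u}(x))$ has its fibre component in $\A$ rather than in $\A^{**}$. Hence $\Phi_\mathfrak{u}\in\mathcal{F}^\flat(A^*_U)$, and likewise $d(f\circ\pi_*)=(df,0)$ is admissible. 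So $\Xi_\mathfrak{u}=P(d\Phi_\mathfrak{u})$ is a well-defined smooth vector field.

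Next I would establish the projection property. From the local formula, $T\pi_*\,\Xi_\mathfrak{u}(x,\xi)=r_x(\mathsf{u}(x))$, which is $\rho(\mathfrak{u})\circ\pi_*$. Equivalently, $\Xi_\mathfrak{u}(f\circ\pi_*)=\rho(\mathfrak{u})(f)\circ\pi_*$, so $\mathcal{F}_{\pi_*}$ is preserved.

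For \eqref{Xivalue}, I would compute
\[
\Xi_\mathfrak{u}(\Phi_\mathfrak{v})=\langle d\Phi_\mathfrak{v},P(d\Phi_\mathfrak{u})\rangle=-\{\Phi_\mathfrak{v},\Phi_\mathfrak{u}\}_P=\{\Phi_\mathfrak{u},\Phi_\mathfrak{v}\}_P .
\]
It then remains to show $\{\Phi_\mathfrak{u},\Phi_\mathfrak{v}\}_P=\Phi_{\LB[\mathfrak{u},\mathfrak{v}]_A}$. I would get this either from \cite[Theorem 4.8]{CaPe12} or by inserting the local formulas and comparing with \eqref{loctrivrbracket}. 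In the local computation, the terms $\langle\xi,d_x\mathsf{v}(r_x\mathsf{u})\rangle$ and $-\langle\xi,d_x\mathsf{u}(r_x\mathsf{v})\rangle$ come from the $r^T$ part of $P$, and the $\mathsf{C}_x$ term comes from the bilinear part. This mirrors the computation after \eqref{Lsigma}. The resulting identity is intrinsic, so it is independent of the trivialisation.

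Finally, the linear functions $\mathcal{F}_L(A^*_U)$ are by definition the $\Phi_\mathfrak{v}$. The previous step therefore gives $\Xi_\mathfrak{u}(\mathcal{F}_L(A^*_U))\subset\mathcal{F}_L(A^*_U)$, and together with the projection property this is condition (ii) of \Cref{chalinX} for the bundle $E=A^*$. That proposition then yields a base field $X$ with $(\Xi_\mathfrak{u},X)$ linear. By the projection computation, $X=T\pi_*\circ\Xi_\mathfrak{u}\circ 0=\rho(\mathfrak{u})$.

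The step I expect to be the main obstacle is the functional-analytic bookkeeping in the dual setting. One must be sure that $T^\flat A^*$ is a genuine weak subbundle containing all $d\Phi_\mathfrak{u}$ and all $d(f\circ\pi_*)$, and the signs of $P$ must match the convention \eqref{dualbrac}. In addition, the separation argument inside the proof of \Cref{chalinX} uses that fibrewise linear functions $\Phi^\sigma$ separate points. On $A^*$ one only has the functions $\Phi_\mathfrak{v}$, which evaluate against $\A\subset\A^{**}$, but these still separate points of $\A^*$, so I would check that the argument goes through verbatim.
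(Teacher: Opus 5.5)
Your plan is essentially the paper's argument: verify condition (ii) of \Cref{chalinX} for $E=A^*$ and obtain $\Xi_\mathfrak{u}(\Phi_\mathfrak{v})=\{\Phi_\mathfrak{u},\Phi_\mathfrak{v}\}_P=\Phi_{\LB[\mathfrak{u},\mathfrak{v}]_A}$ from \cite[Theorem 4.8]{CaPe12}. The one difference is that you write the linear partial Poisson anchor explicitly in a trivialisation, which also justifies $d\Phi_\mathfrak{u}\in T^\flat A^*$ and the $\mathcal{F}_{\pi_*}$-invariance that the paper simply takes from the citation. If you carry out that local computation, the bilinear term must be $+\xi\circ\mathsf{C}_x(w,\cdot)$, not $-\xi\circ\mathsf{C}_x(w,\cdot)$. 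With your minus sign you get $\langle d\Phi_\mathfrak{v},P(d\Phi_\mathfrak{u})\rangle=\langle\xi,d_x\mathsf{v}(r_x\mathsf{u})-d_x\mathsf{u}(r_x\mathsf{v})-\mathsf{C}_x(\mathsf{u},\mathsf{v})\rangle$. The correct sign gives $\Xi_\mathfrak{u}(x,\xi)=\bigl(r_x(\mathsf{u}(x)),-B_\mathsf{u}(x)^T\xi\bigr)$, and hence \eqref{Xivalue} via \eqref{loctrivrbracket}.
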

\begin{proof} That $(\Xi_\mathfrak{u}, \rho(\mathfrak{u}))$ is a linear vector field on $A^*$ follows from \Cref{chalinX}.
It remains to prove \eqref{Xivalue}. For almost partial Hamiltonian fields, relations \eqref{dualbrac} and \cite[Theorem 4.8]{CaPe12} yields:
$\Xi_\mathfrak{u}(\Phi_{\mathfrak{v}})=\{\Phi_\mathfrak{u},\Phi_\mathfrak{v} \}_{P}= \Phi_{[\mathfrak{u},\mathfrak{v}]}.$
\end{proof}
\begin{rem}[Dualities and dual pairings]\label{remark:duality}
Assume that $(A,M,\rho,\LB _A)$ is a Banach almost Lie algebroid with $P\colon T^\f A^*\to TA^*$ a linear partial almost Poisson anchor and $\{\cdot,\cdot\}_P$ almost Poisson bracket on $\mathcal{F}^\f(E^*_{U})$ for any open set $U$.  The smooth dual (with respect to $P$) admits a convenient generating set:
\begin{enumerate}
\item[1.]  For $\mathfrak{v}\in A$,  $T^\f_{\mathfrak{v} }A^\ast$ is generated by 
\[\{d\Phi^\s(\mathfrak{v}), \s \in \G(A^*_{U})\} \cup \{d(f\circ \pi)(\mathfrak{v}), f \in \mathcal{F}(U)\}.\]
\end{enumerate}
As $T\pi\colon TA\to TM$ and  $T\pi_*\colon TA^*\to TM$ are vector bundles we construct a dual pairing between $TA$ and $T^\flat A^*$ as bundles over $TM$ (similar to the finite dimensional setting \cite{Mac05}): Denote by $T_{(x,y)}A$ and $T_{(x,y)}A^*$ the fiber $(T\pi)^{-1}(x,y)$ and $(T\pi_*)^{-1}(x,y)$ for  $(x,y)\in TM$. 
If $\mathcal{X}=(x,y, u,v)\in T_{(x,y)}A$ and $\Xi=(x,y,\xi,\eta)\in T_{(x,y)}A^*$ let $c\colon J\rightarrow A$ and $c_*\colon J \rightarrow A^\ast$ be  $C^1$ curves defined on some interval $J$ with $\pi\circ c=\pi_*\circ  c_*$ and $\dot{c}(0)=\mathcal{X}, \dot{c}_*(0)=\Xi$, we set:
\begin{align}\label{Dualpairing}
\DP[\Xi,\mathcal{X}]=\left.\dis\frac{d}{dt}\right|_{t=0}\langle c_*(t),c(t)\rangle
\end{align}
Using arguments as in \cite[Proofs of Proposition 3.4.6 and 3.4.7]{Mac05} the following is true for Banach manifolds:
\begin{enumerate}
\item[2.] The pairing \eqref{Dualpairing} is non degenerate and identifies $TA_U$ with a canonically paired (weak Banach) subbundle $T^\flat A_U^\ast\subseteq TA_U^\ast$. For linear vector fields $(\mathcal{X}, X)$ and $(\Xi, X)$ on $A_{ U}$ and $A^*_{U}$ and  $p_A(\mathcal{X})=\mathfrak{v}$ and $p_{A^*}(\Xi)=\s$,
$$\DP[\Xi,\mathcal{X}]=\Xi(\Phi_\mathfrak{v})+\mathcal{X}(\Phi^\s)- d(\langle\s, \mathfrak{v}\rangle)(X).$$
By \eqref{Dualpairing} linear vector fields on $A$ and $A^\ast$ are in correspondence.
\end{enumerate}
\end{rem}

We prove the following dual statement to \Cref{thm:complete-lift}:

\begin{prop} Let $(A,\pi,M,\rho,\LB_A)$ be a Banach almost Lie algebroid and $\mathfrak{u}\in\Gamma(A_U)$ a local section.  Let $(\Xi_\mathfrak{u}:=P(d\Phi_\mathfrak{u}),\rho(\mathfrak{u}))$ be the linear vector field on $A^*$ associated to $\mathfrak{u}$. Its local flow is $\mathfrak{F}^t_{\Xi_\mathfrak{u},x} = \left((\mathfrak{F}^t_{\mathfrak{u},x})^T\right)^{-1}, x\in U$.
\end{prop}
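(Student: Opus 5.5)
The plan is to work locally in a bundle trivialisation $A_V\cong V\times\mathbb{A}$ around $x$ and compare the two flows through their local representatives. The approach rests on the defining duality: $(\Xi_\mathfrak{u},\rho(\mathfrak{u}))$ is linear by \Cref{lineXi}. By \Cref{chalinX} its flow is therefore a fibrewise linear isomorphism $A^*_x\to A^*_{\varphi_t(x)}$ covering $\varphi_t=\Fl^t_{\rho(\mathfrak{u})}$, with the same base flow as $\mathfrak{F}^t_\mathfrak{u}$. The candidate $\bigl((\mathfrak{F}^t_{\mathfrak{u},x})^T\bigr)^{-1}\colon A_x^*\to A^*_{\varphi_t(x)}$ also covers $\varphi_t$. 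It thus suffices to show that the fibre parts solve the same linear operator ODE with the same initial value, and then invoke uniqueness.

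\textbf{Step 1.} Compute the fibre operator of $\Xi_\mathfrak{u}$. In the trivialisation, write $\Xi_\mathfrak{u}(x,\xi)=(x,\xi,r_x(\mathsf{u}(x)),D(x).\xi)$ with $D(x)\in L(\mathbb{A}^*,\mathbb{A}^*)$. Evaluate \eqref{Xivalue} at $(x,\xi)$ for an arbitrary local section $\mathsf{v}$, using $\Phi_\mathfrak{v}(x,\xi)=\langle\xi,\mathsf{v}(x)\rangle$:
\[
\langle \xi, d_x\mathsf{v}(r_x(\mathsf{u}(x)))\rangle+\langle D(x).\xi,\mathsf{v}(x)\rangle=\langle\xi,\LB[\mathfrak{u},\mathfrak{v}]_A(x)\rangle .
\]
Inserting \eqref{loctrivrbracket}, the $d\mathsf{v}$ terms cancel. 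This gives $\langle D(x).\xi,\alpha\rangle=-\langle \xi, d_x\mathsf{u}(r_x(\alpha))+C_x(\alpha,\mathsf{u}(x))\rangle$ for all $\alpha=\mathsf{v}(x)$, where we choose $\mathsf{v}$ constant. Hence $D(x)=-B_\mathsf{u}(x)^T$, where $B_\mathsf{u}$ is the operator from the proof of \Cref{thm:complete-lift}.

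\textbf{Step 2.} Set up the ODE. As in that proof, the representative $Y_x(t)$ of $\mathfrak{F}^t_{\mathfrak{u},x}$ satisfies \eqref{eq:Yx_fibre} in operator norm. Let $Z_x(t):=(Y_x(t)^T)^{-1}$. Transposition is an isometric linear map and inversion is smooth on invertible operators, so $Z_x$ is operator-norm differentiable. Differentiating $Z_x(t)\circ Y_x(t)^T=\mathrm{id}$ gives
\[
\dot Z_x(t)=-Z_x(t)\,\dot Y_x(t)^T Z_x(t)=-Z_x(t)\,Y_x(t)^TB_\mathsf{u}(\varphi_t(x))^TZ_x(t)=-B_\mathsf{u}(\varphi_t(x))^T Z_x(t),
\]
with $Z_x(0)=\mathrm{id}$. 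This is exactly the fibre equation of the integral curves of $\Xi_\mathfrak{u}$ over $\varphi_t(x)$, the one from the proof of \Cref{chalinX}.

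\textbf{Step 3.} Conclude by uniqueness. The flow of $\Xi_\mathfrak{u}$ has representative $P^t(x)$ satisfying the same equation, and this is a linear ODE in the Banach space $L(\mathbb{A}^*,\mathbb{A}^*)$ with continuous coefficients. Uniqueness of linear ODEs gives $P^t(x)=Z_x(t)$ while $\varphi_t(x)$ stays in the chart. For longer times, the cocycle property of both flows patches finitely many trivialisations together. Independence of the trivialisation is automatic because both sides are intrinsic.

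The main obstacle is Step 1. It relies on \eqref{Xivalue}, which rests on \cite[Theorem 4.8]{CaPe12} and on $\Phi_\mathfrak{v}$ lying in $\mathcal{F}^\flat$. It also requires that the functionals $\xi\mapsto\langle\xi,\alpha\rangle$, $\alpha\in\mathbb{A}$, determine $D(x).\xi\in\mathbb{A}^*$. They do, since an element of $\mathbb{A}^*$ is determined by its values on $\mathbb{A}$. Because $\mathbb{A}^*$ may be nonreflexive, I would keep everything phrased via this evaluation pairing rather than via biduals.
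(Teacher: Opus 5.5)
Your proof is correct, but it takes a different route from the paper.

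The paper argues intrinsically with the tangent pairing $\DP$ of \Cref{remark:duality}. Using \eqref{Xivalue}, \eqref{Xs} and \eqref{Lsigma}, it shows that $\DP[\Xi_\mathfrak{u}(\xi),\mathcal{X}_\mathfrak{u}(a)]=0$ for all $\xi\in A^*_x$ and $a\in A_x$. Hence $t\mapsto\langle\mathfrak{F}^t_{\Xi_\mathfrak{u},x}(\xi),\mathfrak{F}^t_{\mathfrak{u},x}(a)\rangle$ is constant along the flows, which gives $(\mathfrak{F}^t_{\mathfrak{u},x})^T\circ\mathfrak{F}^t_{\Xi_\mathfrak{u},x}=\mathrm{id}$ directly. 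This is a conserved-quantity argument with no ODE uniqueness and no trivialisations.

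You instead compute the fibre generator $D(x)=-B_\mathsf{u}(x)^T$ of $\Xi_\mathfrak{u}$ from \eqref{Xivalue} in a chart. You then check that $(Y_x(t)^T)^{-1}$ solves the same linear operator equation and conclude by uniqueness, patching charts via the cocycle property.

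The two arguments are closely related. In a trivialisation, along the flows one has $\frac{d}{dt}\langle\xi(t),\alpha(t)\rangle=\langle (D+B_\mathsf{u}^T)\xi(t),\alpha(t)\rangle$. So your identity $D=-B_\mathsf{u}^T$ is exactly the local form of the vanishing of the tangent pairing.

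What your version buys is self-containedness in the Banach setting. It uses only \eqref{Xivalue}, the local normal form of linear vector fields from \Cref{chalinX}, and \eqref{eq:Yx_fibre}. It avoids the tangent-pairing formula of \Cref{remark:duality}, which the paper justifies only by analogy with Mackenzie's finite-dimensional arguments. Both proofs rest equally on \eqref{Xivalue}, as you note.

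The paper's version is coordinate-free and shorter. Since it differentiates a scalar function, it needs neither operator-norm differentiability of $Z_x$ nor the chart-patching step.
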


\begin{proof}
By \Cref{lineXi}, the pair $(\Xi_\mathfrak{u},\rho(\mathfrak{u})),\text{ with } \Xi_u:=P(d\Phi^\mathfrak{u})$, is a linear vector field on $A^*_U$. Its local flow will be denoted by $\mathfrak{F}^t_{\Xi_\mathfrak{u}}$. Restricting the flow to a fibre $A_x^*$ we obtain a map $ \mathfrak{F}^{t,x}_{\Xi_\mathfrak{u}}\colon A_x^*\to A_{\varphi_t(x)}^*$, where $\varphi_t$ is the flow of $\rho(\mathfrak{u})$, following the notation from the proof of 2. in \Cref{thm:complete-lift}.

Fix $x\in U$, $a\in A_x$, and $\xi\in A_x^*$. Choose local sections $\mathfrak{v}\in\Gamma(A_V),  \sigma\in\Gamma(A^*_V)$ defined on an $x$-neighbourhood $V$, such that $\mathfrak{v}(x)=a,  \sigma(x)=\xi$. By the formula \eqref{Dualpairing} for the tangent pairing of linear vector fields from \Cref{remark:duality}, we have 
\begin{align}\label{eq:dualpairingforXLU}
\DP[\Xi_\mathfrak{u}(\xi),\LXu(a)] = \Xi_\mathfrak{u}(\Phi_\mathfrak{v})(\xi) + \LXu(\Phi^\sigma)(a)- d_x\langle\sigma,\mathfrak{v}\rangle\bigl(\rho(\mathfrak{u})(x)\bigr).
\end{align}
\Cref{lineXi} gives $\Xi_\mathfrak{u}(\Phi_\mathfrak{v})=\Phi_{\LB[\mathfrak{u},\mathfrak{v}]_A}$, and by \eqref{Xs} $\LXu(\Phi^\sigma)=\Phi^{L_\mathfrak{u}\sigma}$.
Evaluating at $x$, we insert these expressions in \eqref{eq:dualpairingforXLU} and use \eqref{Lsigma}. Then 
\begin{align*}  &\DP[\Xi_\mathfrak{u}(\xi),\LXu(a)] \\ =& \langle\xi,\LB[\mathfrak{u},\mathfrak{v}]_A(x)\rangle+d_x\langle\sigma,\mathfrak{v}\rangle(\rho(\mathfrak{u})(x)) - \langle\xi,\LB[\mathfrak{u},\mathfrak{v}]_A(x)\rangle- d_x\langle\sigma,\mathfrak{v}\rangle(\rho(\mathfrak{u})(x)) =0.
\end{align*}
Thus $\LXu$ and $\Xi_\mathfrak{u}$ are dual linear vector fields in the sense of \Cref{remark:duality}. 
Now for any pair of elements $(\mathfrak{v},\s)$ such that $x=\pi(\mathfrak{v})=\pi_*(\s)$ the flows  $\mathfrak{F}_{\Xi_\mathfrak{u}}^t(\s)$  and  $\mathfrak{F}_{\mathfrak{u}}^t(\mathfrak{v})$ are defined if and only if the flow $\varphi_t(x)$ is defined and so we have 
\begin{align*}
\langle\mathfrak{F}_{\Xi_\mathfrak{u}}^{t}(\s), \mathfrak{F}_{\mathfrak{u}}^{t}(\mathfrak{v})\rangle =\langle (\mathfrak{F}_{\mathfrak{u}}^{t})^T\circ  \mathfrak{F}_{\Xi_\mathfrak{u}}^{t}(\s), \mathfrak{v}\rangle \end{align*}
By the definition, $\frac{d}{dt}\langle \mathfrak{F}_{\Xi_\mathfrak{u},x}^t(\xi),\mathfrak{F}^t_{\mathfrak{u},x}(a)\rangle = \DP[\Xi_\mathfrak{u}(\mathfrak{F}_{\Xi_\mathfrak{u},x}^t(\xi)),\LXu(,\mathfrak{F}^t_{\mathfrak{u},x}(a))] = 0$. Hence $\left\langle (\mathfrak{F}_{\mathfrak{u},x}^t)^T \mathfrak{F}^t_{\Xi_\mathfrak{u},x}(\xi), a \right\rangle = \langle\xi,a\rangle$ for every $a\in A_x$.  It follows that $(\mathfrak{F}_{\mathfrak{u},x}^{t})^T\circ  \mathfrak{F}_{\Xi_\mathfrak{u},x}^{t}=\mathrm{id}_{A^\ast_x}$ and so
$\mathfrak{F}_{\Xi_\mathfrak{u}}^{t}=((\mathfrak{F}_{\mathfrak{u}}^{t})^T)^{-1}.$
\end{proof}

\addcontentsline{toc}{section}{References}
\bibliography{Banach_algebroid}
\end{document}